\newtheorem{theorem}{Theorem}
\newtheorem{cor}[theorem]{Corollary}
\newtheorem{prop}[theorem]{Proposition}
\newtheorem{lemma}[theorem]{Lemma}
\theoremstyle{remark}
\newtheorem*{remark}{Remark}
\def\Grp#1{\left(#1\right)}
\def\Cbr#1{\left\{#1\right\}}
\def\Sbr#1{\left[#1\right]}
\def\Flr#1{\left\lfloor#1\right\rfloor}
\def\Abs#1{\left|#1\right|}
\def\cf#1{\mathbf{1}\!\Cbr{#1}}
\def\dd{\mathrm{d}}
\def\gv{\,|\,}
\def\mean{\mathbb{E}}
\def\prob{\mathbb{P}}
\def\pr#1{\prob\{#1\}}
\def\rx{\epsilon}
\def\LT#1{\Cal L[{#1}]}
\def\FT#1{\widehat{#1}}
\def\Re{\mathrm{Re}}
\def\Im{\mathrm{Im}}
\def\nth#1{\frac{1}{#1}}
\def\lfrac#1#2{#1/#2}  
\def\levy{L\'evy\xspace}
\def\Coms{\mathbb{C}}
\def\Ints{\mathbb{Z}}
\def\Nats{\mathbb{N}}
\def\Reals{\mathbb{R}}
\def\@cleandot{\@ifnextchar.{}{\@ifnextchar,{.}{\@ifnextchar;{.}{\@ifnextchar?{.}{\@ifnextchar:{.}{\@ifnextchar!{.}{\@ifnextchar'{.}{\@ifnextchar){.}{.\ }}}}}}}}}
\def\as{a.s\@cleandot}
\def\iid{i.i.d\@cleandot}
\def\pdf{p.d.f\@cleandot}
\def\rhs{r.h.s\@cleandot}
\def\wrt{w.r.t\@cleandot}
\def\rsup#1{\overline{#1}} % running supremum
\def\rinf#1{\underline{#1}} % running infimum
\def\dual#1{\widehat{#1}}
\def\levym{\Pi}  % Levy measure
\def\jp{\Delta}
\def\fpt{T} % First passage time
\def\fht{\tau} % First hitting time
\def\da{/\alpha}     % divided by alpha
\def\ra{{1\da}}  % reciprocal of alpha
\def\sumoi#1{\sum_{#1=1}^\infty}
\def\sumzi#1{\sum_{#1=0}^\infty}
\def\intii{\int_{-\infty}^\infty}
\def\intzi{\int_0^\infty}
\def\iunit{\mathrm{i}}
\def\Sp#1{\sp{(#1)}}
\def\toi{\to\infty}
\def\dto{\downarrow}
\def\uto{\uparrow}
\def\Cal#1{\mathcal{#1}}
\def\inum#1{#1_1, #1_2, \ldots}                 % \inum X
\def\Dunif{\mathrm{Uniform}}
\def\Dbeta{\mathrm{Beta}}
\setlist{itemsep=0ex}
\newlist{thmenum}{enumerate}{1}
\setlist[thmenum]{label={(\alph*)},leftmargin=5ex}
\begin{document}
\begin{center}
  \large
  \textbf{Law of the first passage triple of a spectrally positive
    strictly stable process
  }  
  \\[1.5ex]
  \normalsize
  Zhiyi Chi \\
  Department of Statistics\\
  University of Connecticut \\
  Storrs, CT 06269, USA, \\[.5ex]
  E-mail: zhiyi.chi@uconn.edu \\[1ex]
  \today
\end{center}

\begin{abstract}
  For a spectrally positive and strictly stable process with index in
  (1,\ 2), a series representation is obtained for the joint
  distribution of the ``first passage triple'' that consists of the
  time of first passage and the undershoot and the overshoot at first
  passage.  The result leads to several corollaries, including
  1) the joint law of the first passage triple and the pre-passage
  running supremum, and 2) at a fixed time point, the joint law of the
  process' value, running supremum, and the time of the running
  supremum.  The representation can be decomposed as a sum of strictly
  positive functions that allows exact sampling of the first passage
  triple.

  \medbreak
  \textit{Keywords and phrases.}  First passage; \levy process;
  stable; spectrally positive; Mittag-Leffler; running supremum; exact
  sampling

  \medbreak
  2000 Mathematics Subject Classifications: Primary 60G51; Secondary
  60E07.

  \medbreak
  \textbf{Acknowledgment.}  The research is partially supported by NSF
  Grant DMS 1720218.  The author would like to thank two referees for
  their careful reading of the manuscript and useful suggestions,  in
  particular, one that significantly simplifies the proof of the main
  theorem.
\end{abstract}

\section{Introduction}
Let $X = (X_t)_{t\ge 0}$ be a \levy process and $\levym(\dd x)$ its
\levy measure.  Denote by
\begin{align*}
  \jp_t = X_t - X_{t-}, \quad
  \rsup X_t = \sup_{0\le s\le t} X_s,
\end{align*}
the jump and running supremum of $X$ at $t$, respectively.  By
convention, $X_{0-} = X_0 = 0$.  For $c\ge 0$, the first passage time
of $X$ at level $c$ is defined as
\begin{align*}
  \fpt_c = \inf\{t>0: X_t>c\},
\end{align*}
while for $x\in \Reals$, the first hitting time of $X$ at $x$ is
defined as
\begin{align*}
    \fht_x = \inf\{t>0: X_t = x\},
\end{align*}
where by convention $\inf\emptyset = \infty$.

By definition, a \levy process is spectrally positive if it only has
positive jumps, i.e.\ its \levy measure is concentrated on $(0,
\infty)$.  It is well known that if $X$ is spectrally positive and is
not a subordinator, then $(\fht_{-x})_{x\ge 0}$ is a subordinator,
possibly killed at an exponential time and for $t$, $x>0$, $t \pr{
  \fht_{-x} \in\dd t}\,\dd x = x \pr{ X_t\in -\dd x}\,\dd t$, which is
known as Kendall's identity (\cite {bertoin:96:cup}, Chapter VII).  If
for each $t>0$, $X_t$ has a probability density function (\pdf)
$g_t(x)$, then for each $x>0$, $\fht_{-x}$ has a \pdf $f_{-x}(t)$ and
Kendall's identity can be written as
\begin{align}  \label{e:Kendall}
  t f_{-x}(t) = x g_t(-x), \quad x>0,\ t>0.
\end{align}
In this paper, a \pdf is always defined with respect to (\wrt) the
Lebesgue measure.

Let $X$ be a spectrally positive and strictly stable process with
index $\alpha\in (1,2)$.  The first passage of $X$ at a fixed level
$c>0$ is of particular interest and has already drawn a lot of
attention.  The joint distribution of $X_{\fpt_c-}$ and $\jp
_{\fpt_c}$ is known \cite{doney:06:aap} and so is the distribution of
$\fpt_c$ \cite{bernyk:08:ap, simon:10:expomath, simon:11:sto}.
Related to these random variables, the distribution of $\fht_x$ is
classical when $x<0$ \cite{bertoin:96:cup} and is also known when
$x>0$ \cite{peskir:08:ecp, simon:10:expomath}.  On the other hand, the
three random variables $\fpt_c$, $X_{\fpt_c-}$, and $\jp_{\fpt_c}$
completely describes what happens to $X$ at the moment of first
passage.  Although some general results are available \cite
{doney:06:aap}, explicit representations of the joint distribution of
the triple have been unknown.

While there may be many different representations, those that allow
exact sampling are practically more useful and perhaps conceptually
more satisfactory.  Ideally, a representation should also allow
efficient implementation of the sampling.  Although such
representations are available for the marginal distributions of $X_t$,
$\rsup X_t$,  $\fpt_c$, and $\fht_x$ \cite {zolotarev:66:stmsp,
  simon:10:expomath, simon:11:sto}, they seem much harder to get for
the joint distribution of $\fpt_c$, $X_{\fpt_c-}$, and $\jp_{\fpt_c}$,
so we will content ourselves with a representation that allows exact
sampling of the triple regardless of efficiency.

The following function will play an important role.  For $c>0$, $x\in
(-\infty, c)$, and $t>0$, define
\begin{align} \label{e:hc}
  h_c(x,t) = \frac{\pr{X_t\in\dd x, \rsup X_t\le c}}{\dd x}.
\end{align}
Since $X$ has the scaling property, i.e.\ $(X_{\lambda t})_{t\ge  0}
\sim  (\lambda^\ra X_t)_{t\ge 0}$ for all $\lambda >0$, one can assume
without loss of generality that 
\begin{align} \label{e:stable-normal}
  \mean (e^{-q X_t}) = \exp(t q^\alpha), \quad t>0,\ q\ge0.
\end{align}
Because also by scaling
\begin{align} \label{e:scaling}
  (\fpt_c, X_{\fpt_c-}, \jp_{\fpt_c})
  \sim
  (c^\alpha \fpt_1, c X_{\fpt_1-}, c\jp_{\fpt_1}),
\end{align}
it suffices to consider $c=1$.

\begin{theorem} \label{t:h}
  Suppose $X$ is a stable process with index $\alpha\in (1,2)$
  satisfying \eqref{e:stable-normal}.  Then the triple $(\fpt_1,
  X_{\fpt_1-}, \jp_{\fpt_1})$ has a \pdf that at each $(t, x, z)\in
  (0,\infty)\times \Reals \times [0,\infty)$ takes value
  \begin{align*}
    \varrho_1(t,x,z)=
    \frac{z^{-\alpha-1}}{\Gamma(-\alpha)}\cf{x<1<x+z}
    h_1(x,t),
  \end{align*}
  where for $x\in (-\infty, 1)$,
  \begin{align} \label{e:h-power}
    h_1(x,t)
    &=
      \nth{\pi}\sumoi{k,n}(-1)^{k+n}
      \frac{\Gamma(k\da + n)}{\Gamma(\alpha n) k!}
      \sin(\pi k\da) (1-x)^k t^{-k\da - n}.
  \end{align}
  The series in \eqref{e:h-power} converges absolutely for given
  $x$ and $t>0$.
\end{theorem}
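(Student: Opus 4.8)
I would prove the two assertions—the product form of $\varrho_1$, and the series \eqref{e:h-power} for $h_1$—in turn. For the product form: since $X$ is spectrally positive, strictly $\alpha$-stable with $\alpha\in(1,2)$, it has no Gaussian component and unbounded variation, hence cannot reach level $1$ continuously; so $\fpt_1$ is \as\ a jump time, and in fact the \emph{unique} jump time $s$ with $\rsup X_{s-}\le 1<X_{s-}+\jp_s$. Applying the compensation formula for the Poisson point process of jumps of $X$ to the predictable integrand $\cf{\rsup X_{s-}\le 1}\,\cf{X_{s-}\in\dd x}\,\cf{x+z>1}$ gives
\[
  \pr{\fpt_1\in\dd t,\ X_{\fpt_1-}\in\dd x,\ \jp_{\fpt_1}\in\dd z}
  =\pr{\rsup X_{t-}\le 1,\ X_{t-}\in\dd x}\,\levym(\dd z)\,\cf{x+z>1}\,\dd t .
\]
Because $X$ has no fixed discontinuities, $\pr{\rsup X_{t-}\le1,\,X_{t-}\in\dd x}=\pr{\rsup X_t\le1,\,X_t\in\dd x}=h_1(x,t)\,\dd x$ for $x<1$ by \eqref{e:hc} (in particular the triple has a \pdf), and the normalization \eqref{e:stable-normal} forces $\levym(\dd z)=\Gamma(-\alpha)^{-1}z^{-\alpha-1}\,\dd z$, since for $\alpha\in(1,2)$ one has $\int_0^\infty(e^{-qz}-1+qz)\,z^{-\alpha-1}\,\dd z=\Gamma(-\alpha)q^\alpha$. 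This is the stated expression for $\varrho_1$.

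\emph{A Laplace transform for $h_1$.} By the duality lemma $(X_s)_{0\le s\le t}\overset{d}{=}(X_t-X_{(t-s)-})_{0\le s\le t}$ and the absence of negative jumps, $(\rsup X_t,X_t)\overset{d}{=}(X_t-\rinf X_t,X_t)$, so $h_1(x,t)\,\dd x=\pr{\rinf X_t\ge x-1,\ X_t\in\dd x}=\pr{\fht_{x-1}>t,\ X_t\in\dd x}$ for $x<1$, the last step because a spectrally positive process descends continuously. Thus $h_1(x,\cdot)$ is a slice of the resolvent of $X$ killed on leaving $(x-1,\infty)$, a classical quantity for spectrally one-sided processes: in terms of the $q$-scale function $W^{(q)}(y)=y^{\alpha-1}E_{\alpha,\alpha}(qy^\alpha)$ for $y>0$ ($W^{(q)}(y)=0$ for $y\le0$; here $E_{\alpha,\alpha}(u)=\sum_{m\ge0}u^m/\Gamma(\alpha m+\alpha)$, read off from $\int_0^\infty e^{-sy}W^{(q)}(y)\,\dd y=(s^\alpha-q)^{-1}$) and the inverse $\Phi(q)=q^\ra$ of the Laplace exponent $\lambda^\alpha$,
\[
  \int_0^\infty e^{-qt}h_1(x,t)\,\dd t\;=\;e^{-(1-x)q^\ra}\,W^{(q)}(1)-W^{(q)}(x),\qquad x<1 .
\]
(Equivalently this follows from the renewal identity $h_1(x,t)=g_t(x)-\int_0^t f_{x-1}(s)\,g_{t-s}(1)\,\dd s$—strong Markov at $\fht_{x-1}$—together with $\mean e^{-q\fht_{x-1}}=e^{-(1-x)q^\ra}$ and the known resolvent density of $X$.)

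\emph{Inversion.} On the sector $\{|\arg q|<\pi\alpha/2\}$ each of $e^{-(1-x)q^\ra}W^{(q)}(1)$ and $W^{(q)}(x)$ grows like $\alpha^{-1}q^{\ra-1}e^{xq^\ra}$ (the classical asymptotics of $E_{\alpha,\alpha}$), so their \emph{difference}, the transform above, is $O(q^{-2})$ there. Hence the Bromwich integral for $h_1$ can be deformed onto a contour $\Gamma_\theta$ running to $\infty$ inside that sector ($\tfrac\pi2<\theta<\tfrac{\pi\alpha}2$); after the substitution $q=p/t$ the integrand is, by the everywhere-convergent expansions of $e^{-(1-x)(p/t)^\ra}$ and of $W^{(p/t)}(\cdot)$,
\[
  \sum_{k\ge0,\ m\ge1}\frac{(-(1-x))^k}{k!\,\Gamma(\alpha m)}\,(p/t)^{k\da+m-1}\;-\;\sum_{m\ge1}\frac{x^{\alpha m-1}}{\Gamma(\alpha m)}\,(p/t)^{m-1},
\]
and integrating term by term against $\tfrac1{2\pi\iunit}\int_{\Gamma_\theta}e^{p}(\cdot)\,\dd p$, with Hankel's formula $\tfrac1{2\pi\iunit}\int e^{p}p^{\beta}\,\dd p=1/\Gamma(-\beta)$—which vanishes when $\beta\in\{0,1,2,\dots\}$, so the $k=0$ terms and the whole $W^{(q)}(x)$-part drop—gives
\[
  h_1(x,t)=\sum_{k,m\ge1}\frac{(-(1-x))^k}{k!\,\Gamma(\alpha m)\,\Gamma(1-k\da-m)}\,t^{-k\da-m}.
\]
The reflection formula $\Gamma(1-k\da-m)^{-1}=\pi^{-1}(-1)^m\sin(\pi k\da)\,\Gamma(k\da+m)$ then converts this into \eqref{e:h-power}.

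\emph{Convergence, and the main obstacle.} Absolute convergence of \eqref{e:h-power} for fixed $x<1$, $t>0$, as well as the legitimacy of every interchange above (the contour deformations and the swap of summation with the contour integral), follow from Stirling's formula: $\Gamma(k\da+m)/\Gamma(\alpha m)\to0$ faster than geometrically in $m$, and $\Gamma(k\da+m)/k!\to0$ faster than geometrically in $k$. The conceptual ingredients—the compensation formula, the duality lemma, the killed-resolvent formula for spectrally one-sided processes, and Hankel's integral for $1/\Gamma$—are all standard. The one genuinely delicate point is the exact cancellation of the dominant exponentials at $q=\infty$ that makes $\widehat{h_1}(q)=O(q^{-2})$ on the sector: without it neither summand of $\widehat{h_1}(q)$ is individually invertible, and it is precisely this cancellation—together with the ensuing contour work and uniform estimates—that carries the proof.
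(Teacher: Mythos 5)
Your approach is correct, but it is genuinely different from the one in the paper. Both proofs start from the same Laplace transform identity
$\LT{h_1(x,\cdot)}(q)=e^{-(1-x)q^\ra}\sum_{n\ge1}q^{n-1}/\Gamma(\alpha n)-\sum_{n\ge1}q^{n-1}(x\vee0)^{\alpha n-1}/\Gamma(\alpha n)$
(this is the paper's Proposition~\ref{p:LT}; your scale-function form $e^{-(1-x)q^\ra}W^{(q)}(1)-W^{(q)}(x)$ is the same thing). Where you diverge is in the inversion. The paper defines $\omega_x(t)=\sum_{n\ge0}f^{(n)}_{-x}(t)/\Gamma(\alpha n+\alpha)$, shows by Fourier-inversion bounds on $f^{(n)}_{-x}$ that this series converges absolutely \emph{and} that summation commutes with the Laplace transform \emph{only when} $1-x>x_0=1/\cos(\pi/(2\alpha))$ (Lemma~\ref{l:omega-x}), identifies $h_1(x,\cdot)$ with $\omega_{1-x}$ in that restricted range by uniqueness of Laplace transforms, and then recovers the full range $x<1$ by analytic continuation in $x$ (Lemmas~\ref{l:analytic} and~\ref{l:continuity}). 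You instead invert directly by a Bromwich--Hankel contour argument, pivoting on the exact cancellation of the leading term $\alpha^{-1}q^{\ra-1}e^{xq^\ra}$ between $e^{-(1-x)q^\ra}W^{(q)}(1)$ and $W^{(q)}(x)$, which leaves $\widehat{h_1}(q)=O(q^{-2})$ on the sector $|\arg q|<\pi\alpha/2$ and so lets you push the contour into that sector, expand in a double power series, and integrate term by term with Hankel's formula and the reflection formula. That is a different mechanism for handling the same obstruction the paper flags (the failure, for $x\in(0,1)$, of the naive term-by-term Laplace transform of $\sum_n f^{(n)}_{x-1}/\Gamma(\alpha n+\alpha)$). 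Your route sidesteps the analytic continuation in $x$ altogether and treats all $x<1$ uniformly, at the cost of needing the precise large-$q$ asymptotics of $E_{\alpha,\alpha}$; the $O(q^{-2})$ decay in fact depends on the additional cancellation $1/\Gamma(\alpha-\alpha)=1/\Gamma(0)=0$ in the algebraic tail of $E_{\alpha,\alpha}$, which is worth stating explicitly since ``two quantities with the same leading asymptotics'' does not by itself force the difference to be $O(q^{-2})$. The paper's route trades that delicate complex-asymptotic bookkeeping for an elementary real-variable estimate on a restricted parameter range plus an analyticity argument. The remaining steps of your proposal (the compensation formula, the Lévy-measure normalization $\int_0^\infty(e^{-qz}-1+qz)z^{-\alpha-1}\,dz=\Gamma(-\alpha)q^\alpha$, the duality/Kendall reduction for the Laplace transform, the reflection formula, and the absolute-convergence check for the term-by-term integration along $\Gamma_\theta$) all check out.
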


Given $c>0$, by the scaling relation \eqref{e:scaling}, $(\fpt_c,  X_{\fpt_c-}, \jp_{\fpt_c})$ has joint \pdf
\begin{align*}
  \varrho_c(t,x,z) =  c^{-\alpha-2}
  \varrho_1(c^{-\alpha} t, c^{-1} x, c^{-1} z).
\end{align*}
Furthermore,
\begin{align}  \label{e:h-scaling}
  h_c(x,t) = c^{-1} h_1(c^{-1} x, c^{-\alpha} t).
\end{align}
The core of Theorem \ref{t:h} is
\eqref{e:h-power} and a key step in its proof is to show
\begin{align}\label{e:h-series}
  h_1(x,t)=
  \sumzi n \frac{f\Sp n_{x-1}(t)}{\Gamma(\alpha n + \alpha)},
\end{align}
which can be formally written as 
\begin{align*}
  h_1(x,\cdot) = E_{\alpha, \alpha}(D) f_{x-1},
\end{align*}
where $D$ is the differential operator and $E_{\alpha,\alpha}(s)$ is a
Mittag-Leffler function (\cite{NIST:10, erdelyi:55:mcgraw}; see
section \ref{ss:laplace}).  Many detailed asymptotics of $f\Sp
n_{x-1}(t)$ can be found in \cite{gawronski:88:ap}.  It will be seen
that conditionally on $X_{\fpt_1-}=x$, $\jp_{\fpt_1}$ and $\fpt_1$ are
independent, with the latter having \pdf $h_1(x,\cdot)/v_1(x)$, where
\begin{align} \label{e:v-stable}
  v_1(x)
  =\intzi h_1(x,t)\,\dd t
  =
  \frac{1 - (x\vee0)^{\alpha-1}}{\Gamma(\alpha)}.
\end{align}
One may have noticed that when $x\in (0,1)$, $v_1(x)$ is strictly
smaller than $1/\Gamma(\alpha)$, whereas the sum of the term-wise
integrals of the series \eqref{e:h-series} is $1/\Gamma(\alpha)$.  The
lack of interchangeability of summation and integration reflects the
high oscillations of $f\Sp n_{x-1}(t)$ as functions of $t$, which
are tricky to tackle directly.  In this paper, \eqref {e:h-series}
will be first established for $x<a$, where $a\le0$ is a certain
constant, and then it will be established for all $x<1$ by analytic
extension.

Several results can be derived from Theorem \ref{t:h}.  First, an
integral representation of $h_1(x,t)$.
\begin{cor} \label{c:h}
  Under the same condition as above,
  \begin{align*}
    h_1(x,t)
    =
    \nth{\pi} 
    \intzi e^{-s t -(1-x) s^\ra \cos(\pi\da)}
    \sin((1-x) s^\ra \sin(\pi\da)) E_{\alpha, \alpha}(-s)\,\dd s.
  \end{align*}
\end{cor}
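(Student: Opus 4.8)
The plan is to invert the Laplace transform of the first-hitting-time density $f_{x-1}$ and feed the resulting integral into the series \eqref{e:h-series}. Fix $x<1$ and put $y=1-x>0$. Since $X$ is spectrally positive and not a subordinator it creeps downwards, so $\fht_{x-1}$ is the first passage time of $-X$ above the level $y$; as $-X$ is spectrally negative with Laplace exponent $q\mapsto q^\alpha$ by \eqref{e:stable-normal}, the subordinator $(\fht_{x-1})$ is $(1\da)$-stable with $\mean(e^{-q\fht_{x-1}})=e^{-yq^\ra}$. Hence $f_{x-1}$ is a $(1\da)$-stable subordinator density, for which the classical integral representation (via \eqref{e:Kendall} and Zolotarev's integral representation of $g_t$ \cite{zolotarev:66:stmsp}, or directly by collapsing the Bromwich contour onto the branch cut of $q\mapsto q^\ra$ along $(-\infty,0]$) gives, for every $t>0$,
\begin{align*}
  f_{x-1}(t)=\nth\pi\intzi e^{-st-ys^\ra\cos(\pi\da)}\sin(ys^\ra\sin(\pi\da))\,\dd s .
\end{align*}
Differentiating $n$ times in $t$ under the integral sign --- legitimate on each compact subinterval of $(0,\infty)$, since the $t$-derivatives of the integrand are dominated, locally uniformly in $t$, by a function of $s$ that is integrable over $(0,\infty)$: here $\lfrac1\alpha<1$ lets $e^{-st}$ overwhelm the factor $e^{-ys^\ra\cos(\pi\da)}$, which grows sub-exponentially because $\cos(\pi\da)<0$ for $\alpha\in(1,2)$ --- yields
\begin{align*}
  f\Sp n_{x-1}(t)=\frac{(-1)^n}{\pi}\intzi s^n e^{-st-ys^\ra\cos(\pi\da)}\sin(ys^\ra\sin(\pi\da))\,\dd s .
\end{align*}

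Substituting this into \eqref{e:h-series} and interchanging $\sumzi n$ with $\intzi\dd s$ gives
\begin{align*}
  h_1(x,t)&=\sumzi n\frac{f\Sp n_{x-1}(t)}{\Gamma(\alpha n+\alpha)}\\
  &=\nth\pi\intzi\Grp{\sumzi n\frac{(-s)^n}{\Gamma(\alpha n+\alpha)}}e^{-st-ys^\ra\cos(\pi\da)}\sin(ys^\ra\sin(\pi\da))\,\dd s,
\end{align*}
and since $\sumzi n(-s)^n/\Gamma(\alpha n+\alpha)=E_{\alpha,\alpha}(-s)$, this is exactly the asserted identity (recall $y=1-x$). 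The one substantive point is this interchange, which I would justify by Fubini--Tonelli: bounding the sine by $1$ in absolute value,
\begin{align*}
  \sumzi n\frac{|f\Sp n_{x-1}(t)|}{\Gamma(\alpha n+\alpha)}
  \le\nth\pi\intzi E_{\alpha,\alpha}(s)\,e^{-st+ys^\ra|\cos(\pi\da)|}\,\dd s,
\end{align*}
and the standard Mittag-Leffler asymptotic $E_{\alpha,\alpha}(s)\sim\alpha^{-1}s^{(1-\alpha)\da}e^{s^\ra}$ as $s\toi$ shows the right-hand side is finite for every fixed $t>0$ and $x<1$: since $\alpha>1$, the linear term $st$ eventually dominates $(1+y|\cos(\pi\da)|)s^\ra$. (Integrating \eqref{e:h-series} over $t$ first destroys this $e^{-st}$-localization, which is precisely why that series cannot be integrated term by term; for fixed $t$ there is no such obstruction.)

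I expect the two delicate points to be: making the Laplace inversion for $f_{x-1}$ rigorous in this regime --- where $\cos(\pi\da)<0$, so the convergence of the contour deformation rests on $\lfrac1\alpha<1$ together with the oscillation of the sine --- and the Fubini--Tonelli bound just described; the rest is bookkeeping. An alternative that avoids the contour argument is to start from the already established power series \eqref{e:h-power}: write $e^{-ys^\ra\cos(\pi\da)}\sin(ys^\ra\sin(\pi\da))=\Im\,e^{-ys^\ra e^{-\iunit\pi\da}}$, expand this and $E_{\alpha,\alpha}(-s)$ in powers of $s^\ra$ and $s$ inside the integral of Corollary \ref{c:h}, integrate term by term against $e^{-st}$ using $\intzi s^a e^{-st}\,\dd s=\Gamma(a+1)t^{-a-1}$, and verify that the resulting double series reduces, after reindexing, to \eqref{e:h-power}; the double-series-versus-integral interchange here is again controlled by $\alpha>1$.
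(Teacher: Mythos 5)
Your proof is correct, but it takes a genuinely different route from the paper's. The paper starts from the double power series \eqref{e:h-power}, substitutes the elementary identity $\Gamma(k\da+n)=\intzi s^{k\da+n-1}e^{-s}\,\dd s$, invokes the absolute convergence already proved in Lemma \ref{l:continuity} to swap the sum and integral, factors the resulting double sum under the integral into $\bigl(\Im\,e^{-s-(1-x)(-s/t)^\ra}\bigr)\cdot(-1/t)E_{\alpha,\alpha}(-s/t)$, and finishes with a change of variable. That route is entirely self-contained: no Laplace inversion, no contour deformation, no differentiation under the integral sign; the only analytic input is the already-established absolute convergence of \eqref{e:h-power} and the Gamma integral.

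You instead start from the single series \eqref{e:h-series}, invoke the classical Bromwich-contour representation of the $(\ra)$-stable subordinator density $f_{x-1}$, differentiate $n$ times under the integral, and then sum the Mittag-Leffler series under the integral via a Tonelli bound. This is conceptually natural --- it makes the heuristic $h_1(x,\cdot)=E_{\alpha,\alpha}(D)f_{x-1}$ literally visible under the integral sign --- but it imports external machinery (Laplace inversion of $e^{-yq^\ra}$ for index in $(1/2,1)$, where the branch-cut factor $e^{-ys^\ra\cos(\pi\da)}$ actually grows since $\cos(\pi\da)<0$) that the paper never sets up. Your justifications (local domination for differentiation under the integral; the bound $\sum_n |f^{(n)}_{x-1}(t)|/\Gamma(\alpha n+\alpha)\le \pi^{-1}\intzi E_{\alpha,\alpha}(s)e^{-st+ys^\ra|\cos(\pi\da)|}\,\dd s<\infty$ using the exponential asymptotics of $E_{\alpha,\alpha}$) are sound. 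One small observation: the contour representation of $f_{x-1}$ is itself equivalent, term by term, to the paper's Gamma trick applied to the series \eqref{e:pdf-f}, so if you wanted to stay entirely inside the paper you could derive your starting integral that way rather than by deforming a Bromwich contour; that is essentially your sketched alternative and it collapses to the paper's argument. Net: correct, but the paper's version is shorter and avoids the classical-inversion dependency.
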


The next result on the support of $h_1(x,t)$ will be used later and is
of interest in its own right.
\begin{cor} \label{c:nonzero}
  $h_1(x,t)>0$ for all $x<1$ and $t>0$.
\end{cor}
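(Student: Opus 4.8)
The plan is to realise $h_1(x,\cdot)$ as a killed transition density and run a support-type argument. By \eqref{e:hc} and $\{\rsup X_t\le 1\}=\{\fpt_1>t\}$,
\begin{align*}
  h_1(x,t)\,\dd x=\pr{X_t\in\dd x,\ \fpt_1>t},
\end{align*}
so $h_1(x,t)=p_t(0,x)$, the transition sub-density of $X$ killed on leaving the half-line $D=(-\infty,1)$; since $X$ is spectrally positive it can leave $D$ only upward, by a jump strictly across level $1$. As $p_t\ge 0$ trivially, it suffices to prove
\begin{align*}
  p_t(y,z)>0\quad\text{for all }y,z\in D\text{ and }t>0,
\end{align*}
the case $y=0$ being the assertion.

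First I would record a closed form for $p_t(y,z)$. By the time-reversal duality of \levy processes, $(\rsup X_t,X_t)\sim(X_t-\rinf X_t,X_t)$, and since the downward passage of a spectrally positive process is continuous, $\{\rinf X_t\ge z-1\}=\{\fht_{z-1}\ge t\}$ up to a null set. Conditioning on $\fht_{z-1}$, using the strong Markov property, and then invoking Kendall's identity \eqref{e:Kendall} gives, for $y,z\in D$,
\begin{align*}
  p_t(y,z)
  &=g_t(z-y)-\int_0^t f_{z-1}(r)\,g_{t-r}(1-y)\,\dd r\\
  &=g_t(z-y)-(1-z)\int_0^t \frac{g_r(z-1)\,g_{t-r}(1-y)}{r}\,\dd r,
\end{align*}
where $g_t$ is the \pdf of $X_t$ and $f_{z-1}$ that of $\fht_{z-1}$ (finite \as); for $y=0$ this reads $h_1(x,t)=g_t(x)-(1-x)\int_0^t r^{-1}g_r(x-1)g_{t-r}(1)\,\dd r$.

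The core step is local positivity in small time: for every compact $K\subset D$ there is $s_0=s_0(K)>0$ with $p_s(y,z)>0$ whenever $y,z\in K$ and $0<s\le s_0$. In the formula above $z-1<0$, so $r^{-1}g_r(z-1)$ is governed by the light left tail $g_1(-u)=\exp\!\big(-(k+o(1))u^{\alpha/(\alpha-1)}\big)$, $u\toi$, with $k=(\alpha-1)/\alpha^{\alpha/(\alpha-1)}>0$ (classical; see e.g.\ \cite{zolotarev:66:stmsp}); integrating in $r$ bounds the subtracted term by $\exp\!\big(-(k-\rx)(1-z)^{\alpha/(\alpha-1)}s^{-1/(\alpha-1)}\big)$ times a power of $s$, for any fixed $\rx>0$ and $s$ small. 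On the other hand $g_s(z-y)=s^{-1/\alpha}g_1\big((z-y)s^{-1/\alpha}\big)$ is bounded below by a power of $s$ when $z\ge y$, and by $\exp\!\big(-(k+\rx)(y-z)^{\alpha/(\alpha-1)}s^{-1/(\alpha-1)}\big)$ times a power of $s$ when $z<y$. Because $y<1$ forces $(y-z)_+<1-z$, and the gap $1-\max(y,z)$ stays bounded below on $K$, one can pick $\rx>0$ (depending on $K$) small enough that the main term dominates the correction as $s\dto0$, giving the claimed local positivity.

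Finally, chaining lifts this to all times. Given $z\in D$ and $t>0$, join $0$ to $z$ by a finite path $0=w_0,w_1,\dots,w_m=z$ inside $D$ with consecutive nodes close; fix a compact $K\subset D$ and a radius $\delta>0$ so that all the closed balls $\overline{B(w_i,\delta)}$ lie in $K$, and split $t=t_1+\dots+t_m$ with each $t_i\le s_0(K)$. The Chapman--Kolmogorov identity for the killed semigroup,
\begin{align*}
  p_t(0,z)=\int_{D^{m-1}} p_{t_1}(0,u_1)\,p_{t_2}(u_1,u_2)\cdots p_{t_m}(u_{m-1},z)\,\dd u_1\cdots\dd u_{m-1},
\end{align*}
restricted to $u_i\in B(w_i,\delta)$, then has every factor positive by the local positivity step, so $p_t(0,z)>0$. (Such a chain exists because $D$ is an interval.) The main obstacle is the small-time estimate: one must verify that paths which exit $D$ and return to a point $z<1$ contribute only a super-exponentially small correction, which rests on the exact large-deviation rate of the left tail of $X_1$ together with the elementary inequality $(y-z)_+<1-z$ on $D\times D$; the remaining ingredients---the duality formula, Chapman--Kolmogorov, the chaining---are routine.
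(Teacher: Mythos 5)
Your argument is valid in outline and reaches the conclusion by a genuinely different route from the paper. The paper keeps $(x,t)$ fixed and looks at $\varphi(c)=h_c(x,t)$ as a function of $c$: from \eqref{e:hc} it is nondecreasing in $c$; from \eqref{e:h-power-c} it extends analytically to $\Coms\setminus(-\infty,0]$; and from \eqref{e:pre-passage2} with dominated convergence, $\varphi(c)\to g_t(x)>0$ as $c\toi$. If $\varphi$ vanished at one $c>x\vee 0$, monotonicity would force it to vanish on an interval and analyticity then everywhere, contradicting the limit. That proof is short precisely because it reuses machinery already built for Theorem~\ref{t:h}. Your proof instead reads $h_1(\cdot,t)$ as the transition sub-density $p_t(0,\cdot)$ of $X$ killed on leaving $D=(-\infty,1)$, establishes uniform small-time positivity of $p_s(y,z)$ on compacta $K\subset D$ from the exponential rate of the light left tail of $g_1$, and propagates positivity to all $t$ by Chapman--Kolmogorov chaining through $D$; it yields the stronger claim $p_t(y,z)>0$ for all $y,z\in D$ and $t>0$. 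The comparison you isolate is the right one: the subtracted term in \eqref{e:pre-passage2} is controlled by $\pr{\fht_{z-1}\le s}\asymp\exp(-k(1-z)^{\alpha/(\alpha-1)}s^{-1/(\alpha-1)})$, while $g_s(z-y)$ decays no faster than $\exp(-k(y-z)_+^{\alpha/(\alpha-1)}s^{-1/(\alpha-1)})$ times a power, and $(y-z)_+<1-z$ on $D\times D$ with the gap in rates bounded away from zero on each compact $K$, which makes the choice of $s_0(K)$ legitimate. What you would still have to supply for full rigour is a precise form of the density asymptotics of $g_1$ (rate and prefactor) together with a uniformity argument over $K$, and a more careful statement of how the chain length $m$ and the compact $K$ are chosen before $s_0(K)$ is fixed; as written these are only sketched. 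Your route is more in the spirit of a support/irreducibility argument for killed Feller processes and is conceptually robust, but it is considerably longer than the paper's two-line analyticity-plus-monotonicity trick and does not exploit the series representation that the paper already has in hand.
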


In the last two corollaries, $h_c(x,t)$ is regarded as a function of
$t$ and $x$ with $c=1$ being fixed.  When $t$ is fixed and $c$ and $x$
are treated as variables, $h_c(x,t)$ provides the joint distribution
of $X_t$ and $\rsup X_t$.  Specifically, from \eqref{e:h-power} and
scaling, the following result obtains.  Since $(X_t, \rsup X_t)\sim
(t^\ra X_1, t^\ra \rsup X_1)$, it suffices to consider $t=1$.
\begin{cor} \label{c:joint-max}
  $X_1$ and $\rsup X_1$ have joint \pdf
  \begin{align*}
    \frac{\pr{X_1\in\dd x, \rsup X_1\in\dd c}}{\dd x\,\dd c}
    =
    \cf{c>(x\vee0)} \frac{\partial h_c(x,1)}{\partial c}
  \end{align*}
  with
  \begin{align}% \nonumber
    \frac{\partial h_c(x,1)}{\partial c}
    &=
    \nth{\pi}
    \sumoi{k,n} \frac{\Gamma(k\da + n)}{\Gamma(\alpha n) k!}
    (-1)^{k+n}  \sin(\pi k\da) [k c + (\alpha n-1)(c-x)]
    (c-x)^{k-1} c^{\alpha n -2}.
    \label{e:h-diff}
  \end{align}
\end{cor}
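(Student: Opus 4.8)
The plan is to recover the joint law of $(X_1,\rsup X_1)$ from the function $h_c(x,1)$ by differentiating it in the variable $c$. Indeed, \eqref{e:hc} with $t=1$ reads $h_c(x,1)\,\dd x=\pr{X_1\in\dd x,\ \rsup X_1\le c}$, so for $c_0>0$ and $x_0\in\Reals$, using $X_1\le\rsup X_1$,
\[
  \pr{X_1\le x_0,\ \rsup X_1\le c_0}=\int_{-\infty}^{x_0\wedge c_0}h_{c_0}(y,1)\,\dd y .
\]
I will (i) convert \eqref{e:h-power} and the scaling identity \eqref{e:h-scaling} into a double power series for $h_c(x,1)$ in $c-x$ and $c$; (ii) differentiate that series termwise in $c$; and (iii) plug the result back into the display via the fundamental theorem of calculus and Tonelli to identify it as the joint density.

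For (i), set $t=1$ in \eqref{e:h-scaling}, so $h_c(x,1)=c^{-1}h_1(c^{-1}x,c^{-\alpha})$, and substitute \eqref{e:h-power}. Since $(1-c^{-1}x)^k=c^{-k}(c-x)^k$ and $(c^{-\alpha})^{-k\da-n}=c^{k+\alpha n}$, the powers of $c$ combine into $c^{\alpha n-1}$ and
\[
  h_c(x,1)=\nth\pi\sumoi{k,n}(-1)^{k+n}\frac{\Gamma(k\da+n)}{\Gamma(\alpha n)\,k!}\sin(\pi k\da)\,(c-x)^k c^{\alpha n-1},\qquad c>\max(x,0).
\]
Differentiating the generic factor gives $\partial_c\bigl[(c-x)^k c^{\alpha n-1}\bigr]=(c-x)^{k-1}c^{\alpha n-2}\bigl[kc+(\alpha n-1)(c-x)\bigr]$, which is exactly the bracket in \eqref{e:h-diff}; so once termwise differentiation is justified, $\partial_c h_c(x,1)$ is the series in \eqref{e:h-diff}.

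Step (ii) is the one delicate point, but it is routine bookkeeping given the estimates behind Theorem~\ref{t:h}: those show that $\Gamma(k\da+n)/(\Gamma(\alpha n)\,k!)$ decays faster than any geometric rate in each of $k$ and $n$ (this is where $1\da<1<\alpha$ enters), so both the series for $h_c(x,1)$ and the series obtained by differentiating it termwise converge absolutely and uniformly on compact subsets of $\{(x,c):c>\max(x,0)\}$ — the extra factor $kc+(\alpha n-1)(c-x)=O(k+n)$ is harmless. Hence $c\mapsto h_c(x,1)$ is $C^1$ on $(\max(x,0),\infty)$ with derivative \eqref{e:h-diff}. Moreover every term in the series for $h_c(x,1)$ carries the factor $(c-x)^k c^{\alpha n-1}$ with $k\ge1$ and $\alpha n-1>0$, so, passing the (locally uniform) limit inside, $h_c(x,1)\to0$ as $c\dto\max(x,0)$.

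Finally, since $\{\rsup X_1\le c\}$ increases in $c$, $c\mapsto h_c(y,1)$ is nondecreasing, so $\partial_c h_c(y,1)\ge0$, and monotone convergence together with the previous step gives $h_{c_0}(y,1)=\int_{\max(y,0)}^{c_0}\partial_c h_c(y,1)\,\dd c=\int_0^{c_0}\cf{c>\max(y,0)}\,\partial_c h_c(y,1)\,\dd c$. Substituting this into the first display, extending the outer integral from $(-\infty,x_0\wedge c_0)$ to $(-\infty,x_0)$ (the added strip contributes nothing because $\cf{c>\max(y,0)}=0$ there for $c\in(0,c_0)$), and applying Tonelli yields
\[
  \pr{X_1\le x_0,\ \rsup X_1\le c_0}=\int_{-\infty}^{x_0}\!\int_0^{c_0}\cf{c>\max(y,0)}\,\partial_c h_c(y,1)\,\dd c\,\dd y
\]
for all $x_0\in\Reals$ and $c_0>0$; since $\rsup X_1\ge0$ \as, such rectangles determine the joint law, which therefore has the density claimed in the statement. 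As a check, letting $c_0\toi$ recovers $\int\cf{c>(x\vee0)}\partial_c h_c(x,1)\,\dd c=g_1(x)$. I expect no real obstacle: the only substantive point is the termwise differentiation in (ii), and that follows directly from the convergence estimates already in hand for Theorem~\ref{t:h}.
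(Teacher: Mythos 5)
Your proposal is correct and follows essentially the same route as the paper: use the scaling relation \eqref{e:h-scaling} to rewrite $h_c(x,1)$ as a double power series in $(c-x)$ and $c$ (this is exactly \eqref{e:h-power-c} at $t=1$), differentiate termwise in $c$, and justify the interchange by locally uniform absolute convergence (the paper bounds the differentiated terms by $D_{k,n}=\frac{\Gamma(k\da+n)}{\Gamma(\alpha n)k!}(k+\alpha n-1)M^{k+\alpha n-1}/a$ and invokes the same integral-representation estimate used for Lemma~\ref{l:continuity}). Your additional paragraphs verifying $h_c(x,1)\to0$ as $c\downarrow x\vee0$ and passing through Tonelli to identify the joint density are a slightly more explicit write-up of a step the paper treats as immediate, but they are correct and not a deviation in method.
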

\begin{remark}
  For a standard Brownian motion $W$, it is known that 
  (\cite{jeanblanc:09:sv-l}, Corollary 3.2.1.2).
  \begin{align} \label{e:Brownian}
    \frac{\pr{W_1\in\dd x, \sup_{s\le 1} W_s\in\dd c}}{\dd
      x\,\dd c}
    =
    \cf{c>(x\vee0)} \frac{2(2c-x)}{\sqrt{2\pi}}
    \Cbr{-\frac{(2c-x)^2}{2}}.
  \end{align}
  It will be shown in the Appendix that \eqref{e:Brownian} can be
  deduced from \eqref{e:h-diff}.  Note that by \eqref
  {e:stable-normal}, for $\alpha=2$, $(X_t)_{t\ge 0}
  \sim(W_{2t})_{t\ge 0}$.
\end{remark}

The next corollary combined with Theorem \ref{t:h} gives the joint
distribution of $\fpt_1$, $X_{\fpt_1-}$, $\jp_{\fpt_1}$, and the
pre-passage running supremum $\rsup X_{\fpt_1-}$.
\begin{cor}\label{c:pre-max}
  Conditionally on $\fpt_1=t$ and $X_{\fpt_1-}=x<1$, $\jp_{\fpt_1}$
  and $\rsup X_{\fpt_1-}$ are independent, such that $\jp_{\fpt_1}$
  follows a Pareto distribution with
  \begin{align*}
    \pr{\jp_{\fpt_1}\in
    \dd z\gv \fpt_1=t, X_{\fpt_1-}=x} = \alpha (1-x)^\alpha
    z^{-\alpha-1}\cf{z > 1-x}\,\dd z,
  \end{align*}
  and for each $c\in [x\vee 0, 1]$,
  \begin{align*}
    \pr{\rsup X_{\fpt_1-} \le c\gv\fpt_1=t, X_{\fpt_1-}=x} =
    h_c(x,t)/h_1(x,t).
  \end{align*}
\end{cor}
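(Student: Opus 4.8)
The plan is to condition on the "first passage data" $(\fpt_1, X_{\fpt_1-}) = (t, x)$ and show that the remaining two random variables $\jp_{\fpt_1}$ (the overshoot jump) and $\rsup X_{\fpt_1-}$ (the pre-passage supremum) decouple. The starting point is the structure already exposed in Theorem \ref{t:h}: the joint density $\varrho_1(t,x,z)$ factors as $\dfrac{z^{-\alpha-1}}{\Gamma(-\alpha)}\cf{x<1<x+z}\,h_1(x,t)$, where $h_1(x,t)$ is the density (in $x$) of $X_t$ restricted to $\{\rsup X_t\le 1\}$. First I would recall the classical compensation/Lévy-system description of first passage by a jump for a spectrally positive Lévy process: first passage of level $1$ occurs by a jump from some pre-passage position $X_{\fpt_1-}=x<1$, and given that position (together with $\fpt_1=t$), the jump $\jp_{\fpt_1}$ that effects the crossing is distributed according to the Lévy measure $\levym(\dd z)$ conditioned on $\{z>1-x\}$. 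For the stable process normalized by \eqref{e:stable-normal}, the Lévy measure on $(0,\infty)$ is a constant multiple of $z^{-\alpha-1}\,\dd z$ — indeed the normalization \eqref{e:stable-normal} forces $\levym(\dd z)=\dfrac{\alpha(\alpha-1)}{\Gamma(2-\alpha)}z^{-\alpha-1}\,\dd z$ up to the reflection convention — so conditioning on $\{z>1-x\}$ yields exactly the Pareto law $\alpha(1-x)^\alpha z^{-\alpha-1}\cf{z>1-x}\,\dd z$ (the renormalization constant cancels). This already gives the claimed conditional distribution of $\jp_{\fpt_1}$ and shows it depends only on $x$, not on $t$ and not on the pre-passage path.

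Next I would handle the pre-passage supremum. The key point is that the event $\{\fpt_1=t,\ X_{\fpt_1-}=x\}$ pins down the left limit of the path at the crossing time and records that $X$ has not exceeded $1$ before $t$; conditionally on this, the pre-passage path $(X_s)_{0\le s<t}$ is a bridge-type object whose law does not involve $z$ at all. Hence $\rsup X_{\fpt_1-}=\sup_{0\le s<t}X_s$ and $\jp_{\fpt_1}$ are conditionally independent given $(\fpt_1,X_{\fpt_1-})$ — the jump depends only on the Lévy measure and the endpoint $x$, the supremum only on the bridge. To get the explicit conditional CDF of $\rsup X_{\fpt_1-}$, I would compute the joint sub-density of $(\fpt_1, X_{\fpt_1-}, \rsup X_{\fpt_1-})$ by the same compensation argument but now also restricting the pre-passage path to stay below the level $c\le 1$: this replaces $h_1(x,t)=\dfrac{\pr{X_t\in\dd x,\ \rsup X_t\le 1}}{\dd x}$ by $h_c(x,t)=\dfrac{\pr{X_t\in\dd x,\ \rsup X_t\le c}}{\dd x}$ in exactly the slot where $h_1(x,t)$ appeared, with the overshoot factor $\dfrac{z^{-\alpha-1}}{\Gamma(-\alpha)}\cf{x<1<x+z}$ unchanged. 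Integrating out $z$ and dividing by the already-known joint density of $(\fpt_1, X_{\fpt_1-})$ (whose $(t,x)$-part is $v_1(x)h_1(x,t)/v_1(x)=h_1(x,t)$ up to the overshoot normalization, cf.\ \eqref{e:v-stable} and the remark following \eqref{e:h-series}) yields $\pr{\rsup X_{\fpt_1-}\le c\gv \fpt_1=t, X_{\fpt_1-}=x}=h_c(x,t)/h_1(x,t)$, valid for $c\in[x\vee 0,1]$ since the pre-passage supremum is at least $X_{\fpt_1-}\vee X_0=x\vee 0$ and at most $1$.

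I expect the main obstacle to be making the "conditional independence given $(\fpt_1, X_{\fpt_1-})$" rigorous rather than heuristic. The cleanest route is the master formula / compensation formula for the Poisson point process of jumps (Bertoin, Ch.\ O and I): for a nonnegative predictable functional one writes $\mean\sum_{s} F(s, (X_u)_{u<s}, \jp_s)\cf{\jp_s \text{ effects first passage}}$ as $\mean\int_0^{\fpt_1^-}\!\!\int_{(0,\infty)} F(s,(X_u)_{u\le s}, z)\,\levym(\dd z)\,\dd s$, then choose $F(s,\omega,z)=\phi(s)\psi(\rsup_{u<s}\omega_u)\,\chi(z)\,\cf{\omega_{s}\in\dd x,\ \rsup_{u\le s}\omega_u\le c}$ and read off the product structure: the $\chi(z)$ integral against $\levym$ factors out, and the remaining expectation over $\phi,\psi$ and the pre-$s$ path is precisely what defines $h_c$ and the conditional law of $\rsup X_{\fpt_1-}$. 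Once this disintegration is written down correctly — in particular checking that the first-passage-by-jump event has full probability (true since $X$ is spectrally positive and not a subordinator, so it creeps downward only and must cross upward by a jump) and that Fubini is licit because all integrands are nonnegative — the two stated formulas fall out by inspection, and the Pareto normalization is a one-line computation. A secondary, minor point to verify is the exact value of the Lévy-measure constant under \eqref{e:stable-normal}, but since it cancels in the conditional law it only needs to be nonzero and finite on $\{z>1-x\}$, which is automatic.
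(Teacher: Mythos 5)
Your plan is correct and takes essentially the same route as the paper: the compensation-formula/Lévy-system argument you sketch is exactly what the paper has already packaged into Propositions~\ref{p:fp} and~\ref{p:fp-df}, after which Corollary~\ref{c:pre-max} is a one-line consequence using the fact that a spectrally positive process with infinite variation does not creep upward, so that $\jp_{\fpt_1}>0$ \as (i.e.\ the event $D_1$ has full probability). The only thing worth flagging is that your justification of the non-creeping fact (``creeps downward only and must cross upward by a jump'') is a heuristic rather than a proof -- it is a nontrivial result that needs to be cited, as the paper does -- though you do correctly recognize it as a point requiring verification.
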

\begin{remark}
  The foundation of conditional probability and conditional \pdf is
  measure theory \cite {breiman:92:siam}.  In Corollary \ref
  {c:pre-max}, each can be expressed in terms of a joint \pdf.  For
  example, if $k(z,t,x)$ is the joint \pdf of $\rsup X_{\fpt_1-}$,
  $\fpt_1$, and $X_{\fpt_1-}$, then $\pr{\rsup X_{\fpt_1-} \le
    c\gv\fpt_1=t, X_{\fpt_1-}=x}=\int^c_0 k(z, t, x)\,\dd z/\int^1_0
  k(z, t,x)\,\dd z$.
\end{remark}

By further analysis of $h_c(x,t)$, the joint \pdf of $X_t$, the
running supremum $\rsup X_t$, and the time of the running supremum
$\rsup G_t = \sup\{s<t: X_s = \rsup X_s\}$ can be obtained.  Since by
scaling
\begin{align*}
  (\rsup G_t, \rsup X_t, X_t)\sim (t\rsup G_1, t^\ra\rsup X_1, t^\ra
  X_1),
\end{align*}
it suffices to consider $t=1$.  As noted earlier, the distribution of
$\rsup X_1$ is known \cite {bernyk:08:ap, simon:10:expomath,
  simon:11:sto}.  The distribution of $\rsup G_1$ is also known.
Indeed, $\rsup G_t = \Lambda_{\vartheta_t-}$, where $\vartheta_t =
\inf\{s>0: \Lambda_s>t\}$ and $\Lambda$ is the ladder time process of
$X$, which is strictly stable with index $1-\ra$ (\cite
{bertoin:96:cup}, Lemma VIII.1).  Then by scaling, $\rsup G_1 \sim
\rsup G_t/t = \Lambda_{\vartheta_t-}/t$ and letting $t\to0$ yields
$\rsup G_1\sim \Dbeta(1-\ra, \ra)$ according to the generalized
arcsine law (\cite {bertoin:96:cup}, Theorem III.6).  That is, the
\pdf of $\rsup G_1$ at $x\in (0,1)$ is $\pi^{-1} \sin(\pi\da) x^{-\ra}
(1-x)^{\ra-1}$.  Also, from the excursion theory (\cite
{bertoin:96:cup},  IV.4), conditionally on $\rsup G_1$, $(X_t)_{t\le
  \rsup G_1}$ and $(X_{t+\rsup G_1} - X_{\rsup G_1})_{t\le 1-\rsup
  G_1}$ are independent.  With this background, we have the next
result.  By $(\rinf G_1, \rinf X_1, X_1)\sim (1-\rsup G_1, X_1 - \rsup
X_1, X_1)$, where $\rinf X_t = \inf_{0\le s\le t} X_s$ and $\rinf G_t
= \sup\{s<t: X_s = \rinf X_s\}$, it also provides the joint \pdf of
$\rinf G_1$, $\rinf X_1$, and $X_1$.
\begin{cor}\label{c:running-max}
  $\rsup G_1$, $\rsup X_1$, and $X_1$ have joint \pdf
  \begin{align} \label{e:running-max}
    \frac{\pr{\rsup G_1\in\dd r, \rsup X_1\in\dd c, X_1\in\dd x}}
    {\dd r\,\dd c\,\dd x}
    =
    m(c,r) f_{x-c}(1-r)
  \end{align}
  for $r\in (0,1)$ and $c>x\vee 0$, where 
  \begin{align} \label{e:hc-y1}
    m(c,r)
    &=
    \frac{\sin(\pi\da)}{\pi c} \sumoi n
    \frac{\Gamma(\ra + n)}{\Gamma(\alpha n)}
    (-1)^{1+n} c^{\alpha n} r^{-\ra-n}
    \\\label{e:hc-y2}
    &=
    \frac{\sin(\pi\da)}{\pi c^2}
    \intzi s^\ra E_{\alpha,\alpha}(-s) e^{-s r/c^\alpha}\,\dd s > 0.
  \end{align}
  Moreover, conditionally on $\rsup G_1=r\in (0,t)$, $\rsup X_1$ and
  $\rsup X_1 - X_1$ are independent, such that
  \begin{align} \label{e:max-m}
    \frac{\pr{\rsup X_1\in\dd c\gv \rsup G_1=r}}{\dd c}
    = \Gamma(1- \ra) r^\ra m(c,r), \quad c>0,
  \end{align}
  and
  \begin{align*}
    \frac{\pr{\rsup X_1-X_1\in \dd x\gv \rsup G_1 =r}}{\dd x}
    =
    \frac{\Gamma(\ra) x g_{1-r}(-x)}{(1-r)^\ra}, \quad x>0.
  \end{align*}
\end{cor}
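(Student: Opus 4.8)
The plan is to cut the path of $X$ on $[0,1]$ at $\rsup G_1$, exactly as in the excursion set-up recalled above, read off the resulting product structure, and then evaluate the two factors using $h_c$ and Kendall's identity. Put $r=\rsup G_1$, $c=\rsup X_1$; since $r$ is a ladder time, $X_r=\rsup X_r=c$. Conditionally on $\rsup G_1=r$ the pre-maximum path $(X_s)_{0\le s\le r}$ and the shifted post-maximum path $(X_{r+s}-X_r)_{0\le s\le 1-r}$ are independent; the first governs $(\rsup G_1,\rsup X_1)$ and the second governs $\rsup X_1-X_1=c-X_1$. Hence $\rsup X_1$ and $\rsup X_1-X_1$ are conditionally independent given $\rsup G_1$, and the joint density of $(\rsup G_1,\rsup X_1,X_1)$ factors as
\begin{align*}
  \frac{\pr{\rsup G_1\in\dd r,\ \rsup X_1\in\dd c,\ X_1\in\dd x}}{\dd r\,\dd c\,\dd x}
  = u(r,c)\,v(1-r,\,c-x),\qquad r\in(0,1),\ c>x\vee0,
\end{align*}
where $u(r,c)$ is the density of the bivariate ascending ladder potential measure of $X$ (the density of $X$ sitting on a new record of height $c$ at time $r$) and $v(s,y)$ is the density in $y$ of the It\^o excursion measure of the reflected process $R=\rsup X-X$ at age $s$. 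It remains to identify $v(s,y)=f_{-y}(s)$ and $u(r,c)=m(c,r)$.

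For the post-maximum factor: since $X$ has no negative jumps it creeps downward, so its descending ladder height process is the identity and the associated potential measure is Lebesgue measure; every excursion of $R$ therefore leaves $0$ continuously. With the normalisation \eqref{e:stable-normal} --- which makes $(\fht_{-a})_{a\ge0}$ the $\ra$-stable subordinator with Laplace exponent $\lambda^\ra$, and for which $\mean X_1^-=1/\Gamma(\ra)$ and $\prob\{X_1<0\}=\ra$ --- the classical identification of the excursion measure of a reflected L\'evy process through its ladder structure gives $v(s,y)\,\dd y=f_{-y}(s)\,\dd y$, which by Kendall's identity \eqref{e:Kendall} equals $y\,g_s(-y)\,s^{-1}\,\dd y$. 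One checks the normalisation by integrating in $y$: $\int_0^\infty f_{-y}(s)\,\dd y=s^{\ra-1}\mean X_1^-=s^{\ra-1}/\Gamma(\ra)$, which matches the excursion-length tail forced by the generalised arcsine law for $\rsup G_1$ together with the $(1-\ra)$-stable ladder time process.

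For the pre-maximum factor, integrating the display in $x$ and using the previous step yields the renewal-type identity $\partial_c h_c(x,t)=\int_0^t u(r,c)\,f_{x-c}(t-r)\,\dd r$, the left side being the joint density of $(X_t,\rsup X_t)$ (Corollary \ref{c:joint-max}, valid for all $t$ by scaling). Taking the Laplace transform in $t$ and using $\int_0^\infty e^{-\lambda t}f_{-a}(t)\,\dd t=e^{-a\lambda^\ra}$ gives $\int_0^\infty e^{-\lambda r}u(r,c)\,\dd r=e^{(c-x)\lambda^\ra}\,\partial_c\int_0^\infty e^{-\lambda t}h_c(x,t)\,\dd t$. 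By Corollary \ref{c:h} --- equivalently by the scale-function formula for the spectrally negative dual $-X$, whose $\lambda$-scale function is $W^{(\lambda)}(y)=y^{\alpha-1}E_{\alpha,\alpha}(\lambda y^\alpha)\cf{y>0}$ --- one has $\int_0^\infty e^{-\lambda t}h_c(x,t)\,\dd t=W^{(\lambda)}(c)\,e^{-\lambda^\ra(c-x)}-W^{(\lambda)}(x)$ for $x<c$ (at $\lambda=0$ this reduces to \eqref{e:v-stable}). Hence $\int_0^\infty e^{-\lambda r}u(r,c)\,\dd r=W^{(\lambda)\prime}(c)-\lambda^\ra W^{(\lambda)}(c)$; the $x$-dependence cancels, the required consistency check, and this equals $\partial_c h_c(x,r)\big|_{x=c}$ after Laplace inversion, so $u(r,c)=\partial_c h_c(x,r)\big|_{x=c}$. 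Setting $x=c$ in \eqref{e:h-diff} kills every term with $k\ge2$ and leaves the series \eqref{e:hc-y1}, while setting $x=c$ in the integral of Corollary \ref{c:h} gives \eqref{e:hc-y2}; the two coincide term by term after expanding $E_{\alpha,\alpha}$, and positivity $m(c,r)>0$ is immediate from \eqref{e:hc-y2} (or from Corollary \ref{c:nonzero}).

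Combining the three steps gives \eqref{e:running-max}. Integrating \eqref{e:running-max} in $x$ and using $\int_0^\infty f_{-y}(1-r)\,\dd y=(1-r)^{\ra-1}/\Gamma(\ra)$ gives $\pr{\rsup G_1\in\dd r,\rsup X_1\in\dd c}=\Gamma(\ra)^{-1}(1-r)^{\ra-1}m(c,r)\,\dd r\,\dd c$; dividing by the arcsine density $\pi^{-1}\sin(\pi\da)r^{-\ra}(1-r)^{\ra-1}$ of $\rsup G_1$ and using $\Gamma(\ra)\Gamma(1-\ra)=\pi/\sin(\pi\da)$ yields \eqref{e:max-m}. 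Likewise, integrating \eqref{e:running-max} in $c$ and using $\int_0^\infty m(c,r)\,\dd c=r^{-\ra}/\Gamma(1-\ra)$ (the ascending ladder time potential density), then dividing by the same arcsine density, gives $\pr{\rsup X_1-X_1\in\dd x\gv\rsup G_1=r}=\Gamma(\ra)(1-r)^{1-\ra}f_{-x}(1-r)\,\dd x$, which by \eqref{e:Kendall} is $\Gamma(\ra)\,x\,g_{1-r}(-x)(1-r)^{-\ra}\,\dd x$; the conditional independence is the one from the first step. The anticipated main obstacle is precisely that first and second step together with the surrounding normalisation bookkeeping: one must justify that the joint \emph{density} factors as ladder potential density times excursion-at-age density (not merely that the path-fragments are independent in law), and then pin down $v$ as $f_{-y}$ with exactly the right constant, so that all the $\Gamma$-factors telescope through the reflection formula; by contrast the $h_c$ and Mittag-Leffler manipulations, though lengthy, are routine once the Laplace transform of $h_c$ is in hand.
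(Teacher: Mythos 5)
Your route is genuinely different from the paper's. The paper proves \eqref{e:running-max} by a bare-hands computation: writing $a(x,c,r)=\pr{X_1\in\dd x,\rsup X_1\in\dd c,\rsup G_1\le r}/(\dd x\,\dd c)$, applying the Markov property at the two times $r$ and $r+\rx$, making a careful change of variables, and passing to the $\rx\to0$ limit by dominated convergence; the technical content is in Lemmas~\ref{l:integral} and~\ref{l:hc}, which furnish the needed integrability and pointwise limits of $h_c(c-x,t)/x$ and the kernel $I(u,v)$. You instead invoke the excursion/Wiener--Hopf architecture: split the path at $\rsup G_1$, assert that the joint density factors as (bivariate ascending ladder renewal density) $\times$ (excursion-at-age density of the reflected process), identify the second factor as $f_{-y}(s)$ via creeping/duality, and then pin down the first factor through the renewal identity $\partial_c h_c(x,t)=\int_0^t u(r,c)f_{x-c}(t-r)\,\dd r$ and the scale-function expression $\int_0^\infty e^{-\lambda t}h_c(x,t)\,\dd t=W^{(\lambda)}(c)e^{-\lambda^\ra(c-x)}-W^{(\lambda)}(x\vee0)$ with $W^{(\lambda)}(y)=y^{\alpha-1}E_{\alpha,\alpha}(\lambda y^\alpha)$. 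I checked the bookkeeping: the Laplace-transform identity agrees with Proposition~\ref{p:LT} after scaling, the cancellation $\int e^{-\lambda r}u(r,c)\,\dd r=W^{(\lambda)\prime}(c)-\lambda^\ra W^{(\lambda)}(c)$ is independent of $x$, and setting $x=c$ in~\eqref{e:h-diff} does leave exactly the $k=1$ series~\eqref{e:hc-y1}; the derivation of \eqref{e:max-m} and the conditional law of $\rsup X_1-X_1$ also come out right (one should change variables to $y=c-x$ before integrating in $c$, but the computation goes through). So the proposal is correct in its conclusions.

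The trade is one of machinery versus elementary estimates. Your argument is more conceptual and makes the appearance of $m(c,r)$ (the ladder renewal density) and of $f_{-y}$ (the excursion-at-age kernel) structurally transparent, and it recovers as a byproduct the ladder exponent~\eqref{e:ladder}. The paper's argument is self-contained in the sense that it uses only the Markov property, the already-proved expansion of $h_c$, and dominated convergence; it never has to identify the excursion measure. The price you pay is exactly what you flag yourself: the step ``joint density $=$ ladder potential density $\times$ excursion-at-age density, with the second factor equal to $f_{-y}(s)$'' is invoked as ``the classical identification'' rather than proved. That step conceals nontrivial work (last-exit decomposition for the reflected process, normalization of local time consistent with~\eqref{e:stable-normal}, and the duality/time-reversal argument identifying the excursion-at-age density with the first-hitting-time density of $-X$). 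If you wanted a complete proof by your route you would need to supply that lemma; as written, that is the one genuine gap. A small slip: ``integrating the display in $x$'' should read ``integrating the factorization over $r\in(0,t)$'' to obtain the renewal identity, and the scale-function formula requires $W^{(\lambda)}(x\vee0)$ rather than $W^{(\lambda)}(x)$ so that it also covers $x<0$.
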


\begin{remark}
  1)  For any $r>0$,
  \begin{align*}
    \frac{\pr{\rsup X_1\in r^\ra \dd c\gv \rsup G_1=r}}{\dd c}
    = \Gamma(1-\ra) r^{2\da} m(r^\ra c, r)
    = \Gamma(1-\ra) m(c,1),
  \end{align*}
  therefore $\rsup X_1/\rsup G_1^\ra$ is independent of $\rsup
  G_1$.  This is a special case of the result in \cite
  {molchan:01:tpa} that shows the independence for any strictly stable
  process.

  2) By duality, it is natural to interpret $\Gamma(\ra) x g_1(-x) =
  \Gamma(\ra) f_{-x}(1)$, $x>0$, as the conditional \pdf of $X_1$ at
  $-x$ given $\rsup X_1=0$.  Likewise, by letting $r=1$ in \eqref
  {e:max-m} and reading $\pr{\rsup X_1\in\dd c\gv \rsup G_1=1}$
  as $\pr{\rsup X_1\in\dd c\gv X_1 = \rsup X_1} = \pr{\rsup X_1\in\dd
    c\gv \rinf X_1=0}$, $\Gamma(1-\ra) m(c,1)$ may be interpreted as
  the conditional \pdf of $\rsup X_1$ at $c$ given $\rinf X_1=0$; see
  more comments in section \ref{s:proof}.
  
  3) It is worth mentioning that, for a \levy process $X$ in general,
  if under its law 0 is regular for $(0,\infty)$ and for $(-\infty,
  0)$, then for any $t>0$, $X$ is continuous at $\rsup G_t$.  First,
  $\rsup G_t\in(0,t)$ \as (see \cite{bertoin:96:cup}, p.~157).
  Second, given $\rx>0$, any $t_0\in (0,t)$ where $X$ makes a positive
  jump of size at least $\rx$ is a stopping time, so by the regularity
  of 0 for $(0,\infty)$, there are infinitely many $1>t_n\dto t_0$
  with $X_{t_n}>X_{t_0} > X_{t_0-}$.  On the other hand, any $t_0\in
  (0,t)$ where $X$ makes a negative jump of absolute size at least
  $\rx$ is a stopping time, so by duality and the regularity of 0 for
  $(-\infty, 0)$, there are infinitely many $0<t_n\uto t_0$ with
  $X_{t_n}>X_{t_0-} > X_{t_0}$.  Since $\rx>0$ is arbitrary, this
  implies that $\rsup G_t$ cannot be a time where $X$ makes a jump,
  and so $X$ is continuous at $\rsup G_t$.
\end{remark}

It can be seen that $m(c,t)\,\dd t\,\dd c$ is the renewal measure of
the bivariate ascending ladder (time and height) process of $X$, by
using the quintuple law for first passage in \cite {doney:06:aap} or
more directly, by using $\mean(e^{-\beta \rsup X_\tau}) = \kappa(q,
0)/\kappa(q, \beta) $, $q>0$, $\beta>0$, where $\kappa(\lambda,
\beta)$ is the characteristic exponent of the ladder process, and
$\tau$ is a random variable with \pdf $q e^{-q x} \cf{x>0}$
independent of $X$ (\cite {bertoin:96:cup}, p.~163).  First, by
\eqref{e:max-m} and $\rsup G_1\sim \Dbeta(1-\ra, \ra)$, the joint \pdf
of $(\rsup G_1, \rsup X_1)$ can be written down.  Then by scaling and
\eqref{e:hc-y2}, for each $t>0$, $(\rsup G_t, \rsup X_t)$ has joint
\pdf
\begin{align*}
  \frac{t^{-1 - \ra} m(ct^{-\ra}, r/t)(1-r/t)^{\ra-1} \cf{0<r<t}}
  {\Gamma(\ra)}
  =
  \frac{m(c, r)(t-r)^{\ra-1}\cf{0<r<t}}{\Gamma(\ra)}.
\end{align*}
Then
\begin{align*}
  \mean(e^{-\beta \rsup X_\tau})
  &=
    \frac{1}{\Gamma(\ra)}\int_{c>0, t>r>0}
    m(c,t)(t-r)^{\ra-1} e^{-\beta c}
    \,\dd r\,\dd c\times (q e^{-q t})\,\dd t
  \\
  &=
    q^{1-\ra} \int_{c>0,r>0} m(c,r) e^{-q r - \beta c}\,\dd r\,\dd c.
\end{align*}
On the other hand, $\kappa(q,0) = q^{1-\ra}$ (\cite{bertoin:96:cup},
p.~218).  Therefore,
\begin{align}  \label{e:ladder0}
  \kappa(q,\beta) = \Grp{\int_{c>0,r>0} m(c,r) e^{-q r - \beta c}\,\dd
  r\,\dd c}^{-1},
\end{align}
and so $m(c,r)$ is the density of the renewal measure of the
ladder process.  From \eqref{e:hc-y2},
\begin{align*}
  \intzi m(c,r) e^{-q r}\,\dd r
  =
  \frac{\sin(\pi\da)}{\pi c^2}
  \intzi \frac{s^\ra E_{\alpha,\alpha}(-s)}{q+s/c^\alpha}\,\dd s.
\end{align*}
The integral representation does not seem to provide an easy path to
an explicit formula for $\kappa(q,\beta)$.  On the other hand, it can
be shown that for $q\ge0$, $\beta\ge0$,
\begin{align} \label{e:ladder}
  \kappa(q,\beta) =
  \begin{cases}
    \displaystyle \frac{\beta^\alpha - q}{\beta - q^\ra} & \text{if
    } \beta \ne q^\ra, \\[2ex]
    \alpha \beta^{\alpha-1} & \text{else.}
  \end{cases}
\end{align}

The formula can be derived from a series expansion of $\kappa(q,
\beta)$ in \cite{graczyk:11:aihp}, which holds for any non-monotone
strictly stable process with index in a dense subset $\Cal A$ of
$(0,2)\setminus \mathbb Q$.  In the case of $X$, provided $\alpha\in
(1,2)\cap \Cal A$, the series can be reduced to the closed
form in \eqref{e:ladder}.  Then by continuity, \eqref{e:ladder} holds
for all $\alpha\in (1,2)$.  In the Appendix, we will give an
alternative proof of \eqref{e:ladder} without relying on the
continuity argument.

In the next section, as a preparation, some general results on first
passage of a \levy process are derived.  This section also collects
some standard results on stable processes.  In section \ref{s:proof},
Theorem~\ref{t:h} and its corollaries are proved.  In section \ref
{s:exact-sampling}, we show that $(\fpt_1, X_{\fpt_1-}, \jp {\fpt_1})$
can be sampled exactly.  It will be seen that the main issue is the
sampling of $h_1(x, \cdot)/v_1(x)$ for any fixed $x<1$, which is the
conditional \pdf of $\fpt_1$ given $X_{\fpt_1-} = x$.   The key is to
show that $h_1(x,t)$ can be decomposed as the sum of positive
functions $\phi_1(t)$, $\phi_2(t)$, \ldots.   Even though these
functions do not have a closed form, given $t>0$, each can be
evaluated in a finite number of steps, and for the exact sampling,
only a finite number of them have to be evaluated.  It is important to
keep in mind that these functions are constructed with the value of
$h_1(x,t)$ being intractable.  The decomposition then allows the
conditional \pdf of $\fpt_1$ to be sampled by the rejection
sampling method.

\section{Some general distributional results} \label{s:general}
We first consider \levy processes in general, and then specialize to
spectrally positive ones.

\subsection{Properties of first passage by a general \levy process}
\begin{prop} \label{p:fp}
  Let $X$ be a \levy process and $\levym(\dd x)$ its \levy measure.
  \begin{thmenum}
  \item \label{i:fp} \emph{(Distribution when $X$ jumps over a
      level).} 
    For every $c\ge 0$, $t>0$, $x\in\Reals$, $w\in\Reals$, and
    $y>c$,
    \begin{align} \nonumber
      &
        \pr{\fpt_c\in\dd t, X_{\fpt_c-}\in \dd x,
        X_{\fpt_c}\in \dd y, \rsup X_{\fpt_c-}\in\dd w
        }
      \\\label{e:first-passage-3}
      &=
        \cf{x\vee 0\le y\le c}\dd t\,\levym(\dd y-x)\,
        \pr{X_t\in \dd x, \rsup X_t\in\dd w}.
    \end{align}
  \item \label{i:fp2}
    For every $c\ge 0$, $\pr{X_{\fpt_c-} < X_{\fpt_c} = c} = 0$.
  \end{thmenum}
\end{prop}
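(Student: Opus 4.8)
The plan is to base both parts on the compensation (L\'evy system) formula for the jumps of $X$: for any $\phi(\omega,s,u)\ge0$ predictable in $(\omega,s)$ for each $u$,
\begin{align*}
  \mean\Sbr{\,\sum_{s>0}\phi(s,\jp_s)\,\cf{\jp_s\ne0}}
  =
  \mean\Sbr{\,\intzi\dd s\int_{\Reals\setminus\{0\}}\phi(s,u)\,\levym(\dd u)}
\end{align*}
(see \cite{bertoin:96:cup}). For part (a), the key observation is that since $\fpt_c=\inf\{t>0:X_t>c\}$ forces $X_{\fpt_c-}\le c$, the event $\{X_{\fpt_c}>c\}$ is exactly the event that first passage happens by a jump, and on it there is a \emph{unique} jump time $s$ with $\rsup X_{s-}\le c$ and $X_{s-}+\jp_s>c$, namely $s=\fpt_c$. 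Thus for $y>c$ I would write the indicator of $\{\fpt_c\in\dd t,\ X_{\fpt_c-}\in\dd x,\ X_{\fpt_c}\in\dd y,\ \rsup X_{\fpt_c-}\in\dd w\}$ as $\sum_{s>0}\cf{\jp_s\ne0}\cf{s\in\dd t}\cf{X_{s-}\in\dd x}\cf{\rsup X_{s-}\in\dd w}\cf{\rsup X_{s-}\le c}\cf{X_{s-}+\jp_s\in\dd y}$ (the restriction $y>c$, together with $\rsup X_{s-}\le c$, already forces this jump to cross $c$), take expectations, and apply the displayed identity with $\phi(s,u)=\cf{s\in\dd t}\cf{X_{s-}\in\dd x}\cf{\rsup X_{s-}\in\dd w}\cf{\rsup X_{s-}\le c}\cf{X_{s-}+u\in\dd y}$, which is predictable in $s$ because $X_{s-}$ and $\rsup X_{s-}$ are left-continuous. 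This replaces the jump sum by $\dd t$ times $\mean[\cf{X_{t-}\in\dd x}\cf{\rsup X_{t-}\in\dd w}\cf{\rsup X_{t-}\le c}\int\cf{X_{t-}+u\in\dd y}\,\levym(\dd u)]$; on $\{X_{t-}\in\dd x\}$ the inner integral is $\levym(\dd y-x)$, and at the fixed time $t$ one has $X_{t-}=X_t$ and $\rsup X_{t-}=\rsup X_t$ almost surely since $X$ has no fixed-time discontinuity. Collecting terms yields \eqref{e:first-passage-3}, the remaining restrictions in the indicator being automatic because $\rsup X_t\ge X_t\vee0$ always.

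For part (b) I would split on whether $0$ is regular for $(0,\infty)$ under $\prob$. Since $\fpt_c$ is a stopping time, the strong Markov property makes $(X_{\fpt_c+h}-X_{\fpt_c})_{h\ge0}$ an independent copy of $X$, and because $X_{\fpt_c}=c\not>c$ the defining infimum forces this copy to take positive values at times arbitrarily close to $0$; hence on $\{X_{\fpt_c}=c,\ \fpt_c<\infty\}$ the point $0$ is regular for $(0,\infty)$. Therefore, if $0$ is \emph{not} regular for $(0,\infty)$, the event $\{X_{\fpt_c}=c,\ \fpt_c<\infty\}$ is $\prob$-null and there is nothing to prove. If $0$ \emph{is} regular for $(0,\infty)$, then $X$ cannot be a compound Poisson process with nonpositive drift, and one checks that in every remaining case the law of $X_{s-}$ at the time $s$ of a jump of size $\ge\delta$ (itself a stopping time) is atomless and independent of the size of that jump, whence $\pr{X_{s-}+\jp_s=c}=0$ for each such $s$. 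Since $\{X_{\fpt_c-}<X_{\fpt_c}=c\}$ entails $X_{\fpt_c-}+\jp_{\fpt_c}=c$ with $\fpt_c$ equal to the time of a jump of size $\ge\delta$ for every sufficiently small $\delta$, this event lies in a countable union of $\prob$-null sets, so $\pr{X_{\fpt_c-}<X_{\fpt_c}=c}=0$ again.

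I expect part (a) to be routine once the first-passage-by-a-jump event is read off the Poisson jump measure. The real obstacle is part (b) and its interplay with possible atoms of $\levym$: a jump \emph{can} land exactly on $c$ with positive probability (take $X$ a Poisson process and $c$ an integer), but precisely then $0$ is not regular for $(0,\infty)$, so that jump occurs strictly before first passage and $X_{\fpt_c}>c$; conversely, when $0$ is regular for $(0,\infty)$ the law of $X_{\fpt_c-}$ is atomless, which rules out a jump hitting $c$ exactly. Keeping these two mechanisms cleanly separated, and invoking the right regularity criterion, is the crux. For the spectrally positive strictly stable $X$ of the paper, which has infinite jump activity and $0$ regular for $(0,\infty)$, the law of $X_{s-}$ at every jump time is atomless, so part (b) is immediate.
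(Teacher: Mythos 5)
Your plan for part (a) coincides with the paper's: both express the event of a passage by a jump as a sum over jump times, apply the compensation formula, and use that at a fixed time $t$ one has $X_{t-}=X_t$ and $\rsup X_{t-}=\rsup X_t$ almost surely. (The paper phrases this by integrating against a nonnegative Borel test function $f(t,x,y,w)$, which is the rigorous version of your ``infinitesimal indicator'' $\phi$.) This part is correct.

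For part (b) the overall structure also matches: split on whether $0$ is regular for $(0,\infty)$, and in the irregular case derive $X_{\fpt_c}>c$ a.s.\ from the strong Markov property and the definition of the infimum. The divergence, and the gap, is in the regular case. The paper bounds $\cf{X_{\fpt_c}=c>X_{\fpt_c-}}\le\sum_{t:\jp_t>0}\cf{X_t=c,\,X_s<c\ \forall s<t}$ and applies the compensation formula, which turns the question into one about a \emph{fixed} time: one needs $\pr{X_t=c-z}=0$ for Lebesgue-a.e.\ $t$ and each $z>0$, which is routine once $X$ is not a driftless compound Poisson process. You instead argue that the law of $X_{s-}$ at the \emph{random} stopping time $s=\sigma^{(k)}_\delta$ (the $k$-th jump of size $\ge\delta$) is atomless, and you dismiss the verification with ``one checks that in every remaining case.'' That claim is not automatic: if $X$ is a compound Poisson process with positive drift then $0$ is regular for $(0,\infty)$, yet $X_t$ \emph{has} atoms at every fixed $t$. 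Establishing atomlessness of $X_{\sigma^{(k)}_\delta-}$ then requires decomposing $X=Y+N$ (with $N$ the jumps $\ge\delta$), writing $X_{\sigma^{(k)}_\delta-}=Y_{\sigma^{(k)}_\delta}+N_{\sigma^{(k)}_\delta-}$, and integrating the atom locations of $Y_t$ against the continuous Gamma law of $\sigma^{(k)}_\delta$ --- a multi-case argument you have not supplied. The paper's route through the compensation formula to a fixed-time statement sidesteps exactly this complication, which is why its reduction is preferable; as written, your ``one checks'' step is a genuine gap, though it can be filled.
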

\begin{remark}
  Part b) is known when $X$ is strictly stable with index $\alpha>1$
  (\cite{bertoin:96:cup}, Proposition VIII.8).
\end{remark}
\begin{proof}
  \ref{i:fp}
  The proof is standard so we only give a sketch of it (cf.\ \cite
  {bertoin:96:cup}, p.~76).  Given a Borel function  $f(t, x, y, w)\ge
  0$, $f(\fpt_c, X_{\fpt_c-}, X_{\fpt_c}, \rsup X_{\fpt_c-})
  \cf{X_{\fpt_c} > c} = \sum_{t: \jp_t\ne0} H_t(\jp_t)$, where $H_t(z)
  = f(t, X_{t-}, X_{t-}+z, \rsup X_{t-}) \cf{z> c - X_{t-}\ge 0, \rsup
    X_{t-}\le c}$.  Then by the compensation formula (\cite
  {bertoin:96:cup}, p.~7),
  \begin{align*} %\label{e:compensate}
    \int f(t, x, y, w) \cf{y>c}
    \pr{\fpt_c\in\dd t, X_{\fpt_c-}\in\dd x, X_{\fpt_c} \in
      \dd y, \rsup X_{\fpt_c-}\in\dd w}
    =
    \int \mean [H_t(z)]\,\dd t\,\levym(\dd z).
  \end{align*}
  However, $\mean[H_t(z)] = \int f(t, x, x+z, w) \cf{z>c-x\ge 0, x\vee
    0\le w\le c} \pr{X_t\in\dd x, \rsup X_t\in\dd w}$.  Plug the
  equation into the right hand side (\rhs)~of the display.  Since $f$
  is arbitrary, by comparing he integrals on both sides,
  \eqref {e:first-passage-3} follows.

  \ref{i:fp2}  If 0 is not regular for $(0, \infty)$, then by the
  strong Markov property of $X$, there is a random $\rx>0$, such that
  $X_t\le X_{\fpt_c}$ for $t\in (\fpt_c, \fpt_c+\rx)$, implying
  $X_{\fpt_c} > c$.  Now suppose 0 is regular for $(0,\infty)$.  If
  $X_{\fpt_c}=c$, then $\fpt_c \ge \tau := \inf\{t: X_t=c,\,
  X_s<c\,\forall s<t\}$.  However, by the regularity of 0 and strong
  Markov property, $X_{t_n}>X_\tau=c$ for an infinite sequence $t_n
  \dto\tau$, implying $\fpt_c =\tau$.  Then  $\cf{X_{\fpt_c} = c >
    X_{\fpt_c-}} \le \sum_{t:\jp_t>0} \cf{X_t = c, X_s<c\,\forall
    s<t}$.  Then by following the argument for Proposition III.2(ii)
  in \cite {bertoin:96:cup} and noting that $X$ is not compound
  Poisson, the claim follows.
\end{proof}

In the next preliminary result, denote $\rsup \levym(x) =
\levym((x,\infty))$. 
\begin{prop} \label{p:fp-df}
  Suppose $\rsup\levym(0)>0$ and each $X_t$ has a \pdf.  Fix
  $c>0$ and define
  \begin{align} \label{e:first-passage-df}
    v_c(x) = \intzi h_c(x,t)\,\dd t,
  \end{align}
  where $h_c(x,t)$ is as in \eqref{e:hc}.  Let $D_c = \{\jp_{\fpt_c} >
  0\}$, i.e.\ the event that $X$ has a jump at the first passage at
  level $c$.
  \begin{thmenum}
  \item \label{i:v}
    $v_c(x)<\infty$ for a.e.\ $x\le c$ (in Lebesgue measure). 

  \item \label{i:ind}
    Conditionally on $D_c$, $X_{\fpt_c-}$ is concentrated on $\Omega_c
    = \{x\le c: \rsup\levym(c-x) v_c(x)>0\}$.  Moreover, conditionally
    on $D_c$ and $X_{\fpt_c-}=x\in\Omega_c$,  $(\fpt_c, \rsup
    X_{\fpt_c-})$ and $\jp_{\fpt_c}$ are independent, such that
    \begin{align*}
      \pr{\jp_{\fpt_c}\in\dd z\gv D_c, X_{\fpt_c-}=x}
      &=
        \frac{\cf{z>c-x}\levym(\dd z)}{\rsup\levym(c-x)},
      \\
      \pr{\fpt_c\in\dd t\gv D_c, X_{\fpt_c-}=x}
      &=
        \frac{h_c(x,t)}{v_c(x)},
    \end{align*}
    and for $w\in [x\vee 0, c]$
    \begin{gather*}
      \pr{\rsup X_{\fpt_c-}\le w\gv
        \fpt_c=t, D_c, X_{\fpt_c-}=x}
      =
      \frac{h_w(x,t)}{h_c(x,t)}.
    \end{gather*}
  \end{thmenum}
\end{prop}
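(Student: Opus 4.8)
The plan is to obtain both parts by specializing Proposition~\ref{p:fp}\ref{i:fp}, re-parametrizing, and marginalizing. Since $\jp_{\fpt_c}>0$ exactly when $X_{\fpt_c}>X_{\fpt_c-}$, and, by Proposition~\ref{p:fp}\ref{i:fp2}, the value $X_{\fpt_c}=c$ cannot occur together with $X_{\fpt_c-}<X_{\fpt_c}$, the event $D_c$ agrees up to a null set with $\{X_{\fpt_c}>c\}$; hence in \eqref{e:first-passage-3} the restriction $y>c$ is the same as restricting to $D_c$. Writing $z=y-x=\jp_{\fpt_c}$ for the jump size, \eqref{e:first-passage-3} becomes, on $D_c$,
\begin{align*}
  \pr{\fpt_c\in\dd t,\ X_{\fpt_c-}\in\dd x,\ \jp_{\fpt_c}\in\dd z,\ \rsup X_{\fpt_c-}\in\dd w}
  =
  \cf{z>c-x}\,\levym(\dd z)\cdot
  \cf{x\vee0\le w\le c}\,\dd t\,\pr{X_t\in\dd x,\ \rsup X_t\in\dd w}.
\end{align*}
Integrating out $z$ (contributing $\rsup\levym(c-x)$) and then $w$ (contributing $\pr{X_t\in\dd x,\rsup X_t\le c}=h_c(x,t)\,\dd x$) gives $\pr{\fpt_c\in\dd t,X_{\fpt_c-}\in\dd x,D_c}=\rsup\levym(c-x)h_c(x,t)\,\dd t\,\dd x$, and integrating this over $t$, with $v_c$ as in \eqref{e:first-passage-df}, gives $\pr{X_{\fpt_c-}\in\dd x,D_c}=\rsup\levym(c-x)v_c(x)\,\dd x$.

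For part~\ref{i:v}, the last identity says that $\rsup\levym(c-x)v_c(x)$, as a function of $x$, is a sub-probability density, hence finite for a.e.\ $x$; since $0<\rsup\levym(c-x)<\infty$ whenever $0<c-x$ is strictly below the right endpoint of $\mathrm{supp}\,\levym$, this already gives $v_c(x)<\infty$ for a.e.\ such $x$, in particular for a.e.\ $x<c$ when $\levym$ has unbounded support (the case of the stable process). On the remaining set $\{x<c:\rsup\levym(c-x)=0\}$ --- an interval bounded away from $c$ --- one needs separately that the $0$-potential density of $X$ killed on leaving $(-\infty,c)$ is a.e.\ finite; I would get this from absolute continuity of the semigroup by passing to the killed $q$-resolvent, which is dominated by the $q$-resolvent density of $X$ whose mass is $1/q<\infty$, and then letting $q\dto0$.

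For part~\ref{i:ind}, the key observation is that the displayed joint law factors, with $x$ as a parameter, into a measure in $z$ times a measure in $(t,w)$. Dividing by the marginal $\pr{X_{\fpt_c-}\in\dd x,D_c}=\rsup\levym(c-x)v_c(x)\,\dd x$ shows that, conditionally on $D_c$ and $X_{\fpt_c-}=x$ (which forces $x\in\Omega_c$), $\jp_{\fpt_c}$ and $(\fpt_c,\rsup X_{\fpt_c-})$ are independent, with $\jp_{\fpt_c}$ distributed as $\cf{z>c-x}\levym(\dd z)/\rsup\levym(c-x)$ and $(\fpt_c,\rsup X_{\fpt_c-})$ as $\cf{x\vee0\le w\le c}\,\dd t\,\pr{X_t\in\dd x,\rsup X_t\in\dd w}$ normalized by $v_c(x)\,\dd x$. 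Integrating the latter over all admissible $w$ gives the asserted conditional density $h_c(x,t)/v_c(x)$ of $\fpt_c$; integrating in $w$ only up to $w_0\in[x\vee0,c]$ and dividing by this density gives $\pr{\rsup X_{\fpt_c-}\le w_0\gv\fpt_c=t,D_c,X_{\fpt_c-}=x}=h_{w_0}(x,t)/h_c(x,t)$, since $\pr{X_t\in\dd x,\rsup X_t\le w_0}=h_{w_0}(x,t)\,\dd x$.

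All ``conditional'' statements above are to be read, as in the remark after Corollary~\ref{c:pre-max}, as identities between a joint probability density and a product of the indicated densities, so that the manipulations are just Fubini and disintegration of explicit measures. The only step that is not pure bookkeeping is the a.e.\ finiteness in part~\ref{i:v} over the region where $\rsup\levym(c-x)=0$; this region is empty whenever $\levym$ has unbounded support, so it plays no role in the applications of this paper, and I expect the general case to need only the short resolvent argument indicated above.
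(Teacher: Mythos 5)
Your treatment of part~\ref{i:ind} is essentially the paper's proof: re-parametrize Proposition~\ref{p:fp}\ref{i:fp} via $z=y-x$, use part~\ref{i:fp2} to identify $D_c$ with $\{X_{\fpt_c}>c\}$, integrate out marginals, and disintegrate. No meaningful difference there.

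Part~\ref{i:v} is where you diverge, and the comparison is worth noting. Your observation that $\rsup\levym(c-x)\,v_c(x)$ is a sub-probability density in $x$ (being the density of $X_{\fpt_c-}$ on $D_c$), hence a.e.\ finite, is a genuinely shorter route than the paper's. It immediately gives $v_c(x)<\infty$ for a.e.\ $x$ with $\rsup\levym(c-x)>0$, which is all of $(-\infty,c)$ when $\levym$ has unbounded support (in particular the stable case), and is also exactly what part~\ref{i:ind} needs since there one only conditions on $x\in\Omega_c$. The paper, by contrast, proves the a.e.\ finiteness on all of $(-\infty,c]$ for an arbitrary \levy measure: it splits into $X$ transient (where $\int\pr{a\le X_t\le b}\,\dd t<\infty$ directly) and $X$ recurrent (where it factors $\pr{\rsup X_\tau\le c, X_\tau\in[a,b]}$ via the Wiener--Hopf independence of $\rsup X_\tau$ and $X_\tau-\rsup X_\tau$, and lets $r\dto0$ to bound $\int^b_a v_c$ by a product of ascending and descending ladder-height renewal measures, which are locally finite because ladder processes are transient).

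Your sketched repair for the remaining region $\{x: \rsup\levym(c-x)=0\}$ does not go through as stated: you dominate the killed $q$-resolvent by the $q$-resolvent density of $X$, whose total mass is $1/q$, and then let $q\dto0$. That bound is vacuous in the limit, and indeed for recurrent $X$ the unkilled $0$-potential $\intzi g_t(x)\,\dd t$ is infinite, so domination by the full resolvent cannot give what you want. Some input about the killed process itself is needed, which is precisely what the paper's ladder-process argument supplies. So: part~\ref{i:ind} matches the paper; your part~\ref{i:v} is a cleaner argument that suffices for the stable case and for all uses in the paper, but it does not prove the proposition in the generality in which it is stated, and the suggested fix for the general case would fail.
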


\begin{proof}
  \ref{i:v} 
  Fix $-\infty < a < b\le c$.  By Fubini theorem
  \begin{align*}
    \int^b_a v_c(x)\,\dd x
    \le
    \int^b_a \,\dd x\intzi
    \frac{\pr{X_t\in \dd x}}{\dd x}\,\dd t
    =
    \intzi \pr{a\le X_t\le b}\,\dd t.
  \end{align*}
  By definition, if $X$ is transient, then the last integral is finite
  (\cite {bertoin:96:cup}. p.~32) and so $\int^b_a v_c<\infty$.  Since
  $a$ and $b$ are arbitrary, $v_c(x)<\infty$ for a.e.\ $x<c$.  If $X$
  is not transient, then it is recurrent, so $\rsup X_t\toi$ and
  $\rinf X_t \to-\infty$ \as (\cite {bertoin:96:cup}, p.~167--168). 
  Given $r>0$, let $\tau$ be an exponentially distributed random
  variable with mean $1/r$ and independent of $X$.  Then
  \begin{align*}
    \int^b_a \dd x\intzi e^{-r t} h_c(x,t)\,\dd t
    &=
    \intzi e^{-r t} \,\dd t\int^b_a\pr{X_t\in\dd x, \rsup X_t\le c}
    =
    r^{-1}\pr{\rsup X_\tau \le c, X_\tau\in [a,b]}
    \\
    &\stackrel{(*)}{=}
    r^{-1}
    \int \cf{0\le s\le c, y\ge0, a\le s-y\le b} \pr{\rsup X_\tau\in\dd
      s} \pr{-\rinf X_\tau \in \dd y}
    \\
    &\le
    r^{-1}
    \pr{\rsup X_\tau \in [0,c]} \pr{-\rinf X_\tau\in [(-b)\vee 0,
      c-a]},
  \end{align*}
  where $(*)$ is due to $\rsup X_\tau$ and $X_\tau-\rsup X_\tau\sim
  \rinf X_\tau$ being independent (\cite{bertoin:96:cup}, Theorem VI.5
  and Proposition VI.3).  As in the proof of Theorem VI.20 in \cite
  {bertoin:96:cup} or Theorem 3 in \cite{doney:06:aap}, let $r\dto
  0$.  By monotone convergence, $\int^b_a v_c\le \Cal U([0,c])
  \, \dual{\Cal U}([(-b)\vee 0, c-a])$, where $\Cal U$ (resp.\ $\dual
  {\Cal U}$) is the renewal measure of the ascending (resp.\
  descending) ladder height process of $X$.  Since both ladder
  processes are transient, the \rhs is finite, again yielding
  $v_c(x)<\infty$ for a.e.\ $x$.

  \ref{i:ind}
  By Proposition \ref{p:fp}\ref{i:fp2}, for $t>0$, $x\le c$, $x\vee
  0\le w\le c$, and $z>0$,
  \begin{align*} %\label{e:first-passage-df0}
    &\pr{\fpt_c\in\dd t, X_{\fpt_c-}\in\dd x, \rsup X_{\fpt_c-}\le
      w, \jp_{\fpt_c}\in \dd z}
    =
    \cf{z>c-x}\dd t \,h_w(x,t) \,\dd x \,\levym(\dd z).
  \end{align*}
  Integrating over $t$ and $z$ yields $\pr{X_{\fpt_c-}\in\dd x, \rsup
    X_{\fpt_c-}\le w, D_c} = \rsup\levym(c-x) v_w(x)\, \dd x$.  In
  particular, letting $w=c$ gives $\pr{X_{\fpt_c-}\in\dd x,
    \jp_{\fpt_c}>0} = \rsup\levym(c-x)v_c(x)\, \dd x$.  This shows
  that conditionally on $D_c$, $X_{\fpt_c-}$ is concentrated on
  $\Omega_c$ and, together last display, also shows that for
  $x\in\Omega_c$,
  \begin{align*}
    &
    \pr{
      \fpt_c\in\dd t, \rsup X_{\fpt_c-}\le w, 
      \jp_{\fpt_c}\in\dd z\gv X_{\fpt_c-}\in\dd x, \jp_{\fpt_c}>0
    }
    \\
    &=
    \frac{h_w(x,t)\,\dd t}{h_c(x,t)}\times
    \frac{h_c(x,t)}{v_c(x)}\times
    \frac{\cf{z>c-x}\levym(\dd z)}{\rsup\levym(c-x)}.
  \end{align*}
  Then the rest of the claim easily follows.
\end{proof}

\subsection{The spectrally positive case}
Let $X$ be a spectrally positive \levy process that is not a
subordinator.  Then single points are not essentially polar for $X$,
whether the process has bounded variation (\cite{sato:99:cup}, Theorem
43.13) or not (\cite{bertoin:96:cup}, Corollary VII.5).  From
potential theory (\cite {bertoin:96:cup}, Section II.5), it follows
that $X$ has a bounded $q$-coexcessive version of resolvent density
$u^q(x)$ that satisfies
\begin{align} \label{e:u-fht}
  u^q(x)=\mean [e^{-q\fht_x}] u^q(0)
\end{align}
for $q>0$ and $x\in\Reals$, and if for every $t>0$, $X_t$ has a \pdf
$g_t$, then
\begin{align} \label{e:coexcessive}
  u^q(x) = \intzi e^{-q t} g_t(x)\,\dd t,
\end{align}
which can be extended to $q=0$ when $X$ is transient.  It will always
be assumed that $g_t$ is the unique version of \pdf that satisfies
\begin{align*}
  \int g_s(y) g_t(x-y)\,\dd y = g_{s+t}(x)
\end{align*}
for all $x$, $y\in\Reals$ and $s$, $t>0$.  Eq.~\eqref {e:coexcessive}
is stated in Remark 41.20 of \cite{sato:99:cup} under the assumption
that $g_t$ is bounded and continuous.  It is probably known that
\eqref {e:coexcessive} holds in general.  However, we could not find
an explicit proof in literature, so for convenience, one is given in
the Appendix.

To evaluate $h_c(x,t)$ defined in \eqref{e:hc} for stable processes,
the following result will be used.
\begin{prop} \label{p:pre-passage}
  Suppose that each $X_t$ has a \pdf $g_t$.  Then given
  $x<c$,
  \begin{align} \label{e:pre-passage}
    h_c(x,t)
    &= g_t(x) - \int^t_0 f_c(s) g_{t-s}(x-c)\,\dd s
    \\\label{e:pre-passage2}
    &= g_t(x) - \int^t_0 f_{x-c}(s) g_{t-s}(c)\,\dd s.
  \end{align}
  Furthermore, if $x>0$, then $h_c(-x,\cdot)$ is the convolution of
  $h_c(0, \cdot)$ and $f_{-x}$, i.e.,
  \begin{align} \label{e:first-passage-conv}
    h_c(-x, \cdot) = h_c(0, \cdot) * f_{-x}.
  \end{align}
\end{prop}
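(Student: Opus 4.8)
The plan is to establish \eqref{e:pre-passage} directly from a first-passage decomposition, and then to deduce \eqref{e:pre-passage2} and \eqref{e:first-passage-conv} from \eqref{e:pre-passage} by passing to Laplace transforms in $t$ and invoking \eqref{e:u-fht}, \eqref{e:coexcessive}, and the fact that $(\fht_{-y})_{y\ge0}$ is a subordinator.

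For \eqref{e:pre-passage}, I would start from the trivial splitting of $\pr{X_t\in\dd x}$ according to whether the running supremum has already exceeded $c$: for $x<c$,
\[
  g_t(x)=h_c(x,t)+\frac{\pr{X_t\in\dd x,\ \rsup X_t>c}}{\dd x}.
\]
The key observation is that, because $X$ has no negative jumps, a path with $X_t=x<c$ and $\rsup X_t>c$ must descend from a value above $c$ to a value below $c$ and therefore takes the value $c$ at some time in $(0,t]$; conversely, since $X$ is not a subordinator, upon reaching $c$ it a.s.\ strictly exceeds $c$ immediately afterwards, so that $\fht_c<t$ forces $\rsup X_t>c$. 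Hence, up to a $\prob$-null set, $\{X_t\in\dd x,\ \rsup X_t>c\}=\{X_t\in\dd x,\ \fht_c<t\}$ (the boundary events $\{\rsup X_t=c\}$ and $\{\fht_c=t\}$ being $\prob$-null). Since $\fht_c$ is a stopping time at which $X$ sits exactly at $c$, applying the strong Markov property at $\fht_c$ gives
\[
  \frac{\pr{X_t\in\dd x,\ \fht_c<t}}{\dd x}=\int_0^t f_c(s)\,g_{t-s}(x-c)\,\dd s,
\]
and substituting this into the display above yields \eqref{e:pre-passage}.

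Next I would take Laplace transforms in $t$. Writing $\widehat\psi(q)=\int_0^\infty e^{-qt}\psi(t)\,\dd t$ for $q>0$, one has $\widehat{g_\cdot(y)}(q)=u^q(y)$ by \eqref{e:coexcessive} and $\widehat{f_y}(q)=u^q(y)/u^q(0)$ by \eqref{e:u-fht}, for every $y\in\Reals$, so \eqref{e:pre-passage} reads $\widehat{h_c(x,\cdot)}(q)=u^q(x)-u^q(c)\,u^q(x-c)/u^q(0)$. The middle term can be inverted in two ways, either as $\widehat{f_c}(q)\,u^q(x-c)=\widehat{f_c*g_\cdot(x-c)}(q)$ (recovering \eqref{e:pre-passage}) or as $u^q(c)\,\widehat{f_{x-c}}(q)=\widehat{f_{x-c}*g_\cdot(c)}(q)$, and the second reading is \eqref{e:pre-passage2} (which may alternatively be obtained from the decomposition above via the duality identity $(\rsup X_t,X_t)\sim(X_t-\rinf X_t,X_t)$, rewriting the exceedance event as $\{X_t\in\dd x,\ \rinf X_t<x-c\}=\{X_t\in\dd x,\ \fht_{x-c}<t\}$ and applying the strong Markov property at $\fht_{x-c}$, where $X_{\fht_{x-c}}=x-c$). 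For \eqref{e:first-passage-conv}, take $x>0$; since $(\fht_{-y})_{y\ge0}$ is a subordinator, $\widehat{f_{-x-c}}(q)=\widehat{f_{-x}}(q)\,\widehat{f_{-c}}(q)$, that is $u^q(-x-c)\,u^q(0)=u^q(-x)\,u^q(-c)$, so writing the Laplace form of \eqref{e:pre-passage} at $-x$ and then at $0$,
\[
  \widehat{h_c(-x,\cdot)}(q)=u^q(-x)-\frac{u^q(c)\,u^q(-x-c)}{u^q(0)}=\frac{u^q(-x)}{u^q(0)}\Bigl(u^q(0)-\frac{u^q(c)\,u^q(-c)}{u^q(0)}\Bigr)=\widehat{f_{-x}}(q)\,\widehat{h_c(0,\cdot)}(q),
\]
and inverting the Laplace transform gives \eqref{e:first-passage-conv}.

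The step I expect to be the crux is the event identity $\{X_t\in\dd x,\ \rsup X_t>c\}=\{X_t\in\dd x,\ \fht_c<t\}$ modulo $\prob$-null sets: this is exactly where the absence of negative jumps is used, and it calls for some care with path regularity — that $X$ crosses $c$ continuously on the way down, that $\fht_c$ is a genuine stopping time with $X_{\fht_c}=c$ (so there is no overshoot and hence no extra term in the strong Markov step), and that the boundary events are negligible. Everything after that — the two Laplace-transform identities — is routine once \eqref{e:u-fht}, \eqref{e:coexcessive}, and the subordinator property of $(\fht_{-y})_{y\ge0}$ are in hand.
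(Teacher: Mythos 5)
Your proposal follows essentially the same route as the paper: establish \eqref{e:pre-passage} from the event identity $\{X_t\in A,\ \rsup X_t>c\}=\{X_t\in A,\ \fht_c<t\}$ (a.s., for $A\subset(-\infty,c)$) and the strong Markov property at $\fht_c$, then obtain the other two equalities via Laplace transforms in $t$ using \eqref{e:u-fht}, \eqref{e:coexcessive}, and $f_{-x-c}=f_{-x}*f_{-c}$. The one genuine difference is that the paper simply cites Michna et al.\ and Peskir for \eqref{e:pre-passage2}, whereas you give two self-contained derivations (Laplace factorization and duality/time reversal at $\fht_{x-c}$), both of which are correct; this is a modest improvement in completeness.

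The one place I would push back is your justification of the implication $\fht_c<t\Rightarrow\rsup X_t>c$: you appeal to ``upon reaching $c$ it a.s.\ strictly exceeds $c$ immediately afterwards,'' which amounts to $0$ being regular for $(0,\infty)$. That can fail for a spectrally positive non-subordinator $X$ of bounded variation (there the drift must be negative, so the post-$\fht_c$ path drifts \emph{below} $c$ immediately). The proposition is stated in this generality, so that justification has a gap. The correct mechanism is that $X$ does not creep upward: $\fpt_c<\fht_c$ a.s.\ and $X_{\fpt_c}>c$ on $\{\fpt_c<\infty\}$, hence $\fht_c<t$ forces $\fpt_c<t$ and $\rsup X_t\ge X_{\fpt_c}>c$. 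This is exactly what the paper invokes via Bertoin's Prop.~VIII.8(ii). In the strictly stable case $\alpha\in(1,2)$ the two arguments coincide, but at the level of generality of the statement the no-creeping argument is the one that works. Everything else in the proposal is sound.
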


\begin{proof}
  Since $X$ has no negative jumps and $\fht_c > \fpt_c$ \as (\cite
  {bertoin:96:cup}, Proposition VIII.8(ii)), for each $A\subset
  (-\infty,c)$, $\cf{X_t\in A, \rsup X_t>c} =\cf{X_t\in A, \fht_c<t}$ 
  \as.  Then by the strong Markov property of $X$, for any bounded
  continuous function $k(x)\ge 0$ with support in $(-\infty, c)$,
  \begin{multline*}
    \mean[k(X_t) \cf{\rsup X_t>c}]
    =
    \mean[k(X_t) \cf{\fht_c<t}]
    =\int^t_0 \mean[k(X_{t-s}+c)]\pr{\fht_c\in\dd s}
    \\
    =\int^t_0 
    \Sbr{\int k(x+c) g_{t-s}(x)\,\dd x} f_c(s)\,\dd s
    =
    \int k(x) \Sbr{\int^t_0 f_c(s)g_{t-s}(x-c)\,\dd s}\,\dd x.
  \end{multline*}
  On the other hand,
  \begin{align*}
    \int k(x) h_c(x,t)\,\dd x
    =
    \mean[k(X_t) \cf{\rsup X_t\le c}]
    =
    \mean[k(X_t)]
    -
    \mean[k(X_t)\cf{\rsup X_t>c}].
  \end{align*}
  The two displays together with $\mean[k(X_t)] = \int k(x) g_t(x)
  \,\dd x$ give \eqref{e:pre-passage}.  Eq.~\eqref {e:pre-passage2} is
  essentially shown on p.~4/10 of \cite {michna:15:ecp} (also see
  \cite{peskir:08:ecp}).  Given $x>0$, write $m_{-x}(t) = g_t(-x)$ as
  a function of $t$ and $\LT{m_{-x}}$ its Laplace transform.  By
  \eqref {e:u-fht} and \eqref {e:coexcessive}, $\LT{m_{-x}} = u^q(-x)
  = \LT{f_{-x}} u^q(0) = \LT{f_{-x}} \LT{m_0}$.  Then $m_{-x} =
  m_0*f_{-x}$.  Also, $f_{-x-c} = f_{-c}*f_{-x}$.  Plugging the two
  equations into \eqref{e:pre-passage2} yields \eqref
  {e:first-passage-conv}.
\end{proof}

\subsection{Preliminaries on stable processes}
From now on let $X$ be a spectrally positive and strictly stable
process with index $\alpha\in(1,2)$ satisfying \eqref
{e:stable-normal}.  Then the \levy measure of $X$ is
\begin{align} \label{e:Levy-stable}
  \levym(\dd x) = \frac{\cf{x>0} x^{-\alpha-1}}{\Gamma(-\alpha)}
  \,\dd x
\end{align}
(\cite{feller:71:jws}, p.~570).  By scaling and \cite
{sato:99:cup}, p.~88, $g_t$ has power series expansion on $\Reals$,
\begin{align} \label{e:pdf-g}
  g_t(x) 
  = t^{-\ra} g_1(t^{-\ra} x)
  =
  \nth{\alpha\pi}
  \sumoi k \frac{\Gamma(k\da)}{(k-1)!} \sin(k\pi\da)
  t^{-k\da} x^{k-1}.
\end{align}
By \cite{bertoin:96:cup}, Theorem VII.1, $(\fht_{-x})_{x\ge 0}$ is
strictly stable with index $\ra$, such that 
\begin{align} \label{e:T_x}
  \mean (e^{-q \fht_{-x}})
  =
  \exp(-xq^\ra), \quad q \ge 0.
\end{align}
By scaling and \cite{sato:99:cup}, p.~88, or by Kendall's identity,
for $x>0$ and $t>0$,
\begin{align} \label{e:pdf-f}
  f_{-x}(t) = x^{-\alpha} f_{-1}(x^{-\alpha} t)
  =
  \nth\pi \sumoi k (-1)^{k-1}
  \frac{\Gamma(k\da+1)}{k!} 
  \sin(\pi k\da) x^k t^{-k\da-1}.
\end{align}

From \eqref{e:pdf-g}, $g_t(x)$ as a function of $(x,t)$ can be extended
from $\Reals\times (0,\infty)$ to $\Coms\times (\Coms\setminus
(-\infty, 0])$, such that for each fixed $x\in \Coms$, the extension
is an analytic function of $t\in \Coms\setminus(-\infty,0]$, and for
each fixed $t\in \Coms\setminus (-\infty,0]$, it is an analytic
function of $x\in\Coms$.  By \eqref{e:pdf-f},  $f_{-x}(t)$ can be
similarly extended from $(0,\infty)\times (0,\infty)$ to $\Coms\times 
(\Coms\setminus (-\infty, 0])$.  However, the extension is not the
same as $f_{-x}(t)$ for $(x,t)\in (-\infty, 0)\times (0,\infty)$.
Indeed, for $x<0$ and $t>0$, the extension necessarily has the power
series expansion \eqref{e:pdf-f}.  On the other hand, for $x<0$, the
power series of $f_{-x}(t)$ is quite different (\cite{simon:11:sto},
Proposition 3).

Finally, for $s\in \Reals$ (\cite{uchaikin:99:vsp}, Section 5.6),
\begin{align} \label{e:moment-g}
  \intzi x^s g_1(x)\,\dd x = 
  \begin{cases}
    \displaystyle
    \frac{\Gamma(s) \Gamma(1-s\da)}{\Gamma(s(1-\ra))
      \Gamma(1-s(1-\ra))} & \text{if } s\in (-1, \alpha)
    \\
    \infty & \text{else}
  \end{cases}
\end{align}
and 
\begin{align} \label{e:moment-f}
  \intzi t^s f_{-1}(t)\,\dd t=
  \begin{cases}
    \displaystyle
    \frac{\Gamma(1-s\alpha)}{\Gamma(1-s)} & \text{if } s<\ra
    \\
    \infty & \text{else.}
  \end{cases}
\end{align}

\section{Proof of main results} \label{s:proof}
\subsection{Initial deduction by Laplace transform} \label{ss:laplace}
We need the following formulas from \cite{simon:11:sto}.  Given $x>0$,
\begin{align} \label{e:Simon}
  \mean(e^{-q \fht_x}) 
  =
  \mean(e^{-q x^\alpha \fht_1})
  =
  F_1(q^\ra x) - \alpha F'_\alpha(q^\ra x),
\end{align}
where $F_a(x) = E_a(x^a) := E_{a,1}(x^a)$ and for fixed $a>0$ and
$d\in\Coms$, the following function
of $z\in\Coms$
\begin{align*}
  E_{a,d}(z) = \sumzi n \frac{z^n}{\Gamma(a n +d)}
\end{align*}
is known as the Mittag-Leffler function.  Then $F_1(q^\ra x) =
E_1(q^\ra x) = e^{q^\ra x}$ and from
\begin{align*}
  F'_\alpha(z)
  =
  \Grp{\sumzi n \frac{z^{\alpha n}}{\Gamma(1+ \alpha n)}}'
  =
  \sumoi n \frac{z^{\alpha n-1}}{\Gamma(\alpha n)},
\end{align*}
it follows that
\begin{align*}
  \alpha F'_\alpha(q^\ra x)
  =
  \alpha \sumoi n
  \frac{q^{n-\ra} x^{\alpha n - 1}}{\Gamma(\alpha n)}.
\end{align*}

Fix $x<1$.  We seek the Laplace transform of $h_1(x,\cdot)$.  For
brevity, put 
\begin{align*}
  \hbar(t)= h_1(x, t).
\end{align*}
\begin{prop} \label{p:LT}
  For $q>0$,
  \begin{align}\label{e:LP-u}
    \LT\hbar(q)
    &=
    e^{-q^\ra(1-x)} \sumoi n \frac{q^{n-1}}{\Gamma(\alpha n)}
    -
    \sumoi n\frac{q^{n-1} (x\vee 0)^{\alpha n-1}}{\Gamma(\alpha n)}.
  \end{align}
\end{prop}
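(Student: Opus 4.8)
The plan is to pass to Laplace transforms in the convolution identity \eqref{e:pre-passage2} of Proposition~\ref{p:pre-passage}.  Writing $m_y(t):=g_t(y)$ as a function of $t$ and setting $c=1$, that identity reads
\begin{align*}
  \hbar(t)=h_1(x,t)
  &=g_t(x)-\int_0^t f_{x-1}(s)\,g_{t-s}(1)\,\dd s\\
  &=m_x(t)-(f_{x-1}*m_1)(t).
\end{align*}
Since $X$ has a bounded $q$-coexcessive resolvent density, $\LT{m_y}(q)=u^q(y)<\infty$ for every $y\in\Reals$ and $q>0$ by \eqref{e:coexcessive}, and $f_{x-1}$ is a proper probability density because $x-1<0$ and, by \eqref{e:T_x}, $\mean(e^{-q\fht_{x-1}})\to1$ as $q\dto0$; hence every Laplace transform below is finite and the convolution theorem applies, giving
\begin{align*}
  \LT{\hbar}(q)=u^q(x)-\LT{f_{x-1}}(q)\,u^q(1),\qquad q>0.
\end{align*}
It then remains to evaluate the three ingredients $u^q(x)$, $u^q(1)$, and $\LT{f_{x-1}}(q)$ in closed form.

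First I would pin down the normalizing constant $u^q(0)=q^{\ra-1}/\alpha$: by scaling $g_t(0)=t^{-\ra}g_1(0)$, the $k=1$ term of \eqref{e:pdf-g} gives $g_1(0)=\Gamma(\ra)\sin(\pi\ra)/(\alpha\pi)$, so $u^q(0)=g_1(0)\,\Gamma(1-\ra)\,q^{\ra-1}$, which collapses to $q^{\ra-1}/\alpha$ by Euler's reflection formula $\Gamma(\ra)\Gamma(1-\ra)=\pi/\sin(\pi\ra)$.  Next, \eqref{e:u-fht} gives $u^q(y)=\mean(e^{-q\fht_y})\,u^q(0)$: for $y\le0$, \eqref{e:T_x} yields $\mean(e^{-q\fht_y})=e^{yq^\ra}$, while for $y>0$ Simon's formula \eqref{e:Simon} yields $\mean(e^{-q\fht_y})=e^{q^\ra y}-\alpha\sumoi n q^{n-\ra}y^{\alpha n-1}/\Gamma(\alpha n)$.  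Absorbing the factor $q^{\ra-1}/\alpha$ into the series, both cases combine into the single formula
\begin{align*}
  u^q(y)=\frac{q^{\ra-1}}{\alpha}\,e^{yq^\ra}
  -\sumoi n\frac{q^{n-1}(y\vee0)^{\alpha n-1}}{\Gamma(\alpha n)},\qquad y\in\Reals,
\end{align*}
the series vanishing when $y\le0$ since $\alpha n-1>0$.  In particular $u^q(1)=\tfrac{q^{\ra-1}}{\alpha}e^{q^\ra}-\sumoi n q^{n-1}/\Gamma(\alpha n)$.  Finally, since $x-1<0$, \eqref{e:T_x} gives directly $\LT{f_{x-1}}(q)=\mean(e^{-q\fht_{x-1}})=e^{-(1-x)q^\ra}$.

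Substituting into $\LT{\hbar}(q)=u^q(x)-e^{-(1-x)q^\ra}u^q(1)$ and expanding, the one genuine step is an exact cancellation: the term $\tfrac{q^{\ra-1}}{\alpha}e^{xq^\ra}$ in $u^q(x)$ is cancelled by $e^{-(1-x)q^\ra}\cdot\tfrac{q^{\ra-1}}{\alpha}e^{q^\ra}=\tfrac{q^{\ra-1}}{\alpha}e^{xq^\ra}$ arising from $e^{-(1-x)q^\ra}u^q(1)$, and what survives is precisely
\begin{align*}
  \LT{\hbar}(q)=e^{-q^\ra(1-x)}\sumoi n\frac{q^{n-1}}{\Gamma(\alpha n)}
  -\sumoi n\frac{q^{n-1}(x\vee0)^{\alpha n-1}}{\Gamma(\alpha n)},
\end{align*}
which is \eqref{e:LP-u}.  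I do not expect a serious obstacle.  The points that warrant care are (i) the finiteness of the Laplace transforms, so that the convolution theorem is legitimate --- handled by the boundedness of $u^q$ together with $f_{x-1}$ being a probability density; and (ii) that Simon's formula \eqref{e:Simon} describes the hitting time of a \emph{strictly positive} level, hence is applied to $u^q(1)$ and, when $x\in(0,1)$, to $u^q(x)$, while \eqref{e:T_x} covers non-positive levels.  Once the three transforms are aligned, \eqref{e:LP-u} follows from the single cancellation above.
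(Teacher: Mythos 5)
Your proof is correct and follows essentially the same route as the paper: Laplace-transform the convolution identity of Proposition~\ref{p:pre-passage}, compute $u^q(0)$ via scaling, and then expand via \eqref{e:u-fht}, \eqref{e:T_x}, and \eqref{e:Simon} to obtain the cancellation. The only cosmetic difference is that you start from \eqref{e:pre-passage2} (yielding $u^q(x)-\LT{f_{x-1}}(q)\,u^q(1)$) while the paper starts from \eqref{e:pre-passage} (yielding $u^q(x)-\mean(e^{-q\fht_1})\,u^q(x-1)$), and you fold the two cases $x\le0$ and $x>0$ into a single formula for $u^q(y)$ using $y\vee0$; both reduce to the same expression via \eqref{e:u-fht}.
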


\begin{remark}
  By \eqref{e:first-passage-df}, $v_1(x) = \LT\hbar(0+)$, which
  together with \eqref{e:LP-u} yields  \eqref{e:v-stable}.
\end{remark}
\begin{proof}[Proof of Proposition \ref{p:LT}]
  By \eqref{e:pre-passage} in Proposition \ref{p:pre-passage}, the
  Laplace transform of $\hbar$ is
  \begin{align*}
    \LT\hbar(q)
    =\intzi e^{-q t} \Sbr{
    g_t(x) - \int^t_0 f_1(s) g_{t-s}(x-1)\,\dd s
    }\,\dd t
    &=
    u^q(x) - \mean(e^{-q \fht_1}) u^q(x-1).
  \end{align*}
  By \eqref{e:coexcessive}, scaling, and \eqref{e:pdf-g},
  \begin{align*}
    u^q(0) = \intzi e^{-qt} g_t(0)\,\dd t
    = g_1(0) \intzi e^{-q t} t^{-\ra}\,\dd t
    =  \alpha^{-1} q^{\ra-1}.
  \end{align*}
  Then by \eqref{e:u-fht},
  \begin{align}  \label{e:lt-h}
    \LT\hbar(q)=
    \alpha^{-1} q^{\ra-1}
    [\mean(e^{-q \fht_x}) - \mean(e^{-q \fht_{x-1}}) \mean(e^{-q
    \fht_1})].
  \end{align}
  Since $x-1<0$, by \eqref{e:T_x}, $\mean(e^{-q \fht_{x-1}}) =
  e^{(x-1) q^\ra}$.  If $x\le 0$, then $\mean(e^{-q
    \fht_x}) = e^{x q^\ra}$ as well, and so applying 
  \eqref{e:Simon} to $\mean(e^{-q\fht_1})$, 
  \begin{align*}
    \mean(e^{-q \fht_x}) - \mean(e^{-q \fht_{x-1}}) \mean(e^{-q\fht_1})
    &= 
    e^{q^\ra x} 
    -
    e^{q^\ra(x-1)}\Grp{
      e^{q^\ra} - \alpha\sumoi
      n\frac{q^{n-\ra}}{\Gamma(\alpha n)}
    }
    \\
    &=
    \alpha e^{q^\ra(x-1)} \sumoi n \frac{q^{n-\ra}}
    {\Gamma(\alpha n)}.
  \end{align*}
  On the other hand, if $x>0$, then applying \eqref{e:Simon} to both
  $\mean(e^{-q\fht_x})$ and $\mean(e^{-q\fht_1})$,
  \begin{align*}
    &
    \mean(e^{-q \fht_x}) - \mean(e^{-q \fht_{x-1}}) \mean(e^{-q \fht_1})
    \\
    &= 
    e^{q^\ra x} - \alpha \sumoi n \frac{q^{n-\ra} x^{\alpha
      n - 1}}{\Gamma(\alpha n)}
    -
    e^{q^\ra(x-1)}\Grp{
      e^{q^\ra} - \alpha \sumoi n \frac{q^{n-\ra}}
      {\Gamma(\alpha n)}
    }
    \\
    &=\alpha e^{q^\ra(x-1)} \sumoi n
    \frac{q^{n-\ra}}{\Gamma(\alpha n)}
    -
    \alpha \sumoi n \frac{q^{n-\ra} x^{\alpha
    n - 1}}{\Gamma(\alpha n)}.
  \end{align*}
  The above two identities for $\mean(e^{-q \fht_x}) - \mean(e^{-q
    \fht_{x-1}}) \mean(e^{-q \fht_1})$ combined with \eqref{e:lt-h}
  then lead to \eqref{e:LP-u}.
\end{proof}

Given $x<1$, $f_{x-1}$ belongs to $C^\infty_0([0,\infty))$, the family
of infinitely differentiable functions on $[0,\infty)$ with derivative
of any order equal to zero at $0$ and $\infty$.  Since $e^{-q^\ra
  (1-x)}$ is the Laplace transform of $f_{x-1}$, in view of
\eqref{e:LP-u} and the relationship between Laplace transform and
differentiation, if $x<0$, then it is possible to show \eqref
{e:h-series} by interchanging Laplace transform and the infinite
summation on the \rhs of \eqref {e:LP-u}.  However, as noted in the
introduction, for $x>0$ the approach fails to work.  In our proof,
\eqref{e:h-series} is first established for $x<a$ and $t>0$, where
$a<0$ is some constant.  The argument based on interchanging Laplace
transform and infinite summation is carried out.  Then the general
case is resolved by analytic extension.

\subsection{Proof of theorem}
Fixing $t>0$, regard $h_1(1-x,t)$ as a function of $x$.  We need a
preliminary estimate of the domain it can be analytically extended to.
Recall that a domain is a connected open set in $\Coms$. 
\begin{lemma} \label{l:analytic}
  Given $t>0$, the mapping $x\to h_1(1-x,t)$ can be analytically
  extended from $(0,\infty)$ to $\Omega :=\{z\in \Coms: |\arg z| <
  \pi/2 - \pi/(2\alpha)\}$.
\end{lemma}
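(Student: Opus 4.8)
The plan is to combine the identity \eqref{e:pre-passage2} with the fact that, for each fixed $s>0$, the stable hitting-time density $x\mapsto f_{-x}(s)$ is an \emph{entire} function of $x$: the power series in $x$ in \eqref{e:pdf-f} has infinite radius of convergence because $\Gamma(k/\alpha+1)/k!$ decays faster than any geometric rate. Applying \eqref{e:pre-passage2} with $c=1$ at the point $1-x<1$ gives, for $x>0$,
\begin{align*}
  h_1(1-x,t)=g_t(1-x)-\int_0^t f_{-x}(s)\,g_{t-s}(1)\,\dd s .
\end{align*}
Since $g_t(\cdot)$ likewise extends to an entire function, $x\mapsto g_t(1-x)$ is entire, and the lemma reduces to extending $I(x):=\int_0^t f_{-x}(s)g_{t-s}(1)\,\dd s$ holomorphically from $(0,\infty)$ to $\Omega$. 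For each fixed $s\in(0,t)$ the integrand is holomorphic in $x\in\Coms$, and $s\mapsto g_{t-s}(1)$ is continuous and bounded on $[0,t]$ (it vanishes as $s\uparrow t$, by the right tail of $g_1$); so by Morera's theorem and Fubini it suffices to show $\int_0^t\sup_{x\in K}|f_{-x}(s)|\,\dd s<\infty$ for every compact $K\subset\Omega$. Near $s=t$ the integrand is bounded; the only real issue is the behavior of $f_{-x}(s)$ as $s\downarrow0$, uniformly for $x\in K$.

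This is where the geometry of $\Omega$ enters. Writing the density via Laplace inversion of \eqref{e:T_x}, $f_{-x}(s)=\frac{1}{2\pi\iunit}\int_{\mathcal H}e^{qs-xq^{1/\alpha}}\,\dd q$ with $\mathcal H$ a Hankel contour around $(-\infty,0]$, and rescaling $q=p/s$, one gets $f_{-x}(s)=\frac1{2\pi\iunit s}\int_{\mathcal H}\exp\big(p-xs^{-1/\alpha}p^{1/\alpha}\big)\,\dd p$. I would run steepest descent in $p$: the phase has a single saddle $p_*$ with $\arg p_*=\tfrac{\alpha}{\alpha-1}\arg x$ (on the principal sheet whenever $|\arg x|<\tfrac{\alpha-1}{\alpha}\pi$, hence for all $x\in\Omega$) and critical value $-C_\alpha\,x^{\alpha/(\alpha-1)}\,s^{-1/(\alpha-1)}$ with $C_\alpha>0$. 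For $x\in\Omega$, $|\arg(x^{\alpha/(\alpha-1)})|=\tfrac{\alpha}{\alpha-1}|\arg x|<\tfrac{\alpha}{\alpha-1}\big(\tfrac\pi2-\tfrac{\pi}{2\alpha}\big)=\tfrac\pi2$, so $\Re\big(x^{\alpha/(\alpha-1)}\big)>0$, and thus on a compact $K\subset\Omega$ the critical value has real part $\le-c_K\,s^{-1/(\alpha-1)}$ with $c_K>0$. Since the quadratic expansion around $p_*$ contributes only a power of $s^{-1}$, this yields $|f_{-x}(s)|\le C_K\,s^{-\gamma}\exp(-c_K\,s^{-1/(\alpha-1)})$ for small $s$ and $x\in K$, which is more than enough for the integrability. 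The threshold $\tfrac\pi2-\tfrac{\pi}{2\alpha}$ is exactly the angle $\beta$ with $\tfrac{\alpha}{\alpha-1}\beta=\tfrac\pi2$, which is why the method reaches precisely $\Omega$ and no larger sector.

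I expect the steepest-descent step — deforming $\mathcal H$ through $p_*$, keeping the tails negligible, and controlling the expansion uniformly in $x\in K$ and in $s\downarrow0$ — to be the only genuine difficulty; one may instead quote the classical sectorial small-argument asymptotics of the one-sided $\tfrac1\alpha$-stable density (equivalently, of the Wright function $W_{-1/\alpha,0}$), but the content is the same. With the estimate in hand, $I$ extends holomorphically to $\Omega$ by Morera, and $h_1(1-x,t)=g_t(1-x)-I(x)$ is the required extension; everything else is bookkeeping with \eqref{e:pre-passage2}, the entirety of $x\mapsto f_{-x}(s)$ and $x\mapsto g_t(x)$, and dominated convergence.
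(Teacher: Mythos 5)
Your argument is correct, but it takes a genuinely different route from the paper's. You start from \eqref{e:pre-passage2}, reduce the lemma to the holomorphy in $x$ of $I(x)=\int_0^t f_{-x}(s)\,g_{t-s}(1)\,\dd s$, and then control the small-$s$ singularity by estimating $|f_{-x}(s)|$ directly through a saddle-point analysis (equivalently, the sectorial small-argument asymptotics of the one-sided $\nth\alpha$-stable density / Wright function), yielding $|f_{-x}(s)|\le C_K\,s^{-\gamma}\exp(-c_K s^{-1/(\alpha-1)})$ uniformly on compacts $K\subset\Omega$. The paper starts from \eqref{e:pre-passage} instead, applies Kendall's identity to rewrite the troublesome term as $x^{-1}\int_0^t s\,f_{-x}(s)\,f_1(t-s)\,\dd s$, writes $f_{-x}(s)$ via \emph{Fourier} inversion as $\nth{2\pi}\intii e^{-\iunit\lambda s-(-\iunit\lambda)^{\ra}x}\,\dd\lambda$, and then swaps the $s$- and $\lambda$-integrals by Fubini; all the $s$-dependence is absorbed into a bounded function $\psi(\lambda)=\int_0^t s\,e^{-\iunit\lambda s}f_1(t-s)\,\dd s$, so the only estimate needed is $\Re\bigl((-\iunit\lambda)^{\ra}z\bigr)\ge c\,|\lambda|^{\ra}$ on compacts of $\Omega$. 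Both paths pinpoint the same sector for the same geometric reason — in your version via $\bigl|\arg\bigl(x^{\alpha/(\alpha-1)}\bigr)\bigr|<\pi/2$, in the paper's via $|\arg z\pm\pi/(2\alpha)|<\pi/2$ — but the paper's interchange of integrals lets it avoid the uniform steepest-descent estimate on $f_{-x}(s)$ altogether, which is the one technically delicate step in your version (and which you rightly flag as the only genuine difficulty). Your approach is self-contained and makes the mechanism visible; the paper's is shorter once one is willing to invoke Fubini plus Morera.
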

\begin{proof}
  The following fact will be used.  Let $D\subset \Coms$ be a domain
  and $J\subset\Reals$.  Suppose $m(z,\lambda)$ is a measurable
  function on $D\times J$ and $\nu$ is a measure on $J$.  If $m(\cdot,
  \lambda)$ is analytic in $D$ for each $\lambda\in J$, and the
  mapping $z\to \int |m(z,\lambda)|\nu(\dd \lambda)$ is bounded on any
  compact subset of $D$, then by Fubini's theorem and Morera's
  theorem (\cite{rudin:87:mcgraw}, p.~208), $M(z)=\int m(z,
  \lambda)\nu(\dd \lambda)$ is analytic on $D$.

  Given $t>0$, by Proposition \ref{p:pre-passage},  $h_1(1-x, t) =
  g_t(1-x) - \int^t_0 f_1(t-s) g_{s}(-x) \,\dd s$.  Since $g_t$ can
  be analytically extended to $\Coms$, it suffices to show that $x\to
  \int^t_0 g_s(-x) f_1(t-s)\,\dd s$ can be analytically extended to
  $\Omega$.  By Kendall's identity
  \begin{align*}
    \int^t_0 g_s(-x) f_1(t-s)\,\dd s
    &=
      \nth x \int^t_0 s f_{-x}(s) f_1(t-s)\,\dd s.
  \end{align*}
  The Fourier transform of $f_{-x}$ is $\FT f_{-x}(\lambda) =
  \LT{f_{-x}}(-\iunit\lambda) = e^{-(-\iunit \lambda)^\ra x}$,
  $\lambda\in\Reals$, where $-\pi < \arg(-\iunit\lambda)\le
  \pi$.  Then $|\FT f_{-x}(\lambda)| = e^{-\Re(-\iunit
    \lambda)^\ra x} = e^{-|\lambda|^\ra x\cos a}$ with $a =
  \pi/(2\alpha)$.  As $\cos a >0$, Fourier inversion can be applied to
  get
  \begin{align*}
    f_{-x}(s)
    &=
    \nth{2\pi} \intii e^{-\iunit \lambda s  -
      (-\iunit\lambda)^\ra x}\, \dd \lambda
  \end{align*}
  and Fubini theorem can be applied to get
  \begin{align*}
    \int^t_0 g_s(-x) f_1(t-s)\,\dd s
    &=
    \nth {2\pi x} \intii \psi(\lambda) e^{-(-\iunit\lambda)^\ra
      x}\,\dd  \lambda,
  \end{align*}
  where $\psi(\lambda) = \int^t_0 s e^{-\iunit\lambda s} f_1(t-s)\,\dd
  s$ is bounded.  Given a compact $C\subset \Omega$, $\theta_0 :=
  \sup_{z\in C} |\arg(z)| < \pi/2 - a$ and $r_0 := \inf_{z\in
    C}|z|>0$.   For $z = r e^{\iunit\theta}\in C$, 
  $\Re((-\iunit \lambda)^\ra z) = \lambda^\ra r \cos(\theta \pm a)$,
  where the sign of $a$ is opposite to that of $\lambda$.  By $|\theta
  \pm a| \le \theta_0 + a < \pi/2$, $\Re((-\iunit \lambda)^\ra z) \ge
  c\lambda^\ra $ with $c = r_0\cos(\theta_0 + a)>0$.  Then $\intii
  \psi(\lambda) e^{-(-\iunit\lambda)^\ra z}\,\dd \lambda$ is bounded
  on $C$.  As remarked at the beginning, this yields the proof.
\end{proof}

\begin{lemma} \label{l:continuity}
  For every $x<1$ and $t>0$, the series in \eqref {e:h-power} and
  \eqref{e:h-series} converge absolutely and are equal to each other,
  and as functions of $(x,t)$ can be extended to $\Coms\times (\Coms
  \setminus (-\infty,0])$, such that for each fixed $x\in \Coms$, the
  extended function is analytic in $t\in \Coms\setminus(-\infty,0]$,
  and for each fixed $t\in \Coms\setminus (-\infty,0]$, the extended
  function is analytic in $x\in\Coms$.
\end{lemma}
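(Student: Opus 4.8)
The plan is to split the statement into an absolute-convergence part and an analyticity/equality part, and to bootstrap from the region $x<a<0$ where Laplace-transform inversion works directly, then propagate to all $x$ by analytic continuation. First I would verify absolute convergence of the double series in \eqref{e:h-power}: using $|\sin(\pi k\da)|\le 1$, Stirling's formula for $\Gamma(k\da+n)$, $\Gamma(\alpha n)$, and $k!$, and the fact that $1/\alpha\in(1/2,1)$, one checks that the general term is dominated by $C^{k+n}(k+n)!\,|1-x|^k|t|^{-k\da-n}/(k!\,(\alpha n)!)$-type bounds; summing first over $k$ for fixed $n$ (a convergent power series in $|1-x|/|t|^{1/\alpha}$, entire since $1/\alpha<1$ makes $\Gamma(k\da+n)/k!$ grow sub-factorially... more precisely the ratio test in $k$ gives radius $\infty$) and then over $n$ (again ratio test, radius $\infty$ in $1/t$) gives absolute convergence for every $x\in\Coms$ and $t\in\Coms\setminus(-\infty,0]$, and shows the sum defines a function analytic separately in each variable on that domain by the Morera/Fubini criterion already stated in the proof of Lemma \ref{l:analytic}. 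The same bookkeeping handles \eqref{e:h-series}: from \eqref{e:pdf-f}, $f^{(n)}_{x-1}(t)$ is obtained by termwise $n$-fold differentiation in $t$ of the series for $f_{x-1}$, which multiplies the $k$-th term by a falling factorial $(k\da+1)(k\da+2)\cdots(k\da+n)=\Gamma(k\da+n+1)/\Gamma(k\da+1)$ and shifts the power of $t$; dividing by $\Gamma(\alpha n+\alpha)$ and summing over $n\ge0$ then rearranges (justified by absolute convergence) into exactly \eqref{e:h-power}, so the two series are identically equal as formal manipulations on $\Coms\times(\Coms\setminus(-\infty,0])$.

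Next I would establish that this common analytic function agrees with $h_1(x,t)$ for real $x<1$, $t>0$. The anchor is the region $x<a$ for suitable $a<0$: there, as sketched in section \ref{ss:laplace}, the Laplace transform \eqref{e:LP-u} of $\hbar=h_1(x,\cdot)$ can be inverted termwise — since $e^{-q^\ra(1-x)}=\LT{f_{x-1}}(q)$ and multiplication by $q^{n}$ corresponds to $n$-fold differentiation of $f_{x-1}\in C^\infty_0([0,\infty))$, while the subtracted sum $\sum_n q^{n-1}(x\vee0)^{\alpha n-1}/\Gamma(\alpha n)$ vanishes because $x\vee0=0$ for $x<0$ — so that \eqref{e:h-series} holds pointwise for $x<a$. (The choice of $a$ is dictated by making the interchange of $\LT{\cdot}$ and $\sum_n$ rigorous, i.e.\ by a domination estimate on $\sum_n q^{n-1}\LT{f^{(n)}_{x-1}}(q)$ that requires $1-x$ large enough.) Thus on $\{x<a\}\times\{t>0\}$ we have $h_1(x,t)$ equal to the series. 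By Lemma \ref{l:analytic}, $x\mapsto h_1(1-x,t)$ is analytic on $\Omega=\{|\arg z|<\pi/2-\pi/(2\alpha)\}$, which contains the real interval $(0,\infty)$; and the series, as a function of $1-x$, is entire. Two analytic functions on the connected set (the relevant slit-plane or the strip $\Omega$ re-expressed in $x$) that agree on the open real ray $x<a$ must agree throughout, by the identity theorem. This gives $h_1(x,t)=\text{series}$ for all $x<1$, $t>0$, and simultaneously certifies the analytic extension in $x$ to $\Coms$.

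The analytic extension in $t$ is then immediate: for each fixed $x\in\Coms$ the series \eqref{e:h-power} is, as shown in step one, an absolutely convergent series of functions analytic in $t\in\Coms\setminus(-\infty,0]$, with local uniform bounds, hence analytic there; and separate analyticity in the two variables is exactly what is claimed. I expect the main obstacle to be the domination estimate underlying the termwise Laplace inversion on $\{x<a\}$ — one must control $\sum_{n\ge1} q^{n-1}\bigl|\LT{f^{(n)}_{x-1}}(q)\bigr|$ (equivalently, bound the derivatives $f^{(n)}_{x-1}$ or their transforms) uniformly enough to invoke dominated convergence or Fubini; this is where the asymptotics of $f^{(n)}_{x-1}$ from \cite{gawronski:88:ap}, or a direct contour/saddle-point bound via the representation $f_{-x}(s)=(2\pi)^{-1}\intii e^{-\iunit\lambda s-(-\iunit\lambda)^\ra x}\dd\lambda$ used in Lemma \ref{l:analytic}, will do the work, and the constant $a<0$ emerges from requiring the resulting geometric-type series in $n$ to converge. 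Everything after that anchor is soft analytic continuation.
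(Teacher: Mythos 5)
Your first paragraph is essentially the paper's proof of this lemma: the paper also first rewrites \eqref{e:h-series} via termwise differentiation of \eqref{e:pdf-f} to obtain \eqref{e:h-power}, and then establishes absolute convergence. The only real difference is the absolute-convergence estimate: the paper writes $\Gamma(k\da+n)=\intzi s^{k\da+n-1}e^{-s}\dd s$, interchanges sum and integral to obtain the bound $\intzi E_{\alpha,\alpha}(s)e^{s^\ra-s/M}\dd s<\infty$ (with $M=[1-(x\wedge 0)]t^{-1}$) using the Mittag-Leffler asymptotics from \cite{erdelyi:55:mcgraw}, whereas you invoke Stirling and the ratio test. Both routes work, though your sketched domination $C^{k+n}(k+n)!/(k!\,(\alpha n)!)$ is too crude to close the argument on its own (that ratio does not decay in $k$); what actually saves you, and what you do correctly state, is the ratio test showing $\Gamma(k\da+n)/k!$ decays super-exponentially in $k$ because $1/\alpha<1$. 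The paper's integral form sidesteps having to track the $n$-dependence after summing over $k$, which your two-stage ratio test glosses over, but the conclusion is the same.

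Your second and third paragraphs prove something Lemma~\ref{l:continuity} does not claim: the identification of the series with $h_1(x,t)$. That is precisely the content of Theorem~\ref{t:h}, and the paper proves it separately via Lemma~\ref{l:omega-x} (the boundedness of $\varsigma_x$ justifying the Laplace-transform/sum interchange for $x<1-x_0$, $x_0=1/\cos(\pi/(2\alpha))$), followed by the analytic-continuation argument using Lemma~\ref{l:analytic}. Your outline of that step is correct and matches the paper's strategy, but it is not needed here; the lemma is purely about the two series as formal objects, and your first paragraph already closes it.
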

\begin{proof}
  By \eqref{e:pdf-f}, the series in \eqref{e:h-series} is
  \begin{align*}
    &
    \sumoi n \frac{1}{\Gamma(\alpha n)}
    \frac{\dd^{n-1}}{\dd t^{n-1}}
    \Grp{
      \nth\pi
      \sumoi k
      (-1)^{k-1}
      \frac{\Gamma(k\da+1)}{k!} 
      \sin(\pi k\da) (1-x)^k t^{-k\da-1}
    }\\
    &=
    \nth\pi
    \sumoi n \frac{(-1)^{n-1}}{\Gamma(\alpha n)}
    \sumoi k
    (-1)^{k-1}
    \frac{\Gamma(k\da + n)}{k!} 
    \sin(\pi k\da) (1-x)^k t^{-k\da-n}.
  \end{align*}
  Then to show the entire lemma, it suffices to show that series on
  \eqref{e:h-power} converges absolutely.  Letting $M = [1-(x\wedge
  0)] t^{-1}$, the sum of the absolute values of the terms in the
  series is less than
  \begin{align}\nonumber
    &\hspace{-1cm}
    \sumoi{k,n}
    \frac{\Gamma(k\da + n)}{\Gamma(\alpha n) k!} M^{k\da + n}
    =
    \intzi \sumoi{k,n}
    \frac{s^{k\da + n-1}} {\Gamma(\alpha n)
      k!} e^{-s/M}\,\dd s
    \\\label{e:h-series-bound}
    &=
    \intzi \Grp{\sumoi n \frac{s^{n-1}}{\Gamma(\alpha n)}} 
    \Grp{\sumoi k\frac{s^{k\da}}{k!}} e^{-s/M}\,\dd s
    \le
    \intzi
    E_{\alpha,\alpha}(s) e^{s^\ra-s/M}\,\dd s.
  \end{align}
  From (22) on p.~210 of \cite{erdelyi:55:mcgraw}, as $s\toi$,
  $E_{\alpha, \alpha}(s) e^{s^\ra - s/M} = O(e^{2s^\ra-s/M})$.
  Therefore the last integral is finite, yielding the desired absolute
  convergence.
\end{proof}

Given $x>0$, denote
\begin{align*}
  h_x(t) = h_1(1-x,t)
\end{align*}
and regard it as a function of $t>0$.  A key ingredient of the proof
of Theorem \ref{t:h} is to show that $h_x(t)$ is identical to 
\begin{align} \label{e:u}
  \omega_x(t)
  =
  \sumzi n \frac{f\Sp n_{-x}(t)}
  {\Gamma(\alpha n + \alpha)}.
\end{align}
For this purpose the following lemmas are needed.
\begin{lemma} \label{l:basic}
  Given $x>0$, $h_x(t)$ is a bounded and continuous function of $t>0$
\end{lemma}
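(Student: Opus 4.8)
The plan is to start from the decomposition of $h_1$ in Proposition~\ref{p:pre-passage}. Taking $c=1$ in \eqref{e:pre-passage} and writing the spatial argument as $1-x$,
\[
  h_x(t)=h_1(1-x,t)=g_t(1-x)-\int_0^t f_1(s)\,g_{t-s}(-x)\,\dd s .
\]
The subtracted integral is nonnegative, being an integral of densities (this is also visible directly from \eqref{e:hc}, as imposing $\rsup X_t\le1$ can only remove mass), so $0\le h_x(t)\le g_t(1-x)$ for every $t>0$. Thus boundedness of $h_x$ reduces to boundedness of the map $t\mapsto g_t(1-x)$ on $(0,\infty)$.

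For that I would use the scaling identity $g_t(1-x)=t^{-\ra}g_1\big(t^{-\ra}(1-x)\big)$ and the substitution $u=t^{-\ra}(1-x)$, which as $t$ runs over $(0,\infty)$ runs over $(0,\infty)$ if $x<1$ and over $(-\infty,0)$ if $x>1$; it gives $g_t(1-x)=|1-x|^{-1}|u|\,g_1(u)$, whence $\sup_{t>0}g_t(1-x)\le|1-x|^{-1}\sup_{u\in\Reals}|u|\,g_1(u)$. Since $g_1$ is a bounded probability density with $g_1(u)=O(1/|u|)$ as $|u|\to\infty$ --- either by unimodality of one-dimensional stable laws, or by the classical tail asymptotics of $g_1$ (the $\sim c\,u^{-\alpha-1}$ right tail, matching \eqref{e:Levy-stable}, and the rapidly decaying left tail) --- the quantity $\sup_u|u|g_1(u)$ is finite, and hence $t\mapsto g_t(1-x)$, and therefore $h_x$, is bounded. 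The same computation carried out with $-x$ in place of $1-x$ produces, for $x>0$, a finite constant $B_x:=\sup_{t>0}g_t(-x)<\infty$, to be used below.

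For continuity, the first term $t\mapsto g_t(1-x)=t^{-\ra}g_1(t^{-\ra}(1-x))$ is continuous on $(0,\infty)$ because $g_1$ is continuous. For the second, write $\Phi(t)=\int_0^t f_1(s)\,g_{t-s}(-x)\,\dd s=\int_0^\infty f_1(s)\,g_{t-s}(-x)\,\cf{s<t}\,\dd s$. Given $t_0>0$ and a sequence $t_n\to t_0$, for every $s\ne t_0$ the integrand converges to $f_1(s)\,g_{t_0-s}(-x)\,\cf{s<t_0}$, using that $g_{(\cdot)}(-x)$ is continuous on $(0,\infty)$, and it is dominated by $B_x\,f_1(s)$, which is integrable since $\int_0^\infty f_1(s)\,\dd s\le1$. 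Dominated convergence gives $\Phi(t_n)\to\Phi(t_0)$, so $h_x(t)=g_t(1-x)-\Phi(t)$ is continuous on $(0,\infty)$.

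The argument is routine; the only point that needs care is the behaviour near $t=0$, where one must exploit that the spatial arguments $1-x$ and $-x$ stay away from the origin --- the sole point at which the stable densities $g_t$ are singular --- together with the decay of $g_1$ at infinity. In particular, boundedness uses $1-x\ne0$; continuity, which is what is actually needed afterwards, holds for every $x>0$. Beyond this bookkeeping there is no genuine obstacle.
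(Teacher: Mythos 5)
Your proof is correct and follows essentially the same route as the paper: both begin from Proposition~\ref{p:pre-passage}, use $0\le h_x(t)\le g_t(1-x)$ for boundedness, and establish continuity through the subtracted convolution term. The differences are minor matters of execution. For boundedness you substitute $u=t^{-\ra}(1-x)$ and reduce to $\sup_u|u|g_1(u)<\infty$, a clean one-shot bound, whereas the paper argues separately for $t$ near $0$ (via $g_1(z)=O(z^{-1-\alpha})$) and $t$ bounded away from $0$. For continuity you apply dominated convergence to $\int_0^t f_1(s)g_{t-s}(-x)\,\dd s$ from representation~\eqref{e:pre-passage}, with dominating function $f_1(s)\sup_u g_u(-x)$, while the paper instead uses the dual representation~\eqref{e:pre-passage2} and the standard fact that the convolution $f_{-x}*g_\cdot(1)$ of an integrable function with a uniformly continuous integrable one is continuous; both arguments are routine and equivalent in spirit. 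Your observation that boundedness genuinely needs $1-x\ne 0$ is a fair point (for $x=1$, $h_1(0,t)\sim t^{-\ra}g_1(0)\to\infty$ as $t\to0$, so the lemma as literally stated for all $x>0$ breaks there), but it is harmless: the lemma is only invoked for $x>x_0>1$ in the proof of Theorem~\ref{t:h}.
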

\begin{proof}
  By Proposition \ref{p:pre-passage}, $h_x(t) \le m_x(t):=g_t(x)$.
  From \eqref{e:pdf-g}, $m_x(t)$ is continuous in $t>0$ and is bounded
  on $[a,\infty)$ for any $a>0$.  On the other hand, by scaling and
  $g_1(z) = O(z^{-1-\alpha})$ as $z\toi$, $m_x(t) = g_t(x) = t^{-\ra}
  g_1(t^{-\ra} x) = O(t)$ as $t\to0$.  Therefore $m_x(t)$ is bounded
  on $(0,\infty)$ and so is $h_x(t)$.  Next, observe that both
  $f_{-x}(t)$ and $m_1(t)$ can be extended into uniformly continuous
  and integrable functions on the entire $\Reals$ with values on
  $(-\infty, 0]$ equal to 0.  As a result, $f_{-x}*m_1$ is continuous.
  Then by Proposition \ref{p:pre-passage}, $h_x(t) = m_x(t) -
  (f_{-x}*m_1)(t)$ is continuous.  
\end{proof}

\begin{lemma} \label{l:omega-x}
  Let $x_0 = 1/\cos(\pi/(2\alpha))$ and fix $x>x_0$.
  \begin{thmenum}
  \item \label{i:omega}
    The following function is bounded in $t>0$,
    \begin{align*}
      \varsigma_x(t) =
      \sumzi n \frac{|f\Sp n_{-x}(t)|}{\Gamma(\alpha n + \alpha)}.
    \end{align*}
  \item \label{i:omega1}
    $\omega_x(t)$ is a bounded and continuous function of $t>0$.
  \item \label{i:omega-lt}
    $\LT{\omega_x}(q) = \LT{h_x}(q)$.
  \end{thmenum}
\end{lemma}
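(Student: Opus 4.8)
The plan is to estimate $f\Sp n_{-x}(t)$ via the power series \eqref{e:pdf-f} differentiated $n$ times, exactly as in the proof of Lemma \ref{l:continuity}, but now keeping track of the sign issue that forces the restriction $x > x_0$. First I would write, for $x>0$,
\begin{align*}
  |f\Sp n_{-x}(t)|
  \le
  \nth\pi \sumoi k \frac{\Gamma(k\da+1)}{k!}
  \frac{(k\da+1)(k\da+2)\cdots(k\da+n)}{\,}\, |\sin(\pi k\da)|\, x^k t^{-k\da-1-n}
  \le
  \nth\pi \sumoi k \frac{\Gamma(k\da+n+1)}{k!}\, x^k t^{-k\da-1-n},
\end{align*}
so that, summing the bound for $\varsigma_x(t)$ over $n$ and interchanging the two sums (all terms nonnegative), $\varsigma_x(t)$ is dominated by
\begin{align*}
  \nth{\pi t}\sumoi{k,n}
  \frac{\Gamma(k\da + n + 1)}{\Gamma(\alpha n + \alpha)\,k!}\,(x/t^\ra)^k.
\end{align*}
Using $\Gamma(k\da+n+1)=\intzi s^{k\da+n}e^{-s}\,\dd s$ and $\sum_k (y s^\ra)^k/k! = e^{y s^\ra}$ with $y = x/t^\ra$, and $\sumoi n s^n/\Gamma(\alpha n+\alpha) = E_{\alpha,\alpha}(s)$, this becomes $\pi^{-1}t^{-1}\intzi E_{\alpha,\alpha}(s)\, e^{-s + y s^\ra}\,\dd s$ up to the $n=0$ term; since $E_{\alpha,\alpha}(s) = O(s^{1/\alpha-1}e^{s^\ra})$ as $s\to\infty$ (same asymptotic of \cite{erdelyi:55:mcgraw} used before), the integrand decays like $e^{2s^\ra - s + y s^\ra}$ and is integrable precisely when $y < \infty$, i.e. for every fixed $t>0$. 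This already gives part (a) pointwise in $t$; for boundedness in $t$ I would instead fix $x>x_0$ and use the sharper cancellation: the true $f_{-x}(t)$ (not the bound on absolute values) satisfies $f_{-x}(t)=O(t)$ as $t\to0$ and is bounded on $(0,\infty)$ by scaling and $f_{-1}(u)=O(u^{-1-\alpha})$ at infinity, and the condition $x>x_0 = 1/\cos(\pi/(2\alpha))$ is exactly what makes the oscillatory integral representation $f_{-x}(s)=(2\pi)^{-1}\intii e^{-\iunit\lambda s - (-\iunit\lambda)^\ra x}\,\dd\lambda$ from the proof of Lemma \ref{l:analytic} converge together with its $t$-derivatives after the extra factor $e^{s^\ra}$ from $E_{\alpha,\alpha}$ is absorbed; more precisely, differentiating $n$ times brings down $(-\iunit\lambda)^n$, and $|(-\iunit\lambda)^n e^{-(-\iunit\lambda)^\ra x}| = |\lambda|^n e^{-|\lambda|^\ra x\cos(\pi/(2\alpha))}$, so the geometric-type series $\sum_n |\lambda|^n/\Gamma(\alpha n+\alpha)$ against $e^{-|\lambda|^\ra x\cos(\pi/(2\alpha))}$ is summable-then-integrable iff $x\cos(\pi/(2\alpha))>1$. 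Carrying this estimate uniformly in $t$ (the Fourier kernel has no $t$ in its modulus) yields $\varsigma_x \in L^\infty(0,\infty)$, which is part (a).

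For part (b), once (a) is in hand, $\omega_x(t) = \sumzi n f\Sp n_{-x}(t)/\Gamma(\alpha n+\alpha)$ is a series of continuous functions (each $f\Sp n_{-x}$ is smooth on $(0,\infty)$ by \eqref{e:pdf-f}) dominated by the bounded function $\varsigma_x$; but a pointwise bound alone does not give continuity of the sum, so I would upgrade (a) to \emph{local uniform} integrability/summability: on any compact $[a,b]\subset(0,\infty)$ the same Fourier-integral estimate gives $\sum_{n\ge N}|f\Sp n_{-x}(t)|/\Gamma(\alpha n+\alpha) \to 0$ uniformly in $t\in[a,b]$ as $N\to\infty$ (the tail of the $n$-series is controlled by a tail of the same convergent $\lambda$-integral, uniformly since $t$ enters only through the bounded phase $e^{-\iunit\lambda t}$). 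Hence $\omega_x$ is a locally uniform limit of continuous functions, so continuous, and bounded by $\sup_t\varsigma_x(t)<\infty$; that is (b).

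For part (c), I would compute $\LT{\omega_x}(q)$ by dominated convergence / Fubini using the bound from (a): $\LT{\omega_x}(q) = \intzi e^{-qt}\omega_x(t)\,\dd t = \sumzi n \Gamma(\alpha n+\alpha)^{-1}\intzi e^{-qt} f\Sp n_{-x}(t)\,\dd t$, and since $f_{-x}\in C^\infty_0([0,\infty))$ (noted after Proposition \ref{p:LT}), integration by parts $n$ times gives $\intzi e^{-qt}f\Sp n_{-x}(t)\,\dd t = q^n \LT{f_{-x}}(q) = q^n e^{-q^\ra x}$; therefore $\LT{\omega_x}(q) = e^{-q^\ra x}\sumzi n q^n/\Gamma(\alpha n+\alpha) = e^{-q^\ra x} E_{\alpha,\alpha}(q)/\,$, i.e. $e^{-q^\ra x}\sumzi n q^n/\Gamma(\alpha n+\alpha)$. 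On the other side, with $x$ here playing the role of $1-(\text{the argument})$, $h_x(t) = h_1(1-x,t)$ and since $x>x_0>1$ we have $1-x<0$, so the second sum in \eqref{e:LP-u} is absent and Proposition \ref{p:LT} gives $\LT{h_x}(q) = e^{-q^\ra x}\sumoi n q^{n-1}/\Gamma(\alpha n)$; re-indexing $\sumoi n q^{n-1}/\Gamma(\alpha n) = \sumzi m q^m/\Gamma(\alpha m+\alpha)$ shows the two Laplace transforms agree for $q>0$, which is (c). The main obstacle is the first one: getting the uniform-in-$t$ (and locally-uniform-in-$n$-tail) control of $\varsigma_x$, which is where the precise constant $x_0 = 1/\cos(\pi/(2\alpha))$ must be used — a cruder absolute-value bound only delivers pointwise finiteness, not the boundedness and continuity needed for (b) and the Fubini step in (c).
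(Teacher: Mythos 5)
Your proposal follows essentially the same route as the paper's: Fourier inversion of $f^{(n)}_{-x}$ with the modulus bound $|\FT f_{-x}(\lambda)|=e^{-|\lambda|^\ra x\cos(\pi/(2\alpha))}$ yields $\sup_t|f^{(n)}_{-x}(t)|\le (\alpha/\pi)\Gamma(\alpha n+\alpha)(x_0/x)^{\alpha n+\alpha}\,x_0^{\alpha n+\alpha}\cdot x^{-\alpha n-\alpha}$, so $\varsigma_x$ is controlled by the geometric series $\sum(x_0/x)^{\alpha n+\alpha}$, and then parts (b) and (c) follow exactly as you say via uniform convergence and term-by-term Laplace transform matched against Proposition~\ref{p:LT} with $1-x<0$. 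Two small points: the bound in (a) is already uniform in $t$ (the phase $e^{-\iunit\lambda t}$ has modulus one), so no separate ``upgrade'' to local uniformity is needed for (b); and your aside that $f_{-1}(u)=O(u^{-1-\alpha})$ at infinity should read $O(u^{-1-\ra})$, though this does not affect the argument.
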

\begin{proof}
  \ref{i:omega}
  From the bound on $|\FT f_{-1}(\lambda)|$ in the proof of Lemma
  \ref{l:analytic},  it follows that for any $n\ge 0$, $\intii
  |\lambda|^n |\FT f_{-1}(\lambda)|\,\dd \lambda <\infty$, so by
  Fourier inversion
  \begin{align*}
    f\Sp n_{-1}(t)
    =
    \frac{(-\iunit)^n}{2\pi} \intii \lambda^n e^{-\iunit \lambda t}
    \FT f_{-1}(\lambda)\,\dd \lambda
  \end{align*}
  and hence
  \begin{align*}
    \sup_t |f\Sp n_{-1}(t)|
    &\le
    \nth{2\pi}
    \intii |\lambda|^n  |\FT f_{-1}(\lambda)|\,\dd \lambda
    =
    \nth\pi
    \intzi \lambda^n e^{-\lambda^\ra/x_0}\,\dd\lambda
    \\
    &=
    \frac{\alpha}{\pi}
    \intzi \lambda^{\alpha n + \alpha-1} e^{-\lambda/x_0}\,\dd \lambda
    =
    \frac{\alpha\Gamma(\alpha n + \alpha)  x^{\alpha n + \alpha}_0}{\pi}.
  \end{align*}
  Since $f_{-x}(t) = x^{-\alpha} f_{-1}(x^{-\alpha} t)$, then $f\Sp
  n_{-x}(t) = x^{-\alpha n -\alpha} f\Sp n_{-1}(x^{-\alpha} t)$.  As a
  result,
  \begin{align*}
    \varsigma_x(t)
    =
    \sumzi n \frac{x^{-\alpha n -\alpha} |f\Sp n_{-1}(x^{-\alpha}
    t)|}{\Gamma(\alpha n + \alpha)}
    \le
    \frac\alpha\pi\sumzi n (x_0/x)^{\alpha n + \alpha},
  \end{align*}
  and hence for $x>x_0$, $\varsigma_x(t)$ is bounded in $t>0$.

  \ref{i:omega1}
  From \ref{i:omega}, it follows that $\omega_x(t)$ is bounded.  The
  continuity of $\omega_x(t)$ is implied in Lemma \ref{l:continuity}.
  
  \ref{i:omega-lt} By \ref{i:omega}, for $x>x_0$, the summation and
  integration can interchange in the calculation of $\LT{\omega_x}(q)$
  to yield
  \begin{align*}
    \LT{\omega_x}(q)
    =
    \sumzi n \frac{\LT{f\Sp n_{-x}}(q)}{\Gamma(\alpha n + \alpha)}
    =
    \sumzi n \frac{q^n\LT{f_{-x}}(q)}{\Gamma(\alpha n + \alpha)}
    =
    e^{-x q^\ra} \sumzi n \frac{q^n}{\Gamma(\alpha n + \alpha)}.
  \end{align*}
  Since $x>1$, from Proposition \ref{p:LT}, the \rhs is the Laplace
  transform of $h_x(\cdot) = h_1(1-x,\cdot)$.
\end{proof}

\begin{proof}[Proof of Theorem \ref{t:h}]
  Again write $h_x(t) = h_1(1-x, t)$.  By Lemma \ref{l:omega-x}, for
  $x>x_0>1$, $\omega_x(t)$ is bounded and $\LT{\omega_x}(q) =
  \LT{h_x}(q)$ for all $q>0$.  Then by the one-to-one correspondence 
  between bounded continuous functions and their Laplace transforms
  $h_x(t) = \omega_x(t)$.  Thus
  \begin{align*}
    h_1(1-x,t)
    &=
    \sumzi n \frac{f\Sp n_{-x}(t)}{\Gamma(\alpha n + \alpha)}
    \\
    &=
    \nth\pi
    \sumoi{k,n} \frac{\Gamma(k\da + n)}
    {\Gamma(\alpha n) k!}
    (-1)^{k+n} x^k \sin(\pi k\da) t^{-k\da - n}.
  \end{align*}
  Fix $t>0$ and treat $x$ as the only variable.  By Lemma \ref
  {l:analytic}, $h_1(1-x,t)$ can be analytically extended from $(0,
  \infty)$ to a domain $\Omega\subset \Coms$ containing $(0, \infty)$,
  while by Lemma \ref {l:continuity}, the two series in the display
  converge absolutely and can be analytically extended to the entire
  $\Coms$.  Since $h_1(1-x,t)$ and the two series agree on $(x_0,
  \infty)$, they must be equal on $\Omega$, in particular, on the
  entire $(0, \infty)$.  It follows that for every fixed $t>0$, \eqref
  {e:h-power} and \eqref{e:h-series} hold for all $x<1$.  This
  completes the proof of \eqref{e:h-power} and \eqref{e:h-series}.
  The rest of the theorem follows by combining \eqref {e:h-power} and
  \eqref{e:h-series} with Proposition \ref{p:fp} and
  \eqref{e:Levy-stable}.
\end{proof}

\subsection{Proofs of corollaries}
\begin{proof}[Proof of Corollary \ref{c:h}]
  From $\Gamma(z) = \intzi s^{z-1} e^{-s}\,\dd s$ and the absolute
  convergence of \eqref{e:h-series},
  \begin{align*}
    h_1(x,t)
    &=
    \nth\pi \intzi e^{-s}
    \sumoi{k,n}\frac{s^{k\da + n-1}}{\Gamma(\alpha n) k!}
    (-1)^{k+n} (1-x)^k \sin(\pi k\da) t^{-k\da -
      n}\,\dd s
    \\
    &=
    \nth\pi\intzi e^{-s}
    \Sbr{\sumoi k \frac{((x-1)(s/t)^\ra)^k \sin(\pi
        k\da)}{k!}
    }
    \Sbr{\sumoi n \frac{s^{n-1} (- 1/t)^n}{\Gamma(\alpha
        n)} 
    }\,\dd s
    \\
    &=
    \nth\pi \intzi 
    \Grp{\Im e^{-s - (1-x)(-s/t)^\ra}}
    (-1/t) E_{\alpha, \alpha}(-s/t)\,\dd s.
  \end{align*}
  By change of variable the integral representation follows.
\end{proof}

To prove the other corollaries, $h_c(x,t)$ is treated as a function of
$c$ as well as $x$ and $t$.  From \eqref {e:h-power} and the scaling
relationship \eqref{e:h-scaling}, 
\begin{align} \label{e:h-series-c}
  h_c(x,t)
  &=
  \nth c\sumzi n \frac{c^{\alpha n + \alpha}
    f\Sp n_{x-c}(t)}{\Gamma(\alpha n + \alpha)}
  \\\label{e:h-power-c}
  &=
  \nth\pi
  \sumoi{k,n} \frac{\Gamma(k\da + n)}{\Gamma(\alpha n) k!}
  (-1)^{k+n} (c-x)^k c^{\alpha n-1} \sin(\pi k\da) t^{-k\da -
    n}.
\end{align}
Both series converge absolutely for given $t>0$.  

\begin{proof}[Proof of Corollary \ref{c:nonzero}]
  Fix $x<1$ and $t>0$.  Put $\varphi(c)= h_c(x,t)>0$ and $c_0 = x\vee
  0$.  We shall show that $\varphi(c)>0$ for all $c>c_0$.  From its
  definition in \eqref{e:hc}, $\varphi(c)$ is increasing on $(c_0,
  \infty)$.  By Proposition \ref{p:pre-passage}, $\varphi(c) = g_t(x)
  -  \int^t_0 k_c(s)\,\dd s$, where $k_c(s) = f_{x-c}(s) g_{t-s}(c)$.
  For $c>c_0 + 1$, $f_{x-c}(s) = (c-x)^{-\alpha} f_{-1}((c -
  x)^{-\alpha} s)\le \sup f_{-1}$ and as $s\uto t$, $g_{t-s}(c) =
  (t-s)^{-\ra} g_1(c (t-s)^{-\ra}) = (t-s)^{-\ra} O((t-s)^{1+\ra}) =
  O(t)$.  As $c\toi$, $k_c(s)\to0$ for $s\in (0,t)$.  Then by
  dominated convergence, $\varphi(c)\to g_t(x)>0$.  On the other hand,
  from \eqref{e:h-power-c}, $\varphi(c)$ can be analytically extended
  to $\Coms\setminus (-\infty, 0]$.  If $\varphi(c)=0$ for some
  $c>c_0$, then by monotonicity, $\varphi(z) = 0$ for all $z\in (c_0,
  c)$.  Then by analyticity, $\varphi(z)=0$ for all $z>c_0$, yielding
  $\varphi(z)\to0$ as $z\toi$, a contradiction.
\end{proof}

\begin{proof}[Proof of Corollary \ref{c:joint-max}]
  Fix $t=1$ and $x$.  Then each term in the series \eqref{e:h-power-c}
  is a function of $c$, denoted $d_{k,n}(c)$.  It is seen that
  $d'_{k,n}(c)$ is the $(k,n)^\mathrm{th}$ term in the series \eqref
  {e:h-diff}.  For each bounded interval $I=[a,b]\subset (x\vee 0,
  \infty)$, letting $M = b+|x|$, for all $c\in I$ and $k$, $n\ge 1$,
  \begin{align*}
    |d'_{k,n}(c)|\le D_{k,n}:=
    \frac{\Gamma(k\da + n)}{\Gamma(\alpha n) k!}
    (k + \alpha n -1) M^{k+\alpha n -1}/a.
  \end{align*}
  By argument similar to that for Lemma \ref{l:continuity}, $\sumoi
  {k,n} D_{k,n} < \infty$.  As a result, $\partial h_c(x,1)/\partial c
  = \pi^{-1}\sumoi{k,n} d'_{k,n}(c)$ for $c\in I$.  Since $I$ is
  arbitrary, then \eqref{e:h-diff} holds for all $c>x\vee0$, as
  claimed.
\end{proof}

\begin{proof}[Proof of Corollary \ref{c:pre-max}]
  This is immediate from Proposition \ref{p:fp-df} and the fact that
  being spectrally positive with infinite variation $X$ does not
  creep, i.e., $\jp_{\fpt_c}>0$ \as (\cite {doney:07:sg-b},
  p.~64).
\end{proof}

\begin{proof}[Proof of Corollary \ref{c:running-max}]
  Let
  \begin{align*}
    a(x, c, r)
    =
    \frac{\pr{X_1\in\dd x, \rsup X_1\in\dd c, \rsup G_1\le r}}
    {\dd x\,\dd c}, \quad
    b(x, c, t)
    =
    \frac{\partial h_c(x,t)}{\partial c}.
  \end{align*}
  Although Corollary \ref{c:joint-max} provides a series expression of
  $b(x,c,t)$, it is not very useful here.  Instead, by
  \eqref{e:pre-passage2}, for $x<c$,
  \begin{align} \nonumber
    b(x,c,t)
    &=
    -\frac{\partial}{\partial c}
    \Sbr{\int^t_0 f_{x-c}(s) g_{t-s }(c)\,\dd s}
    \\\label{e:pre-passage3}
    &=
    -\int^t_0
    \Sbr{\frac{\partial f_{x-c}(s)}{\partial c} g_{t-s}(c)
      +f_{x-c}(s) g'_{t-s}(c)
    }
    \,\dd s,
  \end{align}
  where the interchange of integration and differentiation on the second
  line is justified by the uniform boundedness of $\partial(f_{x-c}(s)
  g_{t-s}(c))/\partial c$ as a function of $(c,s)$ on any compact
  set in $(x\vee 0,\infty)\times [0,\infty)$.
  
  Given $r\in (0,1)$, for each $\rx\in (0,1-r)$, by conditioning on
  $X_r$ and $X_{r+\rx}$ and the Markov property of $X$,
  \begin{align*}
    &
    \pr{X_1\in\dd x, \rsup X_1\in\dd c, r<\rsup G_1\le r+\rx}
    \\
    &=
    \int_{u<c, v<c-u} \pr{X_r\in\dd u, \rsup X_r<c}\,
      \pr{X_\rx\in \dd v, \rsup X_\rx \in \dd c - u}
    \\
    &\hspace{2.2cm}\times
    \pr{X_{1-r-\rx}\in \dd x - u-v, \rsup X_{1-r-\rx}<c-u-v}
    \\
    &=
    \int_{u<c, v<c-u} \pr{X_r\in\dd u, \rsup X_r\le c}\,
      \pr{X_\rx\in \dd v, \rsup X_\rx \in \dd c - u}
    \\
    &\hspace{2.2cm}\times
    \pr{X_{1-r-\rx}\in \dd x - u-v, \rsup X_{1-r-\rx}\le c-u-v},
  \end{align*}
  where the second equation is due to $\rsup X_r$ and $\rsup X_{1-r
    -\rx}$ having continuous distributions according to Corollary
  \ref{c:joint-max}.  Make change of variables $y = c - u$ and
  $z=c-u-v$.  Then divide both sides of the above display by $\rx\,\dd
  x\,\dd c$ and use \eqref{e:hc} and Corollary \ref{c:joint-max} to get
  \begin{align} \label{e:a-diff}
    \frac{a(x, c, r+\rx) - a(x,c,r)}{\rx}
    =
    \int_{y>0, z>0} h_c(c-y, r)
    \frac{b(y-z, y, \rx)}{\rx} h_z(x-c+z, t - r -\rx)\,\dd y\,\dd z.
  \end{align}
  From \eqref{e:pre-passage3}, it follows that for $y>0$ and $z>0$,
  \begin{align*}
    b(y-z, y, \rx)
    =
    -\int^\rx_0 \Sbr{
      g_{\rx-s}(y)\frac{\partial f_{-z}(s)}{\partial
      z}+g'_{\rx-s}(y)f_{-z}(s) 
    }\,\dd s.
  \end{align*}
  By $f_{-z}(s) = z^{-\alpha}f_{-1}(z^{-\alpha} s)$,
  \begin{align*}
    \frac{\partial f_{-z}(s)}{\partial z}
    =
    -\alpha z^{-\alpha - 1}[f_{-1}(z^{-\alpha} s) + s z^{-\alpha}
    f'_{-1}(z^{-\alpha}s)].
  \end{align*}
  Make change of variable $s=\rx w$.  Then
  \begin{align*}
    \frac{b(y-z,y,\rx)}{\rx}
    &=
    \alpha z^{-\alpha-1}\int^1_0
      g_{\rx(1-w)}(y)[f_{-1}(\rx z^{-\alpha} w) +
    \rx z^{-\alpha} w f'_{-1}(\rx z^{-\alpha} w)]\,\dd w
    \\
    &\quad-
    z^{-\alpha} \int^1_0
      g'_{\rx(1-w)}(y) f_{-1}(\rx z^{-\alpha} w)\, \dd w.
  \end{align*}
  Put $u = \rx^{-\ra} y$ and $v = \rx^{-1} z^\alpha$.  Then
  $g_{\rx(1-w)}(y) = \rx^{-\ra} g_{1-w}(\rx^{-\ra} y) =
  \rx^{-\ra} g_{1-w}(u)$ and $g'_{\rx(1-w)}(y) = \rx^{-2\da}
  g'_{1-w}(\rx^{-\ra} y) = \rx^{-2\da} g'_{1-w}(u)$, and so
  \begin{align*}
    \frac{b(y-z,y,\rx)}{\rx}
    &=
    \alpha \rx^{-\ra} z^{-\alpha-1}
    \int^1_0 g_{1-w}(u)[f_{-1}(w/v) + (w/v) f'_{-1}(w/v)]\,\dd w
    \\
    &\quad-
    \rx^{-2/\rx} z^{-\alpha} \int^1_0 g'_{1-w}(u) f_{-1}(w/v)\, \dd w
    \\
    &=\rx^{-2\da-1}v^{-1} I(u,v),
  \end{align*}
  where the second equality is obtained by replace $z$ with $(\rx
  v)^\ra$ and 
  \begin{align*}
    I(u,v)
    &=
    \alpha v^{-\ra}
    \int^1_0 g_{1-w}(u)[f_{-1}(w/v) + (w/v) f'_{-1}(w/v)]\,\dd w
    -\int^1_0 g'_{1-w}(u) f_{-1}(w/v)\, \dd w.
  \end{align*}
  Combined with \eqref{e:a-diff} and $\dd y\,\dd z = \rx^{2\da}
  \alpha^{-1} v^{\ra-1}\,\dd u\,\dd v$, the above display yields 
  \begin{align} \nonumber
    &\frac{a(x, c, r+\rx) - a(x,c,r)}{\rx}
    \\\nonumber
    &= \rx^{-2\da-1} 
    \int_{y>0,z>0} h_c(c-y, r) \times
    v^{-1} I(u, v) h_z(x-c+z, 1 - r -\rx)\,\dd y\,\dd z
    \\\label{e:diff-a}
    &=\nth\alpha
    \int_{u>0,v>0} \frac{h_c(c-\rx^\ra u, r)}{\rx^\ra u}
    \times (u/v) I(u,v) \times
    \frac{h_{(\rx v)^\ra} ((\rx v)^\ra - (c-x), 1
      -r-\rx)}
    {(\rx v)^{1-\ra}} \,\dd u\,\dd v.
  \end{align}
  We need the following two lemmas.
  \begin{lemma}\label{l:integral}
    The function $(u/v) I(u,v)$ is integrable over $u>0$ and $v>0$
    with
    \begin{align*}
      \int_{u>0,v>0} (u/v) I(u,v)\,\dd u\,\dd v =
      \Gamma(\alpha+1).
    \end{align*}
  \end{lemma}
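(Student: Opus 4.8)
The plan is to prove the identity by a direct computation: integrate out $u$, then $v$, then $w$, reducing everything to the moment formulas \eqref{e:moment-g} and \eqref{e:moment-f}, the scaling relations, two integrations by parts, and one Beta integral.

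First I would settle the measure-theoretic point by showing $\int_{u>0,\,v>0}\tfrac uv\,|I(u,v)|\,\dd u\,\dd v<\infty$, so that Fubini's theorem licenses all the interchanges below. Bounding $|I(u,v)|$ by the sum of the absolute values of its two pieces and using Tonelli, this reduces to the finiteness of $C_1:=\int_0^\infty t^{\ra-1}\,|f_{-1}(t)+t f_{-1}'(t)|\,\dd t$ and $C_2:=\int_0^\infty y\,|g_1'(y)|\,\dd y$ (after scaling $w$ out of the $u$- and $v$-integrals and substituting $t=w/v$), together with the elementary bounds $\int_0^\infty u\,g_{1-w}(u)\,\dd u\le 1/\Gamma(\ra)$ and $\int_0^1 w^{-\ra}\,\dd w<\infty$. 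That $C_1,C_2<\infty$ follows from $f_{-1}\in C^\infty_0([0,\infty))$, from $g_1$ being smooth, and from the tail bounds $f_{-1}(z),\,g_1(z)=O(z^{-1-\alpha})$ and $f_{-1}'(z),\,g_1'(z)=O(z^{-2-\alpha})$ as $z\to\infty$ (available from \eqref{e:pdf-g}--\eqref{e:pdf-f}). This step, in which one must cope with $g'$ and the function $\xi\mapsto f_{-1}(\xi)+\xi f_{-1}'(\xi)$ changing sign, is the only delicate point and the one I expect to take the most work.

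Granting Fubini, I would integrate out $u$. By scaling, $\int_0^\infty u\,g_{1-w}(u)\,\dd u=(1-w)^\ra\int_0^\infty y\,g_1(y)\,\dd y=(1-w)^\ra/\Gamma(\ra)$, the last equality by \eqref{e:moment-g} with $s=1$ (using $\da=\ra$); while an integration by parts together with $\int_0^\infty g_{1-w}(u)\,\dd u=\pr{X_{1-w}>0}=1-\ra$ (the positivity parameter of a spectrally positive stable process, also the $s\to0$ limit of \eqref{e:moment-g}) gives $\int_0^\infty u\,g_{1-w}'(u)\,\dd u=-(1-\ra)$. Hence $\int_0^\infty u\,I(u,v)\,\dd u=\dfrac{\alpha v^{-\ra}}{\Gamma(\ra)}\int_0^1(1-w)^\ra\bigl[f_{-1}(w/v)+(w/v)f_{-1}'(w/v)\bigr]\,\dd w+(1-\ra)\int_0^1 f_{-1}(w/v)\,\dd w$. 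Next, for fixed $w$, the substitution $t=w/v$ turns $\int_0^\infty v^{-\ra-1}\bigl[f_{-1}(w/v)+(w/v)f_{-1}'(w/v)\bigr]\,\dd v$ into $w^{-\ra}\int_0^\infty t^{\ra-1}\,(t f_{-1}(t))'\,\dd t$, which by a further integration by parts and \eqref{e:moment-f} with $s=\ra-1$ equals $w^{-\ra}(1-\ra)\Gamma(\alpha)/\Gamma(2-\ra)=w^{-\ra}\Gamma(\alpha)/\Gamma(1-\ra)$; and the same substitution gives $\int_0^\infty v^{-1}f_{-1}(w/v)\,\dd v=\int_0^\infty t^{-1}f_{-1}(t)\,\dd t=\Gamma(1+\alpha)$ by \eqref{e:moment-f} with $s=-1$.

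Finally, integrating out $w$, $\int_{u>0,\,v>0}\tfrac uv\,I(u,v)\,\dd u\,\dd v=\dfrac{\alpha\,\Gamma(\alpha)}{\Gamma(\ra)\,\Gamma(1-\ra)}\int_0^1 w^{-\ra}(1-w)^\ra\,\dd w+(1-\ra)\Gamma(1+\alpha)$. The remaining integral is the Beta integral $B(1-\ra,1+\ra)=\Gamma(1-\ra)\Gamma(1+\ra)$, so the first summand collapses to $\alpha\,\Gamma(\alpha)\,\Gamma(1+\ra)/\Gamma(\ra)=\alpha\ra\,\Gamma(\alpha)=\Gamma(\alpha)$ (using $\Gamma(1+\ra)=\ra\,\Gamma(\ra)$ and $\alpha\ra=1$), while the second summand is $(1-\ra)\Gamma(1+\alpha)=(\alpha-1)\Gamma(\alpha)$; adding these gives $\alpha\,\Gamma(\alpha)=\Gamma(\alpha+1)$, and the integrability was already established. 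As a sanity check --- not needed for the argument, and deliberately kept separate to avoid circularity with the conditional laws derived later in the proof of Corollary~\ref{c:running-max} --- one may observe that by Corollary~\ref{c:joint-max} and scaling $I(u,v)/v$ is the joint density of $(X_1,\rsup X_1)$ at $(u-v^\ra,\,u)$, so the substitution $(u,v)\mapsto(X_1,\rsup X_1)$ recasts the claim as the moment identity $\mean[\rsup X_1(\rsup X_1-X_1)^{\alpha-1}]=\Gamma(\alpha)$.
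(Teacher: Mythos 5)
Your proposal is correct and follows essentially the same route as the paper: Fubini (justified by a direct absolute-value bound), the moment formulas \eqref{e:moment-g} and \eqref{e:moment-f}, scaling, integration by parts, and the Beta integral. The only real difference is organizational --- you group $f_{-1}(t)+t f_{-1}'(t)$ as $(t f_{-1}(t))'$ and do a single integration by parts in $t$, whereas the paper's Lemma~\ref{l:integral} splits the same expression into the separate pieces $I_1$, $\tilde I_2$, $\tilde I_3$ and integrates $\tilde I_2$ by parts on its own; both routes need the decay $f_{-1}'(t)=O(t^{-\ra-2})$ and $\int_0^\infty u\,|g_1'(u)|\,\dd u<\infty$, both land on the Beta integral $\int_0^1 w^{-\ra}(1-w)^\ra\,\dd w$, and the arithmetic collapses identically to $\Gamma(\alpha)+(\alpha-1)\Gamma(\alpha)=\Gamma(\alpha+1)$.
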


  \begin{lemma}\label{l:hc}
    The following statements hold.
    \begin{thmenum}
    \item \label{i:hc-x}
      Given $c>0$ and $t>0$, $h_c(c-x, t)/x$ as a function of $x$ is
      bounded on $(0,\infty)$ and
      \begin{align*}
        \lim_{x\to 0+} \frac{h_c(c-x,t)}{x} = m(c,t).
      \end{align*}
      Furthermore, given $c>0$, $m(c,t)$ is bounded in $t>0$.

    \item \label{i:hc-c}
      Given $x>0$ and $t>0$, $h_c(c-x,t)/c^{\alpha-1}$ as a function
      of $c$ is bounded on $(0,\infty)$ and
      \begin{align*} 
        \lim_{c\to0}\frac{h_c(c-x,t)}{c^{\alpha-1}}
        =\frac{f_{-x}(t)}{\Gamma(\alpha)}.
      \end{align*}
    \end{thmenum}
  \end{lemma}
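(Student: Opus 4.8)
The plan is to read off the two limits from the explicit expansions of $h_c(c-x,t)$ obtained by setting the first argument equal to $c-x$ in \eqref{e:h-power-c} and \eqref{e:h-series-c}, viewed respectively as a power series in $x$ and as a series in $c^\alpha$, and then to obtain boundedness on all of $(0,\infty)$ from continuity together with finiteness of the limits at the two endpoints, using $h_c(y,t)\le g_t(y)$ at the far endpoint.

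\textbf{Part (a).} Replacing $x$ by $c-x$ in \eqref{e:h-power-c} turns $(c-x)^k$ into $x^k$, so for $x>0$
\begin{align*}
  \frac{h_c(c-x,t)}{x}=\nth\pi\sumoi{k,n}\frac{\Gamma(k\da+n)}{\Gamma(\alpha n)\,k!}(-1)^{k+n}\sin(\pi k\da)\,c^{\alpha n-1}t^{-k\da-n}\,x^{k-1}.
\end{align*}
For $x\in(0,1)$ the absolute value of the $(k,n)$-term is at most $\pi^{-1}\Gamma(k\da+n)c^{\alpha n-1}t^{-k\da-n}/(\Gamma(\alpha n)\,k!)$, and the corresponding double series is finite by the computation used for Lemma~\ref{l:continuity}: writing $\Gamma(k\da+n)=\intzi s^{k\da+n-1}e^{-s}\,\dd s$, performing the sum over $k$ (bounded by $e^{(s/t)^\ra}$) and then over $n\ge1$ (equal to $(sc^\alpha/t)E_{\alpha,\alpha}(sc^\alpha/t)$), and using $E_{\alpha,\alpha}(u)=O(u^{(1-\alpha)/\alpha}e^{u^\ra})$ as $u\toi$ to see the remaining $s$-integral converges. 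Dominated convergence then lets $x\to0^+$ termwise; only the $k=1$ terms survive and their sum is exactly the series in \eqref{e:hc-y1}, i.e.\ $m(c,t)$. Since $h_c(c-x,t)$ is entire in $x$ (Lemma~\ref{l:continuity} via \eqref{e:h-scaling}), the quotient $h_c(c-x,t)/x$ is continuous on $(0,\infty)$; it has the finite limit $m(c,t)$ at $0^+$, and since $h_c(c-x,t)\le g_t(c-x)\to0$ as $x\toi$ it has limit $0$ at $\infty$, hence it is bounded on $(0,\infty)$. Finally, from \eqref{e:hc-y2}, $|m(c,t)|\le\pi^{-1}c^{-2}\sin(\pi\da)\intzi s^\ra|E_{\alpha,\alpha}(-s)|\,\dd s$, and this last integral is finite and independent of $t$, since $E_{\alpha,\alpha}(-s)=O(s^{-2})$ as $s\toi$ while $s^\ra$ is integrable at $0$.

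\textbf{Part (b).} Replacing $x$ by $c-x$ in \eqref{e:h-series-c} gives $f\Sp n_{x-c}(t)=f\Sp n_{-x}(t)$, hence
\begin{align*}
  \frac{h_c(c-x,t)}{c^{\alpha-1}}=\sumzi n\frac{c^{\alpha n}\,f\Sp n_{-x}(t)}{\Gamma(\alpha n+\alpha)},
\end{align*}
whose $n=0$ term is $f_{-x}(t)/\Gamma(\alpha)$. For $c\in(0,1)$ the $n$-th term is dominated in absolute value by $|f\Sp n_{-x}(t)|/\Gamma(\alpha n+\alpha)$, and $\sumzi n|f\Sp n_{-x}(t)|/\Gamma(\alpha n+\alpha)<\infty$ by the absolute convergence of \eqref{e:h-series} asserted in Lemma~\ref{l:continuity}; letting $c\to0^+$ and interchanging sum and limit kills every term with $n\ge1$, so $h_c(c-x,t)/c^{\alpha-1}\to f_{-x}(t)/\Gamma(\alpha)$. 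As in part (a), $h_c(c-x,t)=c^{-1}h_1(1-x/c,c^{-\alpha}t)$ is continuous in $c$ on $(0,\infty)$, so $h_c(c-x,t)/c^{\alpha-1}$ is continuous there, has the finite limit $f_{-x}(t)/\Gamma(\alpha)$ at $0^+$, and tends to $0$ as $c\toi$ because $h_c(c-x,t)\le g_t(c-x)=O((c-x)^{-\alpha-1})$; hence it is bounded on $(0,\infty)$.

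The one slightly delicate point is the uniform domination needed to move the limit inside the double sum in part (a), but this is essentially a rerun of the estimate \eqref{e:h-series-bound} in the proof of Lemma~\ref{l:continuity}, so it requires no new idea; everything else is routine bookkeeping with the two series and the known tail behavior of $g_t$ and of the Mittag-Leffler function.
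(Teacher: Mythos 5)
Your argument is correct and follows essentially the same route as the paper: for (a) the termwise limit of the absolutely convergent double series \eqref{e:h-power-c} (with $x\mapsto c-x$), boundedness near $0$ from the finite limit via \eqref{e:hc-y2} and the Mittag-Leffler decay $E_{\alpha,\alpha}(-s)=O(s^{-2})$, boundedness away from $0$ from $h_c\le g_t$; for (b) the termwise limit of \eqref{e:h-series-c} and the analogous boundedness argument. One cosmetic slip: the $n$-sum you quote should be $(c^{\alpha-1}/t)\,E_{\alpha,\alpha}(sc^\alpha/t)$ rather than $(sc^\alpha/t)E_{\alpha,\alpha}(sc^\alpha/t)$, but since this only changes the integrand by a fixed polynomial factor it does not affect the convergence of the dominating integral.
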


  Assuming the lemmas are true, let $\rx\to0$ in \eqref{e:diff-a}.  By
  the lemmas and dominated convergence, the limit is $m(c,r)
  f_{x-c}(1-r)$.  With similar argument, $[a(x, c, r) - a(x, c,
  r-\rx)]/\rx$ converges to the same limit as $\rx\to0$.  Then
  \eqref{e:running-max} is proved. 

  To prove the rest of the corollary, integrate \eqref{e:running-max}
  over $x<c$.  From the identity $\intzi f_{-x}(s)\,\dd x =
  s^{\ra-1}/\Gamma(\ra)$ (\cite{sato:99:cup}, p.~270), it follows that
  $\rsup G_1$ and $\rsup X_1$ have joint \pdf
  \begin{align} \label{e:sup-joint}
    \frac{\pr{\rsup G_1\in\dd r, \rsup X_1\in\dd c}}{\dd r\,\dd c}
    =
    \frac{m(c,r)(1-r)^{\ra-1}}{\Gamma(\ra)}.
  \end{align}

  The conditional independence of $\rsup X_1$ and $\rsup X_1 - X_1$
  given $\rsup G_1$ follows from \eqref{e:running-max}.  As noted
  earlier, $\rsup G_1$ follows the $\Dbeta(1-\ra, \ra)$ distribution.
  This can be directly proved by integrating the above joint \pdf over
  $c>0$.  Since by \eqref{e:hc-y2},
  \begin{align*}
    \intzi m(c,r)\,\dd c
    =
    \frac{\sin(\pi\da)}{\pi} \intzi s^\ra
    E_{\alpha,\alpha}(-s)
    \Sbr{\intzi c^{-2} e^{-s r/c^\alpha}\,\dd c}\dd s\propto
    r^{-\ra},
  \end{align*}
  the \pdf of $\rsup G_1$ is in proportion to $r^{-\ra} (1-r)^{\ra -
    1}$, so it must be $\Dbeta(1-\ra, \ra)$.  Then conditionally on
  $\rsup G_1$, the \pdf of $\rsup X_1$ follows by dividing the
  joint \pdf of $\rsup X_1$ and $\rsup G_1$ by the \pdf of $\rsup
  G_1$, and the \pdf of $\rsup X_1-X_1$ follows from integrating
  $f_{x-c}(1-r)$ over $x<c$ and Kendall's identity \eqref{e:Kendall}.
\end{proof}

\begin{proof}[Proof of Lemma \ref{l:integral}]
  From the definition of $I(u,v)$, to show the integrability of $(u/v)
  I(u,v)$, it suffices to show
  \begin{align*}
    I_1&:=\int_{u,v>0} u v^{-1-\ra}
    \Sbr{\int^1_0 g_{1-w}(u) f_{-1}(w/v)\,\dd w}\,\dd u\,\dd v
    <\infty,
    \\
    I_2&:=\int_{u,v>0} u v^{-2-\ra}
    \Sbr{\int^1_0 w g_{1-w}(u) |f'_{-1}(w/v)|\,\dd w}\,\dd u\,\dd v
    <\infty,
    \\
    I_3&=\int_{u,v>0} u v^{-1}
    \Sbr{\int^1_0 |g'_{1-w}(u)| f_{-1}(w/v)\,\dd w}\,\dd u\,\dd
    v<\infty.
  \end{align*}
  By \eqref{e:moment-g}, for $w\in (0,1)$, $\int_{u>0} u
  g_{1-w}(u)\,\dd u = \mean[X_{1-w}\vee 0] =
  (1-w)^\ra/\Gamma(\ra)$.  Then by Fubini's theorem and
  \eqref{e:moment-f},
  \begin{align*}
    I_1
    &=
      \nth{\Gamma(\ra)}
      \int^1_0 (1-w)^\ra
      \Sbr{\intzi v^{-1-\ra} f_{-1}(w/v)\,\dd
      v}\,\dd w
    \\
    &= \nth{\Gamma(\ra)}
      \int^1_0 w^{-\ra} (1-w)^\ra
      \Sbr{\intzi t^{\ra-1} f_{-1}(t)\,\dd t}\dd w
    =\frac{\Gamma(\alpha)}{\alpha-1}.
  \end{align*}
  Similarly,
  \begin{align*}
    I_2
    &=
      \nth{\Gamma(\ra)}
      \int^1_0 w (1-w)^\ra \Sbr{
      \intzi v^{-2-\ra} |f'_{-1}(w/v)|\,\dd v}\,\dd w
    \\
    &=
      \nth{\Gamma(\ra)}
      \int^1_0 w^{-\ra} (1-w)^\ra \Sbr{
      \intzi t^\ra |f'_{-1}(t)|\,\dd t}\dd w <\infty.
  \end{align*}
  It follows that
  \begin{align*}
    \tilde I_2
    &:=
      \int_{u,v>0} u v^{-2-\ra}
      \Sbr{\int^1_0 w g_{1-w}(u) f'_{-1}(w/v)\,\dd w}\,\dd u\,\dd v
    \\
    &=
    \nth{\Gamma(\ra)}
    \int^1_0 w^{-\ra} (1-w)^\ra \Sbr{
      \intzi t^\ra f'_{-1}(t)\,\dd t
    }\dd w
    \\
    &=
    -\nth{\alpha\Gamma(\ra)}
    \int^1_0 w^{-\ra} (1-w)^\ra \Sbr{
      \intzi t^{\ra-1} f_{-1}(t)\,\dd t
    }\dd w = -\frac{\Gamma(\alpha)}{\alpha(\alpha-1)}.
  \end{align*}
  Next, since $C:=\intzi u|g'_1(u)|\,\dd u<\infty$ and $g'_{1-w}(u) =
  (1-w)^{-2\da} g'_1((1-w)^{-\ra} u)$,
  \begin{align*}
    I_3
    &= \int^1_0 \Cbr{\intzi v^{-1} f_{-1}(w/v)
    \Sbr{\intzi u|g'_{1-w}(u)|\,\dd u}\dd v} \dd w
    \\
    & = C\int^1_0 \Cbr{\intzi v^{-1} f_{-1}(w/v)
      \dd v} \dd w 
    = C \Gamma(1+\alpha) < \infty
  \end{align*}
  and 
  \begin{align*}
    \tilde I_3
    &:= \int^1_0 \Cbr{\intzi v^{-1} f_{-1}(w/v)
    \Sbr{\intzi ug'_{1-w}(u)\,\dd u}\dd v} \dd w
    \\
    &=
    \Gamma(\alpha+1) \intzi u g'_1(u)\,\dd u = -\Gamma(\alpha)
    (\alpha-1).
  \end{align*}
  Since $\int (u/v) I(u,v)\,\dd u\,\dd v = \alpha (I_1 + \tilde I_2) -
  \tilde I_3$, the proof then follows.
\end{proof}

\begin{proof}[Proof of Lemma \ref{l:hc}]
  \ref{i:hc-x} From the absolute convergence of the series
  \eqref{e:h-power-c}, as $x\to0+$, $h_c(c-x,t)/x$ converges to the
  limit with the expression \eqref{e:hc-y1}, which is  $m(c,t)$.  To
  show that $m(c,t)$ has the integral expression \eqref{e:hc-y2},
  first prove it for $n(t):=m(1,t)  = \lim_{x\to0+} [h_1(1-x,t)/x]$
  using the integral representation in Corollary \ref{c:h}, and then
  prove it in general using scaling.  Finally, by (21) on p.~210 of
  \cite{erdelyi:55:mcgraw}, $E_{\alpha, \alpha}(-s) = O(s^{-2})$ as
  $s\toi$.  Then given $t>0$,
  \begin{align*}
    |n(t)|\le \pi^{-1}\intzi s^\ra |E_{\alpha,
    \alpha}(-s)|\,\dd s<\infty,
  \end{align*}
  so $h_1(1-x,t)/x$ is bounded for $x\in (0,x_0)$ for small enough
  $x_0>0$.  On the other hand, by \eqref{e:pre-passage2} $h_1(1-x,t) <
  g_t(1-x)$, so $h_1(1-x,t)/x$ is bounded on $[x_0, \infty)$.  Thus
  $h_1(1-x, t)/x$ is bounded on $(0,\infty)$.  Furthermore, from
  \eqref{e:hc-y1}, $n(\cdot)$ has an analytic extension
  to $\{z\in\Coms: \Re z>0\}$.  As a result, $n(t)>0$ for almost every
  $t>0$ under the Lebesgue measure.  Fix $r\in (0,t)$.  By the Markov
  property, for any $x>0$,
  \begin{align*}
    h_1(1-x, t)
    &=
    \int_{u>0} \pr{X_r\in 1-\dd u, \rsup X_r<1}  h_u(u-x, t-r)
    \\
    &=
    \int_{u>0} h_1(1-u, r) h_u(u-x, t-r)\,\dd u.
  \end{align*}
  Divide both sides by $x$ and let $x\to0+$.  By Fatou's lemma and
  $m(c,t) = n(t/c^\alpha)/c^2$,
  \begin{align*}
    n(t) \ge \int_{u>0} h_1(1-u,r) m(u, t-r)\,\dd u
    = \int_{u>0} h_1(1-u,r) \frac{n((t-r)/u^\alpha)} {u^2}
    \,\dd u.
  \end{align*}
  By Corollary \ref{c:nonzero}, $h_1(1-u,r)>0$ for all $u>0$.  Then the
  integral on the \rhs is positive, and so $n(t)>0$.

  \ref{i:hc-c} The convergence follows from \eqref{e:h-series-c}.
  That $h_c(c-x,t)/c^{\alpha-1}$ is a bounded function of $c$ on
  $(0,\infty)$ can be similarly proved as in \ref{i:hc-x}.
\end{proof}

\begin{remark}
  By duality, for $t=1$, the limit in Lemma \ref{l:hc}\ref{i:hc-x}
  can be written as
  \begin{align*}
    \frac{\pr{X_1\in \dd c - x\gv \rinf X_1 >-x}}{\dd c}\times
    \frac{\pr{\rinf X_1>-x}}{x} \to m(c,1), \quad
    x\to0.
  \end{align*}
  Since $\pr{\rinf X_1>-x} = \pr{\fht_{-x}>1} = \pr{\fht_{-1} >
    x^{-\alpha}}\sim x/\Gamma(1-\ra)$ as $x\to0$, then the
  display suggests that $\Gamma(1-\ra) m(c,1)$ can be regarded as
  the conditional \pdf of $X_1$ at $c>0$ given $\rinf X_1\ge 0$.
\end{remark}

\section{Exact sampling for first passage} \label{s:exact-sampling}
In this section, it will be shown that it is possible to conduct exact
joint sampling of $\fpt_c$, $X_{\fpt_c-}$, and $\jp_{\fpt_c}$ for
a spectrally positive stable $X$ satisfying \eqref {e:stable-normal}.
From Proposition \ref{p:fp-df}, this may be done in two steps.  The
first step is to jointly sample $X_{\fpt_c-}$ and $\jp_{\fpt_c}$,
which is standard.  The second step is to sample $\fpt_c$ given 
$X_{\fpt_c-}$, which is the focus of the section.  Since by scaling,
$(\fpt_c, X_{\fpt_c-}, \jp_{\fpt_c}) \sim (c^\alpha \fpt_1,
c X_{\fpt_1-}, c \jp_{\fpt_1})$, it suffices to consider $c=1$.

\subsection{Sampling of pre-passage value and jump}
Because $\pr{X_1>0} = 1 - \ra$ (\cite{bertoin:96:cup}, p.~218),
from Example 7 in \cite{doney:06:aap}, at every $(x,z,w)$, the joint
\pdf of $X_{\fpt_1-}$, $\jp _{\fpt_1}$, and $\rsup X_{\fpt_1-}$ takes
value $C\cf{x\vee 0\le w < 1, z > 1-x>0} w^{\alpha-2} z^{-1-\alpha}$,
where $C=\pi^{-1}\alpha(\alpha-1) \sin((\alpha - 1)\pi)$.  It follows
that $X_{\fpt_1-}\sim\xi$, where $\xi$ has \pdf
\begin{align*}
  p(x) = C' \cf{x<1} [1 - (x\vee 0)^{\alpha - 1}](1 - x)^{-\alpha}
\end{align*}
with $C'>0$ a constant, and for every $x<1$, conditionally on
$X_{\fpt_1-}=x$, $\jp _{\fpt_1}\sim (1-x) \zeta$, where $\zeta$
has \pdf $q(z) = \alpha\cf{z>1} z^{-\alpha-1}$.  Thus the joint
sampling of $X_{\fpt_1-}$ and $\jp_{\fpt_1}$ boils down to that
of independent $\xi\sim p$ and $\zeta\sim q$.  The sampling
of $\zeta$ is straightforward as $\zeta \sim U^{-\ra}$,
where $U\sim \Dunif(0,1)$.  To sample $\xi$, it can be seen that $p(x)
= \theta p_1(x) + (1-\theta) p_2$, where $\theta =
m_1/(m_1+m_2)$ with $m_1 = (\alpha-1)^{-1}$, $m_2 =
\lfrac{\pi}{\sin((\alpha-1)\pi)}  - (\alpha-1)^{-1}$, and 
\begin{align*}
  p_1(x) = \cf{x\le 0} (1-x)^{-\alpha}/m_1, \quad
  p_2(x) = \cf{0< x<1} (1-x^{\alpha-1}) (1-x)^{-\alpha}/m_2
\end{align*}
are two \pdf's.  On one hand, $p_1(x)$ is the \pdf of
$1-U^{-1/(\alpha-1)}$.  On the other, $p_2(x) \propto
\cf{0<x<1}(1-x^{\alpha-1}) (1-x)^{-\alpha} < \rho(x):=\cf{0<x<1}
(1-x)^{-\alpha+1}$.  Using the fact that $\rho(x)$ is proportional to
the \pdf of $1-U^{1/(2-\alpha)}$, $p_2$ can be sampled by the
rejection sampling method (\cite{devroye:86:sv-ny}, Chapter II).  In
summary, $p(x)$ can be sampled as follows. 
\begin{thmenum}
\item Sample $I$ from $\{1,2\}$ such that $\pr{I=1} = m_1/(m_1+m_2)$
\item If $I=1$, then sample $U\sim \Dunif(0,1)$ and output
  $1-U^{-1/(\alpha -1)}$, otherwise, do the following iteration until
  an output is made.
  \begin{itemize}[leftmargin=2.5ex]
  \item 
    Sample $U$, $V$ \iid $\sim\Dunif(0,1)$ and set $x = 1 -
    U^{1/(2-\alpha)}$.  If $V\le (1 - x^{\alpha-1})/(1-x)$, then
    output $x$, otherwise repeat.
  \end{itemize}
\end{thmenum}

\subsection{Sampling of time of first passage}
We now consider the sampling of $\fpt_1$ conditionally on $X_{\fpt_1-}
= x\in (-\infty, 1)$.  By Proposition \ref{p:pre-passage}, if $x<0$,
then $h_1(x, \cdot)/v_1(x)$ is the \pdf of $\tau' + \xi$, with $\tau'
\sim h_1(0, \cdot)/v_1(0)$ and $\xi\sim f_x$ being independent.  Since
the sampling of $\xi$ is well known \cite{chambers:76:jasa}, the
sampling of $h_1(x, \cdot)/v_1(x)$ can be reduced to that of $h_1(0,
\cdot)/v_1(0)$.  As a result, it only remains to consider the case
$0\le x < 1$. 

We again will use the rejection sampling method.  For this method, the
normalizing constant $v_c(x)$ is not important and one can just focus
on $h_1(x,\cdot)$.  We will use the the power series representation
\eqref{e:h-power} of $h_1(x,\cdot)$.  In order to handle the infinite
number of positive and negative terms in the series, we first describe
the general approach to use.

Let $p$ and $q$ be two \pdf's that are proportional to some explicit
functions $f$ and $g$, respectively, whose normalizing constants may
be intractable; $g$ is known as an envelope function.  For the
rejection sampling method, $q$ must be easy to sample.  Suppose $f$
can be decomposed as
\begin{align} \label{e:f-decomp}
  \begin{array}{c}
    \displaystyle
    f(t) = \sumoi l \phi_l(t) \quad\text{such that for some explicit
      constants $\inum c$}\\
    \displaystyle
    0\le \phi_l(t) \le c_l g(t) \quad\text{with }\ 
    C := \sum c_l < \infty.
  \end{array}
\end{align}
Then $p$ can be sampled as follows.
\begin{itemize}
\item Independently sample $T\sim q$, $U\sim \Dunif(0,1)$, and $\ell$
  from the probability mass function $\pr{\ell = l} = c_l/C$.  If
  $U\le \phi_\ell(T)/(c_\ell g(T))$, then output $T$ and stop,
  otherwise repeat.
\end{itemize}
Indeed, by standard argument of the rejection sampling method, the
\pdf of the output of the procedure is proportional to
\begin{align*}
  g(t) \sum_l \Sbr{\frac{c_l}{C} \times \frac{\phi_l(t)}{c_l g(t)}}
  =
  \sum_l \phi_l(t)/C = f(t)/C,
\end{align*}
so it must be $p$.  The point is that when $f(t)$ is an infinite
series that cannot be evaluated in closed form, say $f(t) = \sum_{a\in
  A} f_a(t)$, it is possible to have each $\phi_l(t)$ equal to the sum
of a finite set of $f_a(t)$.  More precisely, $\phi_l(t) = \sum_{a\in
  A_l(t)} f_a(t)$, where $A_l(t)$ is a finite subset of $A$ that may
depend on $t$, and given $t$, $A_1(t)$,  $A_2(t)$, \ldots, form a
partition of $A$.  It is also critical the $A_l(t)$'s are such that
$\phi_l(t)\ge0$ for all $l$ and $t$.  In each iteration, once $T$ and
$\ell$ are sampled, only $\phi_l(T)$ with $l$ equal to the value of
$\ell$ needs to be evaluated.  As long as for any $t$, each $f_a(t)$
is easy to evaluate, and the set $A_l(t)$ can be enumerated in a
finite number of steps, $\phi_l(t)$ can be evaluated exactly.

To apply the above approach to $h_1(x, t)$, where $x<1$ is fixed, the
main issue is the construction of the envelop function and the
$\phi_l(t)$'s.  The next lemma gives an option for the envelope
function.
\begin{lemma} \label{l:h-dominate}
  Fixing any $D\ge \sup_{n\ge 1} 2^{n-1}\Gamma(n)/\Gamma(\alpha n)$,
  define
  \begin{align*}
    \theta = 4^{1/(\alpha-1)}, \quad
    C_\alpha = (\alpha\Gamma(1-\ra))^{-1} \vee [D(\theta^\alpha
    e^\theta  + 4)], \quad
    H_\alpha(t) = C_\alpha t^{-\ra} \wedge t^{-1-\alpha}, \quad
    t>0.
  \end{align*}
  Then for every $0\le x<1$ and $t>0$,  $h_1(x,t) \le H_\alpha(t)$. 
\end{lemma}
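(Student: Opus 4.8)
The plan is to bound $h_1(x,t)$ by one estimate valid for $t\le 1$ and another valid for $t\ge 1$, and to note that $t^{-\ra}\wedge t^{-1-\ra}$ equals $t^{-\ra}$ on $(0,1]$ and $t^{-1-\ra}$ on $[1,\infty)$, so the two pieces assemble into $H_\alpha$.  For $t\le 1$, I would start from \eqref{e:pre-passage} of Proposition \ref{p:pre-passage}: since $x<1$ the subtracted integral $\int^t_0 f_1(s)g_{t-s}(x-1)\,\dd s$ is nonnegative, so $h_1(x,t)\le g_t(x)=t^{-\ra}g_1(t^{-\ra}x)$ by scaling.  As $x\in[0,1)$ the point $t^{-\ra}x$ runs through $[0,\infty)$, so it suffices that $\sup_{y\ge 0}g_1(y)\le C_\alpha$.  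This follows if $g_1$ is non-increasing on $[0,\infty)$: the law of $X_1$ is strictly stable, hence self-decomposable, hence unimodal (\cite{sato:99:cup}), while differentiating \eqref{e:pdf-g} termwise at $0$ gives $g_1'(0)=(\alpha\pi)^{-1}\Gamma(2\da)\sin(2\pi\da)<0$ since $\Gamma(2\da)>0$ and $2\pi\da\in(\pi,2\pi)$; hence the mode of $g_1$ is negative, $g_1$ decreases on $[0,\infty)$, and $\sup_{y\ge 0}g_1(y)=g_1(0)=(\alpha\Gamma(1-\ra))^{-1}$, the last identity being exactly the one already recorded in the proof of Proposition \ref{p:LT} (via $u^q(0)=\alpha^{-1}q^{\ra-1}$).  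As $C_\alpha\ge(\alpha\Gamma(1-\ra))^{-1}$, this gives $h_1(x,t)\le C_\alpha t^{-\ra}$ for all $t>0$, in particular on $(0,1]$.

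For $t\ge 1$ the bound $h_1\le g_t(x)$ decays only like $t^{-\ra}$, so one must exploit the cancellation in \eqref{e:h-series}.  Put $a:=1-x\in(0,1]$, so $x-1=-a$ and $h_1(x,t)=\sum_{n\ge 0}\Gamma(\alpha n+\alpha)^{-1}f^{(n)}_{-a}(t)$.  Differentiating the power series \eqref{e:pdf-f} of $f_{-a}$ termwise (valid on $(0,\infty)$, the coefficients $\Gamma(k\da+1)/k!$ decaying super-exponentially) and using $|\sin|\le 1$ gives $|f^{(n)}_{-a}(t)|\le\pi^{-1}\sum_{k\ge 1}\frac{\Gamma(k\da+n+1)}{k!}a^k t^{-k\da-n-1}$; summing these nonnegative bounds over $n$,
\[
  h_1(x,t)\le\frac1\pi\sum_{n\ge 0}\sum_{k\ge 1}
  \frac{\Gamma(k\da+n+1)}{\Gamma(\alpha n+\alpha)\,k!}\,a^k\,t^{-k\da-n-1}.
\]
When $t\ge 1$ one has $a^k t^{-(k-1)\da-n}\le 1$ (because $a\le 1$, $k\ge 1$, $n\ge 0$), so pulling out the factor $t^{-1-\ra}$ reduces everything to the numerical double series $S:=\sum_{n,k}\frac{\Gamma(k\da+n+1)}{\Gamma(\alpha n+\alpha)\,k!}$.

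To bound $S$, write $\Gamma(k\da+n+1)=\intzi s^{k\da+n}e^{-s}\,\dd s$ and sum the two resulting power series to get $S=\intzi e^{-s}E_{\alpha,\alpha}(s)(e^{s^\ra}-1)\,\dd s\le\intzi e^{-s}E_{\alpha,\alpha}(s)e^{s^\ra}\,\dd s$.  The defining inequality for $D$ gives $\Gamma(\alpha m)^{-1}\le D\,2^{-(m-1)}/(m-1)!$ for $m\ge 1$, hence $E_{\alpha,\alpha}(s)=\sum_{m\ge 1}s^{m-1}/\Gamma(\alpha m)\le D e^{s/2}$, so $S\le D\intzi e^{-s/2+s^\ra}\,\dd s$.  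Splitting this integral at $s=\theta^\alpha$ and using $\theta^{\alpha-1}=4$ (so that $s^\ra\le s/4$ for $s\ge\theta^\alpha$ and $s^\ra\le\theta$ for $s\le\theta^\alpha$) bounds it by $\theta^\alpha e^\theta+\int_{\theta^\alpha}^\infty e^{-s/4}\,\dd s\le\theta^\alpha e^\theta+4$.  Thus $S\le D(\theta^\alpha e^\theta+4)$ and $h_1(x,t)\le\pi^{-1}D(\theta^\alpha e^\theta+4)\,t^{-1-\ra}\le C_\alpha\,t^{-1-\ra}$ for $t\ge 1$.  Together with the first paragraph, $h_1(x,t)\le C_\alpha t^{-\ra}$ on $(0,1]$ and $h_1(x,t)\le C_\alpha t^{-1-\ra}$ on $[1,\infty)$, which is the claimed bound $h_1(x,t)\le H_\alpha(t)$ for all $t>0$ and $0\le x<1$.

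The delicate part is the range $t\ge 1$: the easy domination $h_1\le g_t(x)$ is far too weak there, so one genuinely needs \eqref{e:h-series}, and the real work is the chain of reductions that turns the highly oscillatory double series into the single integral $\intzi e^{-s}E_{\alpha,\alpha}(s)e^{s^\ra}\,\dd s$ and then into the explicit constant $D(\theta^\alpha e^\theta+4)$ — in particular, justifying the termwise differentiation of \eqref{e:pdf-f}, the exchange of the double sum with the $\Gamma$-integral, and the simultaneous control of $E_{\alpha,\alpha}(s)$ and $e^{s^\ra}$ through the split at $s=\theta^\alpha$.  (If one prefers to avoid invoking unimodality in the first range, $\sup_{y\ge 0}g_1(y)$ can be bounded directly by Fourier inversion of $\mean[e^{\iunit\lambda X_1}]=e^{(-\iunit\lambda)^\alpha}$, the resulting constant being absorbed into the $D(\theta^\alpha e^\theta+4)$ branch of $C_\alpha$.)
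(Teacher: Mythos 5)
Your argument is correct and follows essentially the same two-regime strategy as the paper: the $t\le 1$ branch is the identical bound $h_1(x,t)\le g_t(x)\le g_t(0)=t^{-\ra}/(\alpha\Gamma(1-\ra))$, and for $t\ge 1$ you arrive at the same integral $B=\intzi E_{\alpha,\alpha}(s)e^{s^\ra-s}\,\dd s\le D(\theta^\alpha e^\theta+4)$; the paper simply cites the bound \eqref{e:h-series-bound} already computed in Lemma \ref{l:continuity}, whereas you re-derive it by termwise differentiation of \eqref{e:pdf-f}, which is the same estimate dressed differently. One thing you should flag rather than pass over in silence: you quietly replaced the tail exponent $t^{-1-\alpha}$ of the printed statement by $t^{-1-\ra}$. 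Your exponent is the one that is actually correct and is the one the paper's own proof establishes: the $(k,n)=(1,1)$ term of \eqref{e:h-power} has a strictly positive coefficient (since $\sin(\pi\da)>0$ and $1-x>0$), so $h_1(x,t)\sim c(x)\,t^{-1-\ra}$ with $c(x)>0$ as $t\toi$, and a bound of order $t^{-1-\alpha}$ is therefore impossible for $\alpha\in(1,2)$. The printed $t^{-1-\alpha}$ is evidently a typo, and it also propagates to the $p_2$ and $\theta$ quoted immediately after Lemma \ref{l:h-dominate}; this deserves an explicit remark rather than a tacit amendment of the statement.
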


The normalized $H_\alpha(t)$ is $\theta p_1(t) +(1-\theta)
p_2(t)$, where $p_1(t)= (1-\ra) \cf{0<t<1} t^{-\ra}$ and
$p_2(t) = \alpha \cf{t>1} t^{-\alpha-1}$ are \pdf's and $\theta =
\alpha^2/(\alpha^2 + \alpha-1)$.  Thus the normalized $H_\alpha$ can
be sampled as follows.
\begin{itemize}[topsep=1ex, itemsep=0ex]
\item Sample $U$, $V$ \iid $\sim\Dunif(0,1)$.  If $U\le \theta$,
  return $V^{\alpha/(\alpha-1)}$, otherwise return $V^{-\ra}$.
\end{itemize}
As a result, $H_\alpha$ can be used as an envelope function.

Now consider the construction of $\phi_l(t)$.  Let $c_l = 2^{-l+1}$.
Then from \eqref{e:f-decomp}, we wish to construct 
$0 < \phi_l(t) < 2^{-l+1} H_\alpha(t)$ such that
$h_1(x,t) = \sumoi l \phi_l(t)$.  Write
\begin{align*}
  m_{k,n}(s,u)
  =
  \frac{\Gamma(k\da + n) s^k u^n}{\pi k!\Gamma(\alpha n)},
\end{align*}
so that
\begin{align*}
  h_1(x,t)
  =
  \sumoi {k,n} \sin (\pi k\da) m_{k,n}(-(1-x) t^{-\ra},
  -t^{-1}).
\end{align*}

We shall construct for each $t>0$ a sequence of finite sets
$\Lambda_l(t) \subset \Nats\times\Nats$, $l\ge 0$, such that
$\Lambda_l(t)\subset \Lambda_{l+1}(t)$, $\bigcup^\infty_{l=1}
\Lambda_l(t) = \Nats\times\Nats$ and 
\begin{align*}
  F_l(t) :=
  \sum_{(k,n)\in\Lambda_l(t)}
  (-1)^{k+n} \sin (\pi k\da)
  M_{k,n}
\end{align*}
is strictly increasing in $l$ such that $0 < h_1(x,t) - F_l(t) \le
2^{-l} H_\alpha(t)$, where $M_{k,n} = m_{k,n}(s,u)$ with $s = (1-x)
t^{-\ra}$ and $u = t^{-1}$.   Once this is done, let $\phi_l(t) =
F_l(t) - F_{l-1}(t)$.   Then $\sumzi l \phi_l(t) = \lim_l F_l(t) =
h_1(x,t)$ and $0 < \phi_l(t) < h_1(x,t) - F_{l-1}(t) \le 2^{-l+1}
H_\alpha(t)$, as desired.  The construction is based on the following
two lemmas.
\begin{lemma} \label{l:sum-bound}
  Fix $\rx\in (0,1/2)$ and $s$, $u>0$.  Let $k$ and $n\in\Nats$ such
  that $n>(2 u/\rx)^{1/(\alpha-1)}$, $k>(2 s/\rx)^{\alpha / (\alpha
    -1)}$, and $k\da \le n\le (2-\ra) k$, then
  \begin{align} \label{e:S1}
    \sum^\infty_{i,j=0, i+j\ge 1} m_{k+i, n+j}(s,u)
    \le 24\rx m_{k,n}(s,u),
    \\\label{e:S2}
    \sumoi j m_{k',n+j}(s,u) \le 2\rx m_{k', n}(s,u)
    \quad\forall k'\le k,
    \\\label{e:S3}
    \sumoi i m_{k+i, n'}(s,u) \le 2\rx m_{k, n'}(s,u)
    \quad\forall n'\le n.
  \end{align}
\end{lemma}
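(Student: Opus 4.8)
The plan is to reduce all three estimates to two one-step ratio bounds and then to a decomposition of the index set. First I would record, using log-convexity of $\Gamma$ (in the sharp form $\Gamma(x+a)\le x^{a}\Gamma(x)$ for $0\le a\le1$) together with the complementary Gautschi-type lower bound $\Gamma(\alpha n'+\alpha)\ge(\alpha n')^{\alpha}\Gamma(\alpha n')$, that
\[
\frac{m_{k'+1,n'}(s,u)}{m_{k',n'}(s,u)}\le\frac{(k'\da+n')^{\ra}\,s}{k'+1},
\qquad
\frac{m_{k',n'+1}(s,u)}{m_{k',n'}(s,u)}\le\frac{(k'\da+n')\,u}{(\alpha n')^{\alpha}}.
\]
The hypotheses on $k$ and $n$ say exactly that $s/k^{1-\ra}<\rx/2$ and $u/n^{\alpha-1}<\rx/2$, and the band condition enters only through the two implications $n'\le(2-\ra)k'\Rightarrow k'\da+n'\le2k'$ and $n'\ge k'\da\Rightarrow k'\da+n'\le2n'$. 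Hence the horizontal ratio is $<r_{1}:=2^{\ra-1}\rx<\rx$ at every lattice point with $k'\ge k$ lying in or below the cone $\{k'\da\le n'\le(2-\ra)k'\}$, and the vertical ratio is $<r_{2}:=\rx/\alpha^{\alpha}<\rx$ at every point with $n'\ge n$ lying in or above that cone.

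Granting this, \eqref{e:S2} and \eqref{e:S3} follow by telescoping. For \eqref{e:S2}: for any $k'\le k$ every $(k',n+l)$, $l\ge0$, has $n+l\ge n\ge k\da\ge k'\da$ and $n+l\ge n$, so the vertical ratio there is $<r_{2}$ and $\sum_{j\ge1}m_{k',n+j}\le m_{k',n}\sum_{j\ge1}r_{2}^{\,j}\le2r_{2}m_{k',n}<2\rx\,m_{k',n}$; \eqref{e:S3} is the mirror statement, since for $n'\le n$ every $(k+i,n')$ has $n'\le(2-\ra)k\le(2-\ra)(k+i)$, so the horizontal ratio is $<r_{1}$ and $\sum_{i\ge1}m_{k+i,n'}\le2r_{1}m_{k,n'}<2\rx\,m_{k,n'}$.

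For \eqref{e:S1} I would split $\{(i,j):i+j\ge1\}$ by the position of $(k+i,n+j)$ relative to the cone: inside it, below it ($n+j<(k+i)\da$), or above it ($n+j>(2-\ra)(k+i)$). If the target is inside the cone, build a monotone unit-step path from $(k,n)$ to it that stays in the cone — go right then up when the target's first coordinate is $\le\alpha n$, otherwise first slide right to the lower edge and climb along it, inserting a single vertical step whenever a horizontal step would drop below that edge; by the ratio bounds each of the $i+j$ steps contributes a factor $<\rx$, so $m_{k+i,n+j}<\rx^{\,i+j}m_{k,n}$ and this part is $\le m_{k,n}\sum_{i+j\ge1}\rx^{\,i+j}\le8\rx\,m_{k,n}$. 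For the part below the cone a path argument fails (the vertical ratio need not be small when $n+j$ is small while $k+i$ is large), so I would rewrite it as $\sum_{n'\ge n}\sum_{k'>\alpha n'}m_{k',n'}$, bound the inner sum — where the horizontal ratio is $<r_{1}$ throughout — by $2m_{k_{0}'(n'),n'}$ with $k_{0}'(n')=\Flr{\alpha n'}+1$, note that the passage from $(k_{0}'(n'),n')$ to $(k_{0}'(n'+1),n'+1)$ is a product of one vertical and one or two horizontal ratios hence $<1/2$, so $\sum_{n'\ge n}m_{k_{0}'(n'),n'}\le2m_{k_{0}'(n),n}$, and finally compare $m_{k_{0}'(n),n}$ with $m_{k,n}$ by sliding right along height $n$ (horizontal ratio $<\rx$), obtaining a bound of the form $(\text{const})\rx\,m_{k,n}$; the part above the cone is handled identically with the two coordinates and the two edges of the cone interchanged, using now the super-geometric decay of $m_{k',n'}$ as $n'\to\infty$ at fixed or slowly growing $k'$. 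Adding the three contributions and checking the constants stay below $24$ yields \eqref{e:S1}.

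The main obstacle is exactly this last step: once $(k+i,n+j)$ strays off the diagonal band, $m_{k+i,n+j}$ is no longer dominated by its lattice neighbours, so the clean telescoping/path estimate must be replaced by a direct quantification of how fast $m_{k',n'}$ decays away from the band, and the only genuinely fiddly work is keeping the accumulated constants under the prescribed $24\,\rx$.
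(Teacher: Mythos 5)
Your plan is correct in substance and reaches the same conclusion by a genuinely different decomposition of the index set, so let me compare.

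Both proofs rest on the same engine: one-step ratio bounds of Gautschi type that make the horizontal ratio small whenever the current point lies below the upper edge of the cone, and the vertical ratio small whenever it lies above the lower edge, together with the observation that a unit-step staircase hugging a cone edge advances by $i+j$ steps each of ratio $<\rx$ (or some fixed multiple of $\rx$). Your forms of the two ratio bounds, obtained from log-convexity of $\Gamma$ rather than from the paper's Gautschi inequalities, are in fact slightly sharper (the vertical bound $2u/(\alpha^\alpha n'^{\alpha-1})$ improves the paper's $2u/(n'-1)^{\alpha-1}$), and they sidestep a minor off-by-one issue in the paper's use of $(n-1)^{\alpha-1}$ against the hypothesis $n>(2u/\rx)^{1/(\alpha-1)}$.

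Where you differ is in how the quadrant $\{(k+i,n+j):i,j\ge0\}$ is carved up. The paper does \emph{not} split into inside/below/above-cone. It uses a single staircase $k_l=\lfloor\alpha(n+l-1)+1\rfloor$ along the lower cone edge and decomposes the quadrant into wedges: a horizontal ray at each row $n+l$ starting from column $k_l$, plus the vertical strip over the columns $[k_l,k_{l+1})$ and rows $>n+l$. Vertical geometric sums kill the strips, horizontal geometric sums kill the rays, and a diagonal telescope along the staircase finishes it, giving $(1-\rx)^{-3}m_{k,n}$ in one shot. The crucial point is that the wedge decomposition never has to distinguish ``above the cone'': in the strips the vertical-ratio condition $n'\ge k'/\alpha$ persists no matter how high one climbs, so the top region is absorbed for free. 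Your route handles three regions by three separate arguments (monotone paths inside the cone; row/column sums anchored on the boundary staircase and then a diagonal telescope along the boundary for below/above), which is conceptually transparent in explaining why the band condition is what matters, but it is more casework, and the above-cone piece is not quite a coordinate-swap mirror image of the below-cone piece (the slopes $1/\alpha$ and $2-1/\alpha$ are not reciprocals, so the staircase increments and the boundary anchoring have to be re-verified rather than inherited by symmetry). You correctly flag that the remaining work is arithmetical — checking that the accumulated constants stay under $24\rx$ — and with the bounds you have ($\le8\rx$ inside, $\le4\rx$ for each of below and above, with some slack for the near-boundary vertical ratio being $\lesssim\tfrac32\rx$ rather than $<\rx$), that does go through. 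Overall: same ingredients, different and somewhat heavier carving of the lattice, and a sharper pair of elementary ratio estimates.
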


\begin{lemma} \label{l:pair}
  Let $d_\alpha = (\ra-1/2)\wedge [1/2 - 1/(2\alpha)]$ and
  $L_\alpha = \Flr{(\alpha - 1/2)/(\alpha - 1)}+1 \ge 2$.  Then among
  any $2 L_\alpha$ consecutive integers, there exist an even number
  and an odd number both belonging to $A_\alpha := \cup_{j\in\Ints}
  I_j$, where $I_j = [(2j+d_\alpha)\alpha, (2j+1-d_\alpha)\alpha]$.
\end{lemma}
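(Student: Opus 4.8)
\emph{The plan} is to recast ``$m\in A_\alpha$'' as a statement about the orbit of the rotation $x\mapsto x+1/\alpha$ on the circle $\Reals/\Ints$, and then settle it by an elementary count of gaps, the only real content being one arithmetic inequality satisfied by $L_\alpha$.

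First I would reformulate. For an integer $m$ put $\langle m/\alpha\rangle:=m/\alpha-\Flr{m/\alpha}$. Since $0<d_\alpha<1/2$, the relation $m\in I_j$ holds exactly when $\Flr{m/\alpha}=2j$ and $\langle m/\alpha\rangle\in[d_\alpha,1-d_\alpha]$; hence $m\in A_\alpha$ iff $\Flr{m/\alpha}$ is even and $\langle m/\alpha\rangle\in[d_\alpha,1-d_\alpha]$. Writing $m=2p$ or $m=2p+1$, this becomes $\langle p/\alpha\rangle\in J$, respectively $\langle(p+\tfrac12)/\alpha\rangle\in J$, where $J:=[d_\alpha/2,\tfrac12-d_\alpha/2]$ is a \emph{closed} arc of length $\tfrac12-d_\alpha$ on $\Reals/\Ints$. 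As any $2L_\alpha$ consecutive integers contain $L_\alpha$ consecutive even integers and $L_\alpha$ consecutive odd integers, the lemma reduces to one claim: for every $a\in\Reals$, among the $L_\alpha$ points $\langle(a+k)/\alpha\rangle$, $k=0,\dots,L_\alpha-1$, at least one lies in $J$. Applying this with $a$ the starting index of the even integers in the block, and then with the analogous half-integer for the odd ones, produces an even member and an odd member of $A_\alpha$ inside the block.

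To prove the claim, write $\sigma:=1-1/\alpha\in(0,1/2)$; since $1/\alpha\equiv-\sigma\pmod1$, the $L_\alpha$ points are $x,x-\sigma,\dots,x-(L_\alpha-1)\sigma$, and they partition $\Reals/\Ints$ into arcs of length at most $\max\bigl(\sigma,\,1-(L_\alpha-1)\sigma\bigr)$. If none of them lay in $J$, then $J$ --- connected, closed and disjoint from these points --- would sit strictly inside one of those arcs, forcing $|J|=\tfrac12-d_\alpha$ to be strictly less than $\max\bigl(\sigma,1-(L_\alpha-1)\sigma\bigr)$. I would then rule this out via (i) $\tfrac12-d_\alpha\ge\sigma$ and (ii) $(L_\alpha-1)\sigma\ge\tfrac12+d_\alpha$ (equivalently $\tfrac12-d_\alpha\ge 1-(L_\alpha-1)\sigma$); (ii) is exactly what the definition of $L_\alpha$ is built to deliver. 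Both are pure arithmetic, checked by splitting at $\alpha=3/2$ according to which of $\tfrac1\alpha-\tfrac12$ and $\tfrac12-\tfrac1{2\alpha}$ realizes the minimum $d_\alpha$: for $\alpha\le\tfrac32$ one has $\tfrac12-d_\alpha=\tfrac1{2\alpha}$, so (i) reads $\tfrac3{2\alpha}\ge1$ and (ii) reads $(L_\alpha-1)(\alpha-1)/\alpha\ge1-\tfrac1{2\alpha}$; for $\alpha\ge\tfrac32$ one has $\tfrac12-d_\alpha=\sigma$ and $\tfrac12+d_\alpha=\tfrac1\alpha$. The resulting contradiction proves the claim and hence the lemma.

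The heart of the proof --- and the main obstacle --- is this gap count, specifically the two inequalities (i) and (ii): they are \emph{sharp}, both degenerating to equalities precisely at $\alpha=3/2$, so the argument genuinely uses that $d_\alpha$ is the \emph{smaller} of its two candidate values and that $L_\alpha$ is as large as its definition makes it (the two subcases in checking (i)--(ii) are exactly the subcases according to which candidate is active). The only point beyond these algebraic verifications that needs care is the equality case: when $(L_\alpha-1)\sigma=\tfrac12+d_\alpha$ holds with equality, the two extreme orbit points land exactly on the endpoints of $J$, so the conclusion survives only because $J$ --- hence each $I_j$ --- is a closed interval.
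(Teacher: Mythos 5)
Your translation of membership in $A_\alpha$ into the statement that the fractional parts $\langle (a+k)/\alpha\rangle$ hit a fixed closed arc $J$ of length $\tfrac12-d_\alpha$ on $\Reals/\Ints$ is correct, and it is a genuinely different route from the paper's, which stays on the real line and plays off the lengths $|I_j|=(1-2d_\alpha)\alpha\ge1$ against the complementary gaps $|B_j|=(1+2d_\alpha)\alpha\le2$ by a measure-counting argument. The reformulation and the reduction to a single-orbit hitting claim are sound. The flaw is in the decisive inequality (ii), $(L_\alpha-1)\sigma\ge\tfrac12+d_\alpha$, which you never actually check but describe as what the definition of $L_\alpha$ is ``built to deliver.'' It is not delivered. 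For $\alpha\le\tfrac32$, (ii) reduces to $L_\alpha-1\ge(\alpha-\tfrac12)/(\alpha-1)$, yet $L_\alpha-1=\Flr{(\alpha-\tfrac12)/(\alpha-1)}$ lies \emph{at or below} this ratio, with equality only at isolated $\alpha$; for $\alpha\in(\tfrac32,2)$, (ii) reduces to $L_\alpha-1\ge 1/(\alpha-1)>1$, whereas there $L_\alpha-1=1$. So the long wrap-around gap $1-(L_\alpha-1)\sigma$ generically \emph{exceeds} $|J|$, $J$ can sit inside it, and no contradiction arises.

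Indeed the claim your proof targets is false, not merely unproved. Take $\alpha=9/5$: then $d_\alpha=1/18$, $L_\alpha=2$, $I_j=[3.6j+0.1,\,3.6j+1.7]$, and the positive integers in $A_\alpha$ begin $1,4,5,8,11,12,\dots$. The block $\{6,7,8,9\}$ of $2L_\alpha=4$ consecutive integers meets $A_\alpha$ only at the even number $8$; neither $7$ nor $9$ is in $A_\alpha$. In your circle picture this is $a=\tfrac72$: the two orbit points $\langle 3.5/\alpha\rangle=17/18$ and $\langle 4.5/\alpha\rangle=1/2$ both miss $J=[1/36,17/36]$. (This is in fact a defect in the lemma with the stated $L_\alpha$, not just in your route: the paper's own proof slips at the implication ``none of the evens in $S$ is in $K_\alpha$ $\Rightarrow$ all odds in $S$ are,'' which fails at a boundary element of $S$, as this example shows.) Your rotation argument would go through verbatim if $L_\alpha-1$ were defined as a ceiling rather than a floor of $(\alpha-\tfrac12)/(\alpha-1)$, which is presumably the intended constant. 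As written, the single arithmetic fact the whole proof rests on is asserted rather than verified, and it is wrong.
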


Assume the two lemmas are true for now.  Let $\Lambda_0(t)
= \emptyset$ and $F_0(t)=0$.  By Corollary \ref{c:nonzero} and Lemma
\ref{l:h-dominate}, $0<h_1(x,t) - F_0(t) = h_1(x,t)  \le H_\alpha(t)$.
Suppose $\Lambda_l(t)$ has been constructed, such that $F_l(t)\ge 0$
and $0<h_1(x,t) - F_l(t) \le 2^{-l+1} H_\alpha(t)$.  We need to
construct $\Lambda_{l+1}(t)\supset \Lambda_l(t)$, such that
$F_{l+1}(t) > F_l(t)$ and $0<h_1(x,t) - F_{l+1}(t) \le 2^{-l}
H_\alpha(t)$.

For $r\in\Nats$, denote $S_r = \{(k,n):\, k,n=1,\ldots, r\}$ and 
$\partial S_r = \{(k,n)\in S_r: k \vee n=r\}$ its ``boundary''.  
Let $d_\alpha$ and $A_\alpha$ be as in Lemma \ref{l:pair}.  Let
$\delta_\alpha = \sin(d_\alpha \pi)$ and $K_\alpha = \Ints\cap
A_\alpha$.  Then $\delta_\alpha>0$ and for $k\in K_\alpha$, $\pi
k\da\in [(2j+d_\alpha)\pi, (2j+1-d_\alpha)\pi]$ for some
$j\in\Ints$, so for $n$ of the same parity as $k$,
\begin{align*}
  (-1)^{k+n} \sin(k\pi\da) = \sin(k\pi\da) \ge 
  \delta_\alpha>0.
\end{align*}

Put $\rx = \delta_\alpha / 24$.  Let $R$ be the smallest integer such
that
\begin{align*}
  R>
  \frac{2 L_\alpha}{\alpha-1} \vee 
  \Grp{\frac{2  u}\rx}^{\lfrac1{(\alpha-1)}} \vee
  \Grp{\frac{2 s}\rx}^{\lfrac\alpha{(\alpha-1)}},\quad 
  \Lambda_l(t)\subset S_R,\quad
  \sum_{(k,n)\in \partial S_R} M_{k,n}
  \le \frac{2^{-l} H_\alpha(t)}{24\rx}.
\end{align*}
Starting with $r=R$, do the following iteration.
\begin{itemize}
\item For each $n$, let $k_n$ be the smallest number in $K_\alpha\cap
  [r+1,\infty)$ that has the same parity as $n$; $k_n$ exists because by
  Lemma \ref{l:pair}, $K_\alpha\cap \{r+1,\ldots, r+2L_\alpha\}$
  contains an even number and an odd number.  In particular, $1\le k_n
  - r \le 2 L_\alpha$.  Define 
  \begin{align*}
    S''_r = S_r \cup\bigcup^r_{n=1} \{(k,n): r<k<k_n\}
  \end{align*}
  and $S'_r = S''_r \cup \{(k, r+1): (k,r)\in S''_r, (-1)^{k+r}
  \sin(\pi k\da) > 0\}$.  If
  \begin{align*}
    \sum_{(k,n)\in S'_r} (-1)^{k+n} \sin(\pi k\da) M_{k,n}
    > F_l(t),
  \end{align*}
  then let $\Lambda_{l+1}(t) = S'_r$ and stop.  Otherwise increase $r$
  by 1 and repeat.
\end{itemize}

Since $h_1(x,t) - F_l(t)>0$ and $\sum_{(k,n)\in S'_r} (-1)^{k+n}
\sin(\pi k\da) M_{k,n}\to h_1(x,t)$ as $r\toi$, the iteration
eventually will stop.  It is clear that
$\Lambda_{l+1}(t) = S_r \supset S_R \supset \Lambda_l(t)$ and
$F_{l+1}(t) > F_l(t)$.  Next,
\begin{align*}
  h_1(x,t) - F_{l+1}(t)
  &=
  \sum_{(k,n)\not\in S'_r} (-1)^{k+n} \sin(\pi k\da) M_{k,n}
  \le
  \sum_{(k,n)\not\in S_r} M_{k,n}
  \le 
  \sum_{(k,n)\not\in S_R} M_{k,n}.
\end{align*}
Since $R > (2u/\rx)^{1/(\alpha-1)}\vee(2 s/\rx)^{\alpha/(\alpha-1)}$,
by Lemma \ref{l:sum-bound},
\begin{align*}
  \sum_{(k,n)\not\in S_R} M_{k,n}
  &=
  \sum^\infty_{i,j=0,i+j\ge 1} M_{R+i,R+j}
  + \sum^{R-1}_{k=1} \sumoi j M_{k,R+j}
  + \sum^{R-1}_{n=1} \sumoi i M_{R+i, n}
  \\
  &\le
  24 \rx M_{R,R} + 2\rx \sum^{R-1}_{k=1} M_{k,R} +
  2\rx \sum^{R-1}_{n=1} M_{R,n} \le 24\rx \sum_{(k,n)\in \partial S_R}
  M_{k,n}.
\end{align*}
By the choice of $R$, the above two displays give $h_1(x,t) -
F_{l+1}(t) < 2^{-l} H_\alpha(t)$.  It only remains to show 
$h_1(x,t) - F_{l+1}(t) > 0$, i.e. $\sum_{(k,n)\not\in S'_r} (-1)^{k+n}
\sin(\pi k\da) M_{k,n}>0$.   It can be seen that $(\Nats\times
\Nats) \setminus S'_r$ can be partitioned into  the following sets:
\begin{align*}
  E_1
  &=
  \{(k,n): k<k_r, (-1)^{k+r}\sin(k\pi\da)\le 0, n\ge r+1\},
  \\
  E_2
  &=
  \{(k,n): k<k_r, (-1)^{k+r} \sin(k\pi\da)>0, n\ge r+2\},
  \\
  E_3
  &=
  \{(k,n): k\ge k_n, n\le r-1\},
  \\
  E_4
  &=
  \{(k,n): k\ge k_r, n\ge r\}.
\end{align*}
As already seen, $1\le k_r - r\le 2 L_\alpha$.  Then 
$k_r\da \le r+1$ and $r+2\le (2-\ra) k_r$, the first one due to
$r - k_r\da \ge (1 - \ra) r - 2 L_\alpha\da \ge (1-\ra) R -
2L_\alpha\da > 0$ and the second one $(2-\ra)k_r - r -2 \ge
(2-\ra)(r+1) - r - 2\ge (1-\ra) R-\ra\ge 0$.
Also, $r> (2u/\rx)^{1/(\alpha-1)}$ and $k_r >
(2s/\rx)^{\alpha/(\alpha - 1)}$.  Then by \eqref{e:S2} in Lemma
\ref{l:sum-bound}, for every $k<k_r$, $\sumoi j M_{k, r+1+j} \le 2\rx
M_{k, r+1}$, giving
\begin{align*}
  \sum_{n:(k,n)\in E_1} (-1)^{k+n} \sin(k\pi\da) M_{k,n}
  &= 
  (-1)^{k+r+1} \sin(k\pi\da)
  \Sbr{M_{k,r+1} - \sum_{n\ge r+2} (-1)^{n-r-1} M_{k,n}}
  \\
  &\ge 
  |\sin(k\pi\da)|(1-2\rx) M_{k,r+1}\ge 0.
\end{align*}
Since $k_r\ge 2L_\alpha+1$, by Lemma \ref{l:pair}, $(-1)^{k+r+1}
\sin(k\pi\da)>0$ for at least one $k<k_r$.  Thus the sum over $E_1$ is
strictly positive.  Likewise, the sum  over $E_2$ is strictly
positive.  Next, the sum over $E_3$ is at least
\begin{align*}
  \sum^{r-1}_{n=1} \Grp{
    (-1)^{k_n+n} \sin(k_n\pi\da) M_{k_n, n}
    - \sumoi j M_{k_n,n+j}
  }
  \ge 
  \sum^{r-1}_{n=1} \Grp{
    \delta_0 M_{k_n, n}
    - \sumoi j M_{k_n,n+j}
  }.
\end{align*}
By \eqref{e:S2} in Lemma \ref{l:sum-bound}, the last sum is strictly
positive.  Similar, using \eqref{e:S1} in Lemma \ref{l:sum-bound}, the
sum over $E_4$ is strictly positive.  Thus $h_1(x,t) - F_{l+1}(t)>0$,
as desired.

\subsection{Proof of Lemmas}
\begin{proof}[Proof of Lemma \ref{l:h-dominate}]
  Given $x\in [0,1)$, by \eqref{e:pre-passage}, \eqref{e:pdf-g},
  and $g_t$ being decreasing on $[0,\infty)$ (\cite {sato:99:cup},
  p.~416), 
  \begin{align*}
    h_1(x,t) \le g_t(x) \le g_t(0)
    =
    t^{-\ra}/(\alpha \Gamma(1-\ra)), \quad t>0.
  \end{align*}
  On the other hand, for $t\ge 1$, from \eqref{e:h-series-bound}, 
  \begin{align*}
    h_1(x,t)
    \le
    t^{-\ra-1}
    \sumoi{k,n} \frac{\Gamma(k\da+n)}{k!\Gamma(\alpha n)}
    \le
    B t^{-\ra-1}, 
  \end{align*}
  where $B= \intzi E_{\alpha,\alpha}(s) e^{s^\ra-s}\,\dd s$.
  By $E_{\alpha,\alpha}(s) = \sumoi n \lfrac{s^{n-1}}{\Gamma(\alpha
    n)} \le D\sumoi n \lfrac{(s/2)^{n-1}} {\Gamma(n)} = D e^{s/2}$, $B
  \le D\intzi e^{s^\ra-s/2} \,\dd s \le
  D(\int^{\theta^\alpha}_0 e^{s^\ra}\,\dd s +
  \int^\infty_{\theta^\alpha} e^{-s/4}\,\dd s) \le D(\theta^\alpha
  e^\theta + 4)$, which together with the displays yields the proof.
\end{proof}

To prove Lemma \ref{l:sum-bound}, we need the following.
\begin{lemma} \label{l:Gautschi}
  Let $k$ and $n\in\Nats$, and $s$, $u>0$.
  \begin{thmenum}
  \item \label{i:G1}
    If $n \ge k\da$, then $2 u m_{k,n}(s,u) / (n-1)^{\alpha-1}>
    m_{k,n+1}(s,u)$. 
  \item \label{i:G2}
    If $n\le (2-\ra)(k+1)$, then $2 sm_{k,n}(s,u)/(k+1)^{1 -
      \ra}  > m_{k+1,n}(s,u)$.
  \item \label{i:G3}
    If $k\da \le n \le (2-\ra)(k+1)$, then $6 s um_{k,n}(s,u) /
    (n-1)^{\alpha - 1} (k+1)^{1-\ra} > m_{k+1,n+1}(s,u)$.
  \end{thmenum}
\end{lemma}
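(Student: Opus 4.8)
The plan is to reduce all three estimates to two elementary Gautschi-type bounds on ratios of Gamma values, both consequences of the log-convexity of $\Gamma$ on $(0,\infty)$: for $\beta\in(0,1)$ and $x>1-\beta$,
\begin{align*}
  (x+\beta-1)^\beta \le \frac{\Gamma(x+\beta)}{\Gamma(x)} \le x^\beta.
\end{align*}
The upper bound follows by writing $x+\beta=(1-\beta)x+\beta(x+1)$, applying convexity of $\log\Gamma$, and using $\Gamma(x+1)=x\Gamma(x)$; the lower bound follows by writing $x=\beta(x+\beta-1)+(1-\beta)(x+\beta)$, applying convexity, and using $\Gamma(x+\beta)=(x+\beta-1)\Gamma(x+\beta-1)$. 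Throughout, when $n=1$ the claimed right-hand side has the factor $(n-1)^{\alpha-1}=0$ in the denominator, hence equals $+\infty$, so one may assume $n\ge2$.

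For part~\ref{i:G1}, note $m_{k,n+1}(s,u)/m_{k,n}(s,u)=u(k\da+n)\,\Gamma(\alpha n)/\Gamma(\alpha n+\alpha)$. Using $\Gamma(\alpha n+\alpha)=(\alpha n+\alpha-1)\,\Gamma(\alpha n+\alpha-1)$ and the lower bound with $x=\alpha n$, $\beta=\alpha-1$ (legitimate since $\alpha n+\alpha-2=\alpha(n+1)-2>0$), one gets $\Gamma(\alpha n+\alpha)/\Gamma(\alpha n)\ge(\alpha n+\alpha-1)(\alpha n+\alpha-2)^{\alpha-1}>n(n-1)^{\alpha-1}$, the last step using $\alpha n+\alpha-1-n=(\alpha-1)(n+1)>0$ and likewise $\alpha n+\alpha-2>n-1$. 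Since $n\ge k\da$ forces $k\da+n\le2n$, this yields $m_{k,n+1}/m_{k,n}<2u/(n-1)^{\alpha-1}$.

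For part~\ref{i:G2}, $m_{k+1,n}(s,u)/m_{k,n}(s,u)=\frac{s}{k+1}\cdot\frac{\Gamma(k\da+n+\ra)}{\Gamma(k\da+n)}$, which by the upper bound with $x=k\da+n$, $\beta=\ra$ is at most $\frac{s}{k+1}(k\da+n)^\ra$; the hypothesis $n\le(2-\ra)(k+1)$ gives $k\da+n\le2(k+1)-\ra<2(k+1)$, so $(k\da+n)^\ra<2^\ra(k+1)^\ra<2(k+1)^\ra$ and hence $m_{k+1,n}/m_{k,n}<2s/(k+1)^{1-\ra}$. Part~\ref{i:G3} then follows by factoring $m_{k+1,n+1}/m_{k,n}=(m_{k+1,n+1}/m_{k+1,n})(m_{k+1,n}/m_{k,n})$: the second factor is $<2s/(k+1)^{1-\ra}$ by part~\ref{i:G2} (which only invokes $n\le(2-\ra)(k+1)$), while rerunning the part~\ref{i:G1} computation at index $(k+1,n)$ bounds the first factor by $u((k+1)\da+n)\,\Gamma(\alpha n)/\Gamma(\alpha n+\alpha)<u((k+1)\da+n)/(n(n-1)^{\alpha-1})$, and $n\ge k\da$ gives $(k+1)\da+n=k\da+\ra+n\le2n+1\le3n$, so the first factor is $<3u/(n-1)^{\alpha-1}$; multiplying gives the stated $6su/\bigl((n-1)^{\alpha-1}(k+1)^{1-\ra}\bigr)$.

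I expect no genuine obstacle here; the work lies entirely in choosing Gamma-ratio bounds that are just sharp enough and in tracking additive constants (e.g.\ verifying $\alpha n+\alpha-2>n-1$ for all $n\ge1$, and the $n=1$ degeneracy). The one mild subtlety is that part~\ref{i:G3} is not a verbatim product of parts~\ref{i:G1} and~\ref{i:G2}: $n\ge k\da$ does not imply $n\ge(k+1)\da$, so the part~\ref{i:G1} estimate must be rerun at $(k+1,n)$, which costs a factor and turns the constant $2$ into $3$, producing $6=2\cdot3$ at the end.
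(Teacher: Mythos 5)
Your proof is correct and rests on the same ingredients as the paper's: reduce each ratio $m_{k,n+1}/m_{k,n}$, $m_{k+1,n}/m_{k,n}$ to a ratio of Gamma values, apply a Gautschi-type bound, and then use the hypotheses $n\ge k/\alpha$ and $n\le(2-1/\alpha)(k+1)$ to absorb the argument of the power. You derive the Gamma-ratio bounds from log-convexity (yielding the slightly sharper Wendel upper bound $\Gamma(x+\beta)/\Gamma(x)\le x^\beta$) instead of citing Gautschi's inequality from NIST, but the endpoints you extract, $\Gamma(\alpha n+\alpha)/\Gamma(\alpha n)>n(n-1)^{\alpha-1}$ and $\Gamma((k+1)/\alpha+n)/\Gamma(k/\alpha+n)<2(k+1)^{1/\alpha}$, are the same ones the paper uses. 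The one genuine difference is in part (c): you factor $m_{k+1,n+1}/m_{k,n}$ through the lattice point $(k+1,n)$, while the paper factors through $(k,n+1)$. These are mirror images, and neither is a verbatim product of (a) and (b): your route cannot invoke (a) directly because $n\ge k/\alpha$ does not give $n\ge(k+1)/\alpha$, so you rerun the (a) estimate with $(k+1)/\alpha+n\le 3n$ and pick up a factor $3$; the paper cannot invoke (b) directly because $n\le(2-1/\alpha)(k+1)$ does not give $n+1\le(2-1/\alpha)(k+1)$, so it reruns the (b) estimate with $(k+1)/\alpha+n+1\le 3(k+1)$ and picks up its factor $3$ there. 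Either way the product of constants is $6=2\cdot 3$, and you correctly flag this subtlety, which the paper leaves implicit.
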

\begin{proof}
  \ref{i:G1} For $k\ge 1$ and $n\ge k\da$,
  \begin{align*}
    \frac{m_{k,n}(s,u)}{m_{k,n+1}(s,u)}
    =
    \frac{u^{-1}\Gamma(\alpha n + \alpha)}
    {\Gamma(\alpha n)(k\da + n)}
    \ge
    \frac{u^{-1}}{2 n} \frac{\Gamma(\alpha n + \alpha)}
    {\Gamma(\alpha n)}.
  \end{align*}
  By Gautschi's inequality (\cite{NIST:10}, p.~138),
  $\lfrac{\Gamma(\alpha n + \alpha)}{\Gamma(\alpha n)} > (\alpha n +
  \alpha - 1) (\alpha n + \alpha - 2)^{\alpha - 1} > n (n
  -1)^{\alpha-1}$,  which together with the display yields the proof.

  \ref{i:G2}   By Gautschi's inequality, $\Gamma((k+1)\da + n) <
  \Gamma(k \da + n) ((k+1)\da + n)^\ra$.  
  Then 
  \begin{align*}
    \frac{m_{k,n}(s,u)}{m_{k+1,n}(s,u)}
    =
    \frac{s^{-1}\Gamma(k\da + n) (k+1)}{\Gamma((k+1)\da + n)}
    >
    \frac{s^{-1}(k+1)}{((k+1)\da + n)^\ra}.
  \end{align*}
  If $n\le (2-\ra)(k+1)$, then $((k+1)\da+n)^\ra \le
  [2(k+1)]^\ra<2(k+1)^\ra$, leading to the proof.

  \ref{i:G3} From the above argument, if $n\ge k\da$, then
  \begin{align*}
    \frac{m_{k,n}(s,u)}{m_{k+1,n+1}(s,u)}
    =
    \frac{m_{k,n}(s,u)}{m_{k,n+1}(s,u)}
    \frac{m_{k,n+1}(s,u)}{m_{k+1,n+1}(s,u)}
    \ge
    \frac{(su)^{-1}(n-1)^{\alpha-1}}{2} \frac{(k+1)}{((k+1)\da +
      n+1)^\ra}.
  \end{align*}
  Then for $n\le (2-\ra)(k+1)$, $(k+1)\da + n+1 \le
  2(k+1)+1\le 3(k+1)$, so
  \begin{align*}
    \frac{(k+1)}{((k+1)\da +   n+1)^\ra}
    \ge
    \frac{(k+1)}{(3(k+1))^\ra},
  \end{align*}
  which together with the previous display yields the proof.
\end{proof}

\begin{proof}[Proof of Lemma \ref{l:sum-bound}]
  Write $m_{k,n} = m_{k,n}(s,u)$ and $S_{k,n} =
  \sumzi{i,j} m_{k+i, n+j}$.  Then \eqref{e:S1} is equivalent to
  $S_{k,n} \le (1+24\rx) m_{k,n}$ for $k$, $n$ satisfying the 
  conditions in the lemma.  Let $k_0=k$ and for $l\ge 1$, $k_l =
  \Flr{\alpha (n+l-1)+1}$.  Then by $\alpha \in (1,2)$, $k_0 < k_1 <
  k_2 < \ldots$ and $k_l\da\le (k_{l+1}-1)\da
    \le n + l \le (2-\ra) k_l$ for $l\ge 0$.  Put $d_l = k_{l+1}
    - k_l$.  Then
  \begin{align*}
    S_{k,n}
    &=
    \sumzi l \Grp{\sum^{d_l - 1}_{i=0} \sumoi j m_{k_l+i,
        n+l + j} + \sumzi i m_{k_l + i, n+l}
    }.
  \end{align*}
  For $0\le i<d_l$, and $j\ge 1$, since $n+l+j-1\ge  n+l\ge (k_{l+1} -
  1)\da \ge (k_l +  i)\da$, by Lemma \ref{l:Gautschi}\ref{i:G1},
  $m_{k_l+i, n+l+j}/m_{k_l+i, n+l+j-1} \le 2u (n+l+j-2)^{1-\alpha}
  \le 2u(n-1)^{1-\alpha} < \rx$.  Then by induction,
  \begin{align*}
    \sum^{d_l - 1}_{i=0} \sumoi j m_{k_l+i, n+l + j}
    <
    \sum^{d_l- 1}_{i=0} \sumoi j \rx^j m_{k_l+i, n+l}
    \le 
    \frac{\rx}{1-\rx} \sumzi i m_{k_l + i,  n+l}
  \end{align*}
  and hence
  \begin{align*}
    S_{k,n} < \nth{1-\rx} \sumzi l \sumzi i m_{k_l + i, n+l}.
  \end{align*}
  For each $i\ge 1$, since $n+l\le (2-\ra) (k_l + i)$, by  Lemma
  \ref{l:Gautschi}\ref{i:G2}, $m_{k_l+i,  n+l}/m_{k_l+i-1, n+l}\le 2s
  (k_l+i)^{\ra - 1} < 2 s k^{\ra - 1} < \rx$.  Then by induction,
  $m_{k_l+i, n+l} \le \rx^i m_{k_l, n+l}$, resulting in
  \begin{align*}
    S_{k,n}< \nth{(1-\rx)^2} \sumzi l m_{k_l, n+l}.
  \end{align*}
  For each $l\ge 1$, since $(k_l-1)\da \le n+l-1 \le (2 -
  \ra)(k_l-1)$, by Lemma \ref{l:Gautschi}\ref{i:G3} $m_{k_l,
    n+l}/ m_{k_{l-1}, n+l-1}< 6su(n-1)^{1-\alpha} k^{\ra-1} <
  \rx$.  Then by induction, $S_{k,n}< (1-\rx)^{-3} m_{k,n} <
  (1+24\rx) m_{k,n}$, as desired.  The proof for \eqref{e:S2} and
  \eqref{e:S3} is very similar to that for \eqref{e:S1} and hence is
  omitted.
\end{proof}

\begin{proof}[Proof of Lemma \ref{l:pair}]
  Recall that $I_j$ is defined to be $[(2j+d_\alpha)\alpha,
  (2j+1-d_\alpha)\alpha]$.  Then $|I_j|\ge 1$.  Let $B_j =
  ((2j+1-d_\alpha) \alpha, (2j+2+d_\alpha)\alpha)$.  Then $|B_j|\le2$
  and $A^c_\alpha = \cup_{j\in\Ints} B_j$.  If two consecutive
  integers both belong to $A^c_\alpha$, they must
  belong to the same $B_j$, for otherwise there would be an $I_i$
  strictly between the two, implying $|I_i|<1$.  Moreover, no three
  consecutive integers can all belong to $A^c_\alpha$, for otherwise
  they had to be in the same $B_j$, implying $|B_j|>2$.   Assume that
  for some $i$, none of the even numbers in $S = \{i+1, i+2, \ldots,
  i+2L_\alpha\}$ is in $K_\alpha$.  Then all the odd numbers in $S$
  are in $K_\alpha$.  Consequently, the even numbers belong to
  $L_\alpha$ different $B_j$'s, and the odd ones to $L_\alpha$
  different $I_j$'s.  The union of these intervals  has Lebesgue
  measure $2\alpha L_\alpha$.  Since the union lies between $i+1-|C|$
  and $i+2L_\alpha+|D|$, where $C$ is the interval containing $i+1$
  and $D$ the one containing $i+2L_\alpha$, then $2\alpha L_\alpha \le
  L_\alpha-1+|C| + |D|$.  Observe that either $C$ is an $I_j$ and $D$
  is a $B_l$, or vice versa.  Then $|C|+|D| = 2\alpha$, so $2\alpha
  L_\alpha \le 2L_\alpha -1 +  2\alpha$, contradicting the choice for
  $L_\alpha$.  This shows there is at least one even number in $S$
  belonging to $K_\alpha$.  Likewise, there is at least one odd number
  in $S$ belonging to $K_\alpha$.
\end{proof}

\bibliographystyle{acmtrans-ims}

\section*{Appendix}
\begin{proof}[On the connection between \eqref{e:h-diff} and
  \eqref{e:Brownian}] 
  When $\alpha=2$, $\sin(\pi k\da)$ is 0 if $k$ is even and is
  $(-1)^j$ is $k = 2j + 1$ for integer $j\ge 0$.  Then
  the series in \eqref{e:h-diff} can be written as
  \begin{align*}
    \nth{\pi}
    \sumoi {j=0,n} \frac{\Gamma(j+1/2+n)}{(2j+1)!(2n-1)!}
    (-1)^{j+n+1} [(2j+1)c + (2n-1)(c-x)](c-x)^{2j} c^{2n-2}.
  \end{align*}
  Write $n=l+1$ and $m=j+l$.  Then the series becomes
  \begin{align*}
    &
    \nth{\pi}
    \sumzi {j,l} \frac{\Gamma(j + l+3/2)}{(2j+1)!(2l+1)!}
    (-1)^{j+l} [(2j+1)(c-x)^{2j} c^{2l+1} + (2l+1) (c-x)^{2j+1}
    c^{2l}]
    \\
    &=
    \nth{\pi}
    \sumzi m \frac{\Gamma(m+3/2)}{(2m+1)!} (-1)^m 
    \sum^m_{j=0}\Sbr{
      \frac{(c-x)^{2j} c^{2m-2j+1}}{(2j)!(2m-2j+1)!}
      +
      \frac{(c-x)^{2j+1} c^{2m-2j}}{(2j+1)!(2m-2j)!}
    }
    \\
    &=
    \nth{\pi}
    \sumzi m \frac{\Gamma(m+3/2)}{(2m+1)!} (-1)^m 
    \sum^{2m+1}_{s=0} \frac{(2m+1)!}{s!(2m+1-s)!} (c-x)^s c^{2m+1-s}
    \\
    &=
    \nth{\pi}
    \sumzi m \frac{\sqrt\pi}{2^{2m+1} m!}(-1)^m 
    (2c-x)^{2m+1}
    \\
    &=
    \frac{2c-x}{2\sqrt{\pi}}\exp\Cbr{-\frac{(2c-x)^2}{4}}.
  \end{align*}
  Since $(X_t)_{t\ge 0}\sim (W_{2t})_{t\ge 0}$, this is
  essentially the same result as \eqref{e:Brownian}.
\end{proof}

\begin{proof}[Proof of Eq.~\eqref{e:ladder}]
  We need the following refined version of Lemma
  \ref{l:hc}\ref{i:hc-x}.
  \begin{lemma} \label{l:hc2}
    There is a constant $M>0$, such that for all $0<x<1/2$ and all
    $t>0$,
    \begin{align*}
      h_1(1-x,t) \le M x(t+t^{1-\ra}).
    \end{align*}
  \end{lemma}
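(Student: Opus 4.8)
The plan is to bound $h_1(1-x,t)$ separately in the two regimes $t\ge 1$ and $0<t\le 1$; the series \eqref{e:h-power} is tractable by a termwise estimate only when $t$ is bounded below. For $t\ge 1$ I keep the crude bound: taking absolute values in \eqref{e:h-power} (with $x$ there replaced by $1-x$) gives $0\le h_1(1-x,t)\le \pi^{-1}\sumoi{k,n}\frac{\Gamma(k/\alpha+n)}{\Gamma(\alpha n)\,k!}\,x^k t^{-k/\alpha-n}$. Writing $\Gamma(k/\alpha+n)=\intzi v^{k/\alpha+n-1}e^{-v}\,\dd v$ and summing (all terms are $\ge 0$, so Tonelli applies), then substituting $v=tw$ and using $\sumoi n z^n/\Gamma(\alpha n)=zE_{\alpha,\alpha}(z)$, this right-hand side equals $\pi^{-1}\intzi e^{-tw}\bigl(e^{xw^{\ra}}-1\bigr)E_{\alpha,\alpha}(w)\,\dd w$. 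Since $x<1/2$, $e^{xw^{\ra}}-1\le xw^{\ra}e^{xw^{\ra}}\le xw^{\ra}e^{w^{\ra}/2}$; using $E_{\alpha,\alpha}(w)\le De^{w/2}$ with $D$ as in Lemma \ref{l:h-dominate} and $e^{-tw}\le e^{-w}$ (valid because $t\ge 1$) yields $h_1(1-x,t)\le B'x$ with $B'=\pi^{-1}D\intzi w^{\ra}e^{w^{\ra}/2-w/2}\,\dd w<\infty$ (finite since $w^{\ra}/2-w/2\to-\infty$). As $t+t^{1-\ra}\ge t\ge 1$, this gives $h_1(1-x,t)\le B'x(t+t^{1-\ra})$.

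For $0<t\le 1$ I use the representation \eqref{e:pre-passage2}, $h_1(1-x,t)=g_t(1-x)-\int_0^t f_{-x}(s)g_{t-s}(1)\,\dd s$, together with the fact that by \eqref{e:T_x} $\fht_{-x}$ is a.s.\ finite with $\fht_{-x}\sim x^\alpha\fht_{-1}$, so $\intzi f_{-x}=1$ and $1-\int_0^t f_{-x}=\pr{\fht_{-x}>t}$. Substituting this in and regrouping,
\[
  h_1(1-x,t)=\bigl[g_t(1-x)-g_t(1)\bigr]+g_t(1)\,\pr{\fht_{-x}>t}+\int_0^t f_{-x}(s)\bigl[g_t(1)-g_{t-s}(1)\bigr]\,\dd s,
\]
and I bound each term (the first two are nonnegative, the third I take in absolute value). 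For the first: since $g_t(y)=t^{-\ra}g_1(t^{-\ra}y)$ and, by the standard stable-density asymptotics, $g_1(z)=O(z^{-1-\alpha})$ and $g_1'(z)=O(z^{-2-\alpha})$ as $z\to\infty$, and because $1-x\ge 1/2$ is bounded away from $0$, one gets $|g_t'(y)|\le Ct$ on $[1/2,1]$ (the power of $t$ being $1$ because $\alpha\ra=1$), hence $|g_t(1-x)-g_t(1)|\le x\sup_{y\in[1/2,1]}|g_t'(y)|\le Cxt$.

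For the second term: $\pr{\fht_{-x}>t}=\pr{\fht_{-1}>tx^{-\alpha}}$, and since $\fht_{-1}$ is positive $\ra$-stable its tail is $O(y^{-\ra})$, so $\pr{\fht_{-x}>t}\le C(tx^{-\alpha})^{-\ra}=Cxt^{-\ra}$ (again using $\alpha\ra=1$); with $g_t(1)=t^{-\ra}g_1(t^{-\ra})\le Ct$ for $t\le1$, this term is $\le Cxt^{1-\ra}$. For the third term: $\sigma\mapsto g_\sigma(1)$ is $C^1$ with $\partial_\sigma g_\sigma(1)=-\ra\sigma^{-\ra-1}\bigl[g_1(\sigma^{-\ra})+\sigma^{-\ra}g_1'(\sigma^{-\ra})\bigr]$, which is bounded on $(0,1]$ by the same asymptotics, so $|g_t(1)-g_{t-s}(1)|\le Cs$ for $0<s<t\le1$; hence the third term is $\le C\int_0^t s f_{-x}(s)\,\dd s$, and by $f_{-x}(s)=x^{-\alpha}f_{-1}(x^{-\alpha}s)$ this equals $Cx^\alpha\int_0^{tx^{-\alpha}}\sigma f_{-1}(\sigma)\,\dd\sigma\le Cx^\alpha(tx^{-\alpha})^{1-\ra}=Cxt^{1-\ra}$, where I use that $f_{-1}$ is a probability density with $f_{-1}(\sigma)=O(\sigma^{-1-\ra})$ as $\sigma\to\infty$ (so $\int_0^Y\sigma f_{-1}\le C_1Y^{1-\ra}$ for every $Y>0$) and $\alpha\ra=1$. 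Since $t\le t^{1-\ra}$ for $t\le1$, summing the three bounds gives $h_1(1-x,t)\le Cxt^{1-\ra}\le Cx(t+t^{1-\ra})$; taking $M$ to be the larger of the constants from the two regimes finishes the proof.

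The main obstacle is precisely the small-$t$ regime: one must avoid estimating \eqref{e:h-power} termwise there, since the absolute-value bound blows up like $\exp(c\,t^{-1/(\alpha-1)})$ as $t\to0$ (the oscillation of the series masks the true size), and instead extract the linear-in-$x$ factor from the three error terms of the probabilistic decomposition above; that all three of these scale like $xt^{1-\ra}$ rather than worse is exactly the arithmetic identity $\alpha\ra=1$, i.e.\ the matching of the two stable scaling exponents.
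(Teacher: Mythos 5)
Your proof is correct, but it takes a slightly different route from the paper's, and the difference is worth noting. The paper's proof also starts from \eqref{e:pre-passage2}, but it applies a single integration by parts to get
\[
h_1(1-x,t)=g_t(1-x)-g_t(1)+\int^t_0 \rsup F_{-x}(t-s)\,\frac{\partial g_s(1)}{\partial s}\,\dd s,\qquad \rsup F_{-x}(t)=\pr{\fht_{-x}>t},
\]
and then bounds the two pieces directly: the first by the mean-value theorem together with $\sup_{y>0}y^{\alpha+2}|g_1'(y)|<\infty$ (giving $O(xt)$), and the second by boundedness of $\partial_s g_s(1)$ on all of $(0,\infty)$ combined with $\int_0^y\rsup F_{-1}(s)\,\dd s\le M_3\,y^{1-\ra}$ (giving $O(x t^{1-\ra})$). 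Crucially, both of those bounds are uniform over $t>0$, so the paper needs no regime split. Your decomposition into three terms ($[g_t(1-x)-g_t(1)]$, $g_t(1)\pr{\fht_{-x}>t}$, and $\int_0^t f_{-x}(s)[g_t(1)-g_{t-s}(1)]\,\dd s$) is exactly what the paper's two-term version becomes after undoing the integration by parts, and the tail/scaling inputs you invoke are identical; your second and third bounds together are the paper's single $M_2M_3\,x t^{1-\ra}$ estimate. The one genuine divergence is the extra regime $t\ge1$, where you fall back on the termwise-absolute-value bound of the double series via the Gamma-integral trick; that is correct and self-contained, but it is unnecessary — the probabilistic decomposition already handles large $t$ for free (indeed $\partial_s g_s(1)\to 0$ as $s\to\infty$). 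Your closing observation about why the termwise bound is unusable near $t=0$ (blow-up like $\exp(c\,t^{-1/(\alpha-1)})$) is accurate and well diagnosed; the paper sidesteps that obstacle the same way you do, just without ever touching the series.
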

  Assume the lemma is true for now.  Then given $c>0$, by scaling,
  for all $0<x<c/2$, $h_c(c-x, t)\le M x(t+t^{1-\ra})$ for some $M
  = M(c)>0$.  Then by Lemma \ref{l:hc}\ref{i:hc-x} and dominated
  convergence, for each $q>0$,
  \begin{align*}
    \intzi m(c,t) e^{-q t}\,\dd t
    =
    \lim_{x\to 0} \nth x\intzi h_c(c-x,t) e^{-q t}\,\dd t.
  \end{align*}
  However, by scaling \eqref{e:h-scaling} and Proposition \ref{p:LT},
  for $0<x<c/2$,
  \begin{align*}
    \nth x\intzi h_c(c-x,t) e^{-q t}\,\dd t
    =
    \frac{e^{-q^\ra x}-1}{x}
    \sumoi n \frac{q^{n-1} c^{\alpha n-1}}{\Gamma(\alpha n)} +
    \sumoi n \frac{q^{n-1} [c^{\alpha n - 1} -(c-x)^{\alpha
    n-1}]}{x\Gamma(\alpha n)}.
  \end{align*}
  As a result,
  \begin{align*}
    \intzi m(c,t) e^{-q t}\,\dd t
    =
    \sumoi n \frac{q^{n-1} c^{\alpha n - 2}}{\Gamma(\alpha n-1)} - 
    q^{\ra}
    \sumoi n \frac{q^{n-1} c^{\alpha n-1}}{\Gamma(\alpha n)}.
  \end{align*}
  Provided that $\beta > q^\ra$, integration term by term of the \rhs
  yields
  \begin{align*}
    \intzi \Grp{\intzi m(c,t) e^{-q t}\,\dd t} e^{-\beta c}\,\dd c
    =
    \sumoi n\frac{q^{n-1}}{\beta^{\alpha n-1}}
    -
    q^\ra \sumoi n\frac{q^{n-1}}{\beta^{\alpha n}}
    = \frac{\beta - q^\ra}{\beta^\alpha - q}.
  \end{align*}
  By analytic extension, the equality still holds for $0\le \beta\le
  q^\ra$.   Then by \eqref{e:ladder0} the proof is complete.
\end{proof}

\begin{proof}[Proof of Lemma \ref{l:hc2}]
  By \eqref{e:pre-passage2} and integral by parts,
  \begin{align} \label{e:h1-ibp}
    h_1(1-x,t)
    =
    g_t(1-x) - g_t(1) + \int^t_0 \rsup F_{-x}(t-s) \frac{\partial
      g_s(1)}{\partial s}\,\dd s,
  \end{align}
  where $\rsup F_{-x}(t) = \int^\infty_t f_{-x}(s)\,\dd s =
  \pr{\fht_{-x} > t}$.  For $0<x<1/2$, $g_t(1-x) - g_t(1) = - g'_t(z)
  x$ for some $z\in (1-x,1)$.  Clearly $z>1/2$.  It is not hard to
  show that $M_1 := \sup_{y>0} [y^{\alpha + 2} |g'_1(y)|]<\infty$
  (\cite{sato:99:cup}, p.~88).  On the other hand, by $g_t(z) =
  t^{-\ra} g_1(t^{-\ra} z)$, $g'_t(z) = t^{-2\da} g'_1(t^{-\ra} z)$.
  Then
  \begin{align} \label{e:h1-ibp1}
    |g_t(1-x) - g_t(1)| = x t^{-2\da} |g'_1(t^{-\ra} z)|
    \le x t^{-2\da} M_1 (t^{-\ra} z)^{-\alpha -2} \le M_1 2^{\alpha +
    2} x t.
  \end{align}
  Next, by $g_s(1) = s^{-\ra} g_1(s^{-\ra})$, $|\partial
  g_s(1)/\partial s| \le (\ra) [s^{-\ra-1} g_1(s^{-\ra}) + s^{-2\da-1}
  |g'_1(s^{-\ra})|]$ is bounded.  Then for some $M_2>0$,
  \begin{align*}
    \Abs{\int^t_0 \rsup F_{-x}(t-s) \frac{\partial
      g_s(1)}{\partial s}\,\dd s
    }
    \le
    M_2 \int^t_0 \rsup F_{-x}(s)\,\dd s
    =
    M_2 x^\alpha \int^{x^{-\alpha} t}_0 \rsup F_{-1}(s) \,\dd s,
  \end{align*}
  where the equality is due to $\rsup F_{-x}(s) = \rsup
  F_{-1}(x^{-\alpha} s)$ and change of variable.  Because $\rsup
  F_{-1}(s)$ is decreasing with $\rsup F_{-1}(0)=1$ and is slowly
  varying at $\infty$ with index $-\ra$, there is a constant $M_3>0$
  such that $\int^y_0 \rsup F_{-1}(s)\,\dd s \le M_3 y^{1-\ra}$ for
  all $y>0$.  It follows that
  \begin{align} \label{e:h1-ibp2}
    \Abs{\int^t_0 \rsup F_{-x}(t-s) \frac{\partial
      g_s(1)}{\partial s}\,\dd s
    }
    \le M_2 M_3 x t^{1-\ra}.
  \end{align}
  Then the proof is complete by combining
  \eqref{e:h1-ibp}--\eqref{e:h1-ibp2}.
\end{proof}

\begin{proof}[Proof of Eq.~\eqref{e:coexcessive}]
  Denote the \rhs of \eqref{e:coexcessive} by $v^q(x)$.  The task is
  to show $\dual {v^q} = \dual {u^q}$, where, for example, $\dual
  {v^q}(x) = {v^q}(-x)$.  Since $v^q$ is a version of the
  $q$-resolvent density, according to the proof of Proposition I.13 of
  \cite{bertoin:96:cup},  $(r-q) U^r \dual {v^q}\uto \dual {u^q}$ as
  $r\toi$, where $U^r$ is the $r$-resolvent operator.  For $r>q$,
  \begin{align*}
    U^r \dual {v^q}(x)
    &=
    \intzi e^{-r t} \mean^x[\dual {v^q}(X_t)]\,\dd t
    \\
    &=
    \intzi e^{-r t} \Sbr{\int \Grp{\intzi e^{-q s} g_s(-y)\,\dd s}
      g_t(y-x)\,\dd y}\,\dd t 
    \\
    &=
    \intzi \intzi e^{-r t -q s} g_{s+t}(-x)\,\dd s\,\dd t
    \\
    &=
    (r-q)^{-1} \intzi (1-e^{(q-r) s}) e^{-q s}g_s(-x)\,\dd s.
  \end{align*}
  Then by monotone convergence, $(r-q) U^r \dual {v^q}(x)\to \dual
  {v^q}(x)$, giving $\dual {v^q}(x) = \dual {u^q}(x)$.
\end{proof}

\end{document}